\DeclareMathOperator{\dime}{dim}
\numberwithin{equation}{section}
\newtheorem{example}{Example}[section]
\newtheorem{theo}{Theorem}[section]
\numberwithin{theo}{subsection}
\newtheorem{lemma}[theo]{Lemma}
\begin{document}

\setcounter{page}{1} 
\baselineskip .65cm 
\pagenumbering{arabic}

\title{Classification of right nilpotent $\mathbb{F}_p$-braces of cardinality $p^5$.}
\keywords{Yang-Baxter equation, left $\mathbb{F}_p$-braces, Nilpotent braces, Pre-Lie algebra. }
\subjclass[2020]{16T25, 17B30, 81R50}

\author[Snehashis Mukherjee]{
Snehashis Mukherjee}

\address {\newline Snehashis Mukherjee
\newline School of Mathematical Sciences, \newline Ramakrishna Mission Vivekananda Educational and Research Institute (rkmveri), \newline Belur Math, Howrah, Box: 711202, West Bengal, India.}
\email{\href{mailto:tutunsnehashis@gmail.com}{tutunsnehashis@gmail.com}}
%\tableofcontents
\begin{abstract} 
In this article the right nilpotent $\mathbb{F}_p$-braces of cardinality $p^5$ has been classified. We use the connection between nilpotent $\mathbb{F}_p$-braces of cardinality $p^5$ and nilpotent pre-Lie algebras of the same order, building on the known relationship between pre-Lie algebras and braces. Leveraging insights from the classification of nilpotent pre-Lie algebras over $\mathbb{F}_p$, we aim to provide a comprehensive classification of right nilpotent $\mathbb{F}_p$-braces of cardinality $p^5$.
\end{abstract}

\maketitle
\section{Introduction}	
Let $X$ be an arbitrary set and $R:X\times X\rightarrow X\times X$ a bijective map. The pair $(X,R)$ is said to be a set theoretic solution of the quantum Yang-Baxter equation if 
\[R_{12}R_{23}R_{12}=R_{23}R_{12}R_{23}\] holds in the set of all the maps from $X\times X\times X$ to itself, $R_{ij}$ is just $R$ acting on the $i$th and $j$th components of $X\times X\times X$ and identity on the remaining one.
\par The study of set-theoretical solutions of quantum Yang-Baxter equation was initiated by Drinfeld \cite{di} and persued by several authors \cite{e1,e2,g1,g2}.
\par  A brace is a triple $(A, +, \circ)$ where $(A, +)$ is an abelian group,
$(A, \circ)$ is a group and
\[a \circ (b + c) + a = a \circ b + a \circ c\]
for all $a, b, c \in A$. We refer to $(A, \circ)$ as the multiplicative group of the brace.
\par In 2007, W. Rump introduced braces as a generalization of Jacobson radical rings to facilitate the examination of solutions to the Yang-Baxter equation \cite{r1}. Subsequently, the study of braces has garnered considerable attention, revealing connections to various mathematical concepts. Noteworthy associations include integral group rings \cite{sy}, Garside groups \cite{ch}, groups with bijective 1-cocycles \cite{c2,do}, quantum groups \cite{do,di}, and trusses \cite{br}, among others.
\par Skew braces, introduced by L. Guarnieri and L. Vendramin in 2016 as an extension of braces, were designed to explore noninvolutive solutions to the Yang-Baxter equation \cite{gu}. Further investigations into skew braces were conducted in \cite{b1,so}. The characterization of skew braces with a cardinality of $p^3$ is provided in \cite{k1}, while a comprehensive description of all non-right nilpotent $\mathbb{F}_p$-braces with a cardinality of $p^4$ is presented in \cite{k2}. Finally, Puljic accomplished the classification of all braces with a cardinality of $p^4$ \cite{pp}.

\par The significance of right nilpotency in braces arises from its correlation with the finite multipermutation level of the associated set-theoretic solutions. The comprehension and exploration of this class of solutions, particularly those involving right nilpotent braces, have been thoroughly studied and well-documented in the literature \cite{b2,c1,c3,g3,g4}.
\par A pre-Lie algebra is a triple $(A, +, \cdot)$ consisting of a $k$-vector
space $A$ with a binary operation $(x, y) \rightarrow x \cdot y$ such that
\[(a \cdot b) \cdot c - a \cdot (b \cdot c) = (b \cdot a) \cdot c - b \cdot (a \cdot c),\]
\[(ia + jb) \cdot c = i(a \cdot c) + j(b \cdot c)\] and \[a \cdot (ib + jc) = i(a \cdot b) + j(a \cdot c)\]
for all $a, b, c \in A$ and $i, j \in k$. We say that a pre-Lie algebra is nilpotent (or strongly nilpotent) if for some
$n \in \mathbb{N}$ any product of $n$ elements is zero. We denote the radical chains in pre-Lie
algebras the same way as in braces, but using the binary operation of the pre-Lie
algebra.
\par
The association between pre-Lie algebras over $\mathbb{R}$ and left nilpotent $\mathbb{R}$-braces was elucidated in \cite{r2}, accompanied by a methodology for constructing a brace from a pre-Lie algebra. This connection underwent further scrutiny in \cite{so2}, where a formula detailing the correspondence between strongly nilpotent $\mathbb{F}_p$-braces and nilpotent pre-Lie algebras over $\mathbb{F}_p$ was presented. Additionally, in \cite{so3}, it was demonstrated that nilpotent pre-Lie rings of cardinality $p^n$ correspond to strongly nilpotent braces of the same cardinality, under the condition $p > n + 1$. These braces share the same additive group as the corresponding pre-Lie algebra and can be explicitly derived from the pre-Lie algebra by constructing the group of flows. Every brace $A$, not necessarily right nilpotent, gives rise to a related pre-Lie ring, associated with the factor brace $A/ann(p^2)$ \cite{sh}, and this construction is reversible \cite{so4}. The question of whether every brace of cardinality $p^n$ corresponds to a pre-Lie ring remains unclear.
\par Consequently, the classification of nilpotent pre-Lie rings of cardinality $p^n$ leads to the classification of strongly nilpotent braces of cardinality $p^n$, for a sufficiently large prime $p$. Skew braces and all non-right nilpotent $\mathbb{F}_p$-braces of cardinality $p^4$ were detailed in \cite{k2}. Furthermore, it was established in \cite{dora} that all braces of cardinality $p^4$, except those constructed in \cite{k2}, are right nilpotent. This implies that the remaining braces of cardinality $p^4$ awaiting classification are right nilpotent, for which there exists a connection to nilpotent pre-Lie rings. The classification of nilpotent pre-Lie algebras of cardinality $p^4$ over the field $\mathbb{F}_p$ generated by a single element has been accomplished in \cite{so2}, leading to the classification of strongly nilpotent $\mathbb{F}_p$-braces generated by a single element of the same cardinality.
\par We say that a left brace $A$ is an $\mathbb{F}_p $-brace if
its additive group is an $\mathbb{F}_p $-vector space such that
\[a*(\alpha b)=\alpha(a*b)\] for all $a,b \in A$ and $\alpha \in \mathbb{F}_p$. Note that an $\mathbb{F}_p$-brace of cardinality $p^5$
is a brace with additive group $C_p^5$. In this paper, we classify all $\mathbb{F}_p$-braces by addressing the nilpotent pre-Lie algebras of cardinality $p^5$. In this paper all braces considered are left braces.
\par \textbf{Arrangement:} The paper is structured as follows: Section $2$ provides preliminaries discussing the established connection between braces and pre-Lie algebras by Smoktunowicz \cite{so3,so4}. Consequently, our focus narrows to the classification of pre-Lie algebras of cardinality $p^5$. In Section $3$, we explore the nilpotency index of pre-Lie algebras of cardinality $p^5$, demonstrating that for any nilpotent pre-Lie algebra $(A,+,\cdot)$ over $\mathbb{F}_p$ with cardinality $p^5$, its nilpotency index is at most $8$. Section $4$ is dedicated to the classification of all potential nilpotent pre-Lie algebras generated by a single element. Sections $5$, $6$, and $7$ extend this classification to nilpotent pre-Lie algebras generated by $2$, $3$, and $4$ elements, respectively.
\section{Preliminaries}
A brace $A$ is left nilpotent if there exists $n \in \mathbb{N}$ such that $A^n = 0$, where
$A^{i+1} = A * A^i$ and $A^1 = A$. A brace is right nilpotent if there exists $n \in \mathbb{N}$
such that $A^{(n)} = 0$, where $A^{(i+1)} = A^{(i)} * A$ and $A^{(1)} = A$. A brace is strongly
nilpotent if there exists $n \in \mathbb{N}$ such that $A^{[n]} = 0$, where $A^{[i+1]}= \sum_{j=1}^{i} A^{[j]} *
A^{[i+1-j]}$ and $A^{[1]} = A$. The smallest such $n$ is called the nilpotency index of $A$.
We say that a pre-Lie ring is nilpotent (or strongly nilpotent) if for some
$n \in \mathbb{N}$ any product of $n$ elements is zero. We denote the radical chains in pre-Lie
rings the same way as in braces, but using the binary operation of the pre-Lie
ring.
\par Braces of cardinality $p^5$ are left nilpotent \cite{r1}. Then the following theorem is useful
\begin{theo}\cite[Theorem 3.1]{so5}
    Let $(A,+,\circ)$ be a left brace. If $m, n $ are natural numbers
and $A^n = A^{(m)} = 0$, then $A^{[s]} = 0$ for some natural number $s$. 
\end{theo}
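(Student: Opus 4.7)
The plan is a combinatorial, Ramsey-type argument applied to the binary trees that represent iterated $*$-products in the strong chain.

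First, I would verify by induction on $s$ that $A^{[s]}$ is spanned (as an additive subgroup of $(A,+)$) by \emph{tree evaluations} $\tau(a_1, \ldots, a_s)$, where $\tau$ is a full binary tree with $s$ leaves labelled by elements of $A$ and each internal node is computed by applying $*$ to the values of its two children. The recursive formula $A^{[i+1]} = \sum_{j=1}^{i} A^{[j]} * A^{[i+1-j]}$ encodes exactly the set of bracketings one obtains from such trees.

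Next, for a node $v$ of $\tau$ define its \emph{right-spine} (resp.\ \emph{left-spine}) to be the maximal sequence of successive right-children (resp.\ left-children) starting from $v$. The key algebraic observation is this: if some node $v$ admits a right-spine $v = v_0, v_1, \ldots, v_{n-1}$ of $n$ vertices, then the subtree value at $v$ takes the nested-left form
\[
\ell_0 * \bigl( \ell_1 * \bigl( \cdots * (\ell_{n-2} * y_{n-1}) \cdots \bigr)\bigr),
\]
where each $\ell_i$ is the value of the left subtree hanging off $v_i$ and $y_{n-1}$ is the value at $v_{n-1}$; all these lie in $A$, so the value at $v$ lies in $A^n = 0$. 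Because $0 * x = x * 0 = 0$ in any brace (since $0$ is the common identity of $(A,+)$ and $(A,\circ)$), a zero at $v$ propagates up through $\tau$ and annihilates $\tau(a_1, \ldots, a_s)$. Symmetrically, a left-spine of $m$ vertices at some node forces the evaluation into $A^{(m)} = 0$.

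Finally, I would invoke the combinatorial lemma that any full binary tree containing neither a right-spine of $n$ vertices nor a left-spine of $m$ vertices at any node has at most finitely many leaves (an explicit binomial-coefficient bound suffices). Choosing $s$ strictly larger than this bound forces every full binary tree with $s$ leaves to contain a forbidden spine, so every generator of $A^{[s]}$ vanishes, giving $A^{[s]} = 0$.

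The main obstacle is making the combinatorial lemma precise. While the finiteness is intuitively clear from the way spine restrictions limit the branching of $\tau$, a clean induction requires tracking \emph{both} the global maxima of right- and left-spine lengths over all nodes \emph{and} the spine lengths at the root, because the decomposition $\tau = \tau_L * \tau_R$ only tightens one root-spine bound at a time. The algebraic propagation step is routine once the absorbing property of $0$ under $*$ is recorded, and Step~1 is immediate from unfolding the definition of $A^{[s]}$.
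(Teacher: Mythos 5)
The paper does not actually prove this statement; it is imported verbatim from Smoktunowicz, \emph{Trans.\ Amer.\ Math.\ Soc.} 370 (2018), Theorem 3.1, so there is no in-paper argument to compare with. Judged on its own terms, your proposal has a genuine gap at Step 1, and it is precisely the point that makes the brace case hard. In a left brace the operation $a*b=a\circ b-a-b$ is distributive only in the \emph{right} argument: $a*(b+c)=a*b+a*c$ holds, but $(a+b)*c\neq a*c+b*c$ in general. The terms $A^{[j]}*A^{[i+1-j]}$ in the recursion are additive subgroups generated by $x*y$ with $x$ ranging over \emph{all} of $A^{[j]}$, including sums of shorter products; since you cannot expand $(t_1+t_2)*y$ as $t_1*y+t_2*y$, the induction ``$A^{[s]}$ is additively spanned by full binary tree evaluations'' already breaks down at $A^{[3]}\supseteq A^{[2]}*A^{[1]}$, which contains elements such as $(a*b+c*d)*e$. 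Consequently, showing that every tree evaluation with $s$ leaves vanishes does not show $A^{[s]}=0$.

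The remaining ingredients are sound: $0*x=x*0=0$ does hold (the additive and multiplicative identities of a left brace coincide); a right-spine of $n$ vertices does force the subtree value into $A^n$, since right-normed products $\ell_0*(\ell_1*(\cdots))$ of $n$ elements lie in $A^n$ (here right distributivity saves you), and a left-normed product of $m$ elements lies in $A^{(m)}$; and the combinatorial bound on binary trees avoiding long left- and right-spines is standard. In fact your argument is essentially the classical proof for structures with \emph{two-sided} distributivity --- pre-Lie rings or general nonassociative rings --- where it is complete. For left braces, Smoktunowicz's actual proof must work around the failure of left distributivity, using brace identities such as $(a\circ b)*c=a*(b*c)+a*c+b*c$ together with $a\circ b=a+b+a*b$ to trade a sum in the left slot for a $\circ$-product at the cost of controlled lower-order terms; that mechanism is the missing idea in your write-up.
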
This theorem implies that left nilpotent braces that
are right nilpotent are strongly nilpotent. In the case where $A$ is a right nilpotent brace with a cardinality of $p^5$, the following theorem can be applied to systematically construct a pre-Lie algebra
\begin{theo}\cite[Theorem 6]{so3}
    Let $p>2$ be a prime number. Let $A$ be a strongly nilpotent brace with nilpotency index $k$ and cardinality $p^n$
for some prime number $p$, and some natural numbers
$k, n$ such that $k, n + 1 < p$. Define the binary operation $\cdot$ on $A$ as follows
\[a\cdot b=\sum_{i=0}^{p-2}\zeta^{p-1-i}\left((\zeta^ia)*b\right)\] for $a,b\in A$, where $\zeta^ia$ denotes the sum of $\zeta^i$ copies of element $a$. Then
\[(a\cdot b)\cdot c-a\cdot(b\cdot c)=(b \cdot a)\cdot c-b\cdot(a\cdot c)\] for every $a,b,c \in A$.
\end{theo}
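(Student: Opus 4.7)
My strategy is to recognize the sum defining $a\cdot b$ as a discrete Fourier projection on $\mathbb{F}_p^{\ast}$ that extracts the degree-one coefficient (in $a$) of the polynomial expansion of $(m a)\ast b$ in the integer variable $m$, and then to derive the pre-Lie identity by combining the standard brace identity $(u\circ v)\ast c = u\ast c + v\ast c + u\ast(v\ast c)$ with the commutativity of the additive group $(A,+)$.

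\textbf{Step 1: polynomial expansion and Fourier extraction.} I would first fix an integer $\zeta$ whose residue class modulo $p$ generates $\mathbb{F}_p^{\ast}$. The strong nilpotency hypothesis together with the bound $k<p$ forces the map $m \mapsto (m a)\ast b$ to be polynomial in $m$ with coefficients in $A$, of degree at most $k-1 < p-1$:
\[(m a)\ast b = \sum_{j=1}^{k-1} m^j\, T_j(a, b).\]
The same argument yields a two-variable refinement $(r_1 a_1 + r_2 a_2)\ast c = \sum_{i,j} r_1^i r_2^j\, S_{ij}(a_1,a_2,c)$ satisfying $S_{i,0}=T_i(a_1,c)$ and $S_{0,j}=T_j(a_2,c)$. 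Substituting $m=\zeta^i$ into the definition of $a\cdot b$ and applying the orthogonality $\sum_{i=0}^{p-2}\zeta^{i(j-1)}\equiv -\delta_{j,1}\pmod p$ (valid for $1\le j\le k-1<p-1$ because $\zeta$ is a primitive root) gives $a\cdot b\equiv -T_1(a,b)\pmod p$. Bi-additivity of $T_1$ then follows: the second slot from $a\ast(b+c)=a\ast b + a\ast c$, and the first slot by substituting $r_1=r_2=m$ into the two-variable expansion and comparing the $m$-coefficient with that of $(m(a_1+a_2))\ast c$.

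\textbf{Step 2: reducing the pre-Lie identity to a symmetry of $S_{11}$.} Apply the brace identity with $u=r a_1$, $v=r a_2$, and extract the $r^2$-coefficient in two ways. The right-hand side $u\ast c + v\ast c + u\ast(v\ast c)$ expands, using Step 1 and right-additivity of $T_1$, to $T_2(a_1,c)+T_2(a_2,c)+T_1(a_1,T_1(a_2,c))$. For the left-hand side, write $u\circ v = r(a_1+a_2) + r^2 T_1(a_1,a_2) + O(r^3)$ and expand $(u\circ v)\ast c = \sum_{n\ge 1}T_n(u\circ v, c)$ via the one-parameter setup. The homogeneity $T_n(r x,c)=r^n T_n(x,c)$ (immediate from the definition of the $T_n$) shows that $T_n$ only contributes to powers $r^{\ge n}$, so only $T_1$ and $T_2$ produce an $r^2$-term, yielding $T_1(T_1(a_1,a_2),c)+T_2(a_1+a_2,c)$. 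Using the expansion $T_2(a_1+a_2,c)=T_2(a_1,c)+S_{11}(a_1,a_2,c)+T_2(a_2,c)$ and equating the two computations gives the key identity
\[S_{11}(a_1,a_2,c) = T_1\bigl(a_1, T_1(a_2,c)\bigr) - T_1\bigl(T_1(a_1,a_2), c\bigr).\]

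\textbf{Step 3 and main obstacle.} Since $+$ is commutative, $(r_1 a_1 + r_2 a_2)\ast c = (r_2 a_2 + r_1 a_1)\ast c$, whence $S_{11}(a_1,a_2,c)=S_{11}(a_2,a_1,c)$. Substituting into the identity above and rewriting via $a\cdot b = -T_1(a,b)$ together with bi-additivity of $T_1$ yields precisely $(a_1\cdot a_2)\cdot c - a_1\cdot(a_2\cdot c) = (a_2\cdot a_1)\cdot c - a_2\cdot(a_1\cdot c)$. I expect the main technical difficulty to lie in Step 1: rigorously establishing the polynomial expansion of $(m a)\ast b$ (and its multi-variable refinement), the homogeneity of each $T_n$ in its first slot, and the well-definedness of the expansion modulo $p$. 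All three rest on the hypothesis $k<p$, which prevents any degree-wraparound modulo $p$; once these structural facts are secured, Step 2 is a careful but essentially routine expansion, and Step 3 is immediate from the commutativity of the additive group.
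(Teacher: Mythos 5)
The paper states this result verbatim as a quotation of \cite[Theorem 6]{so3} and supplies no proof of its own, so the only meaningful comparison is with Smoktunowicz's original argument. Your strategy --- expand $(ma)*b$ as a polynomial in $m$ of degree less than $p-1$, recognise the defining sum as a character-orthogonality projection onto the linear coefficient $-T_1(a,b)$, and then obtain the pre-Lie identity by extracting the $r^2$-coefficient of the brace identity $(u\circ v)*c=u*c+v*c+u*(v*c)$ with $u=ra_1$, $v=ra_2$ and using the symmetry of $S_{11}$ forced by commutativity of $+$ --- is essentially the route taken in the source, and the algebra in your Steps 2 and 3 checks out: the identity $S_{11}(a_1,a_2,c)=T_1(a_1,T_1(a_2,c))-T_1(T_1(a_1,a_2),c)$ together with $S_{11}(a_1,a_2,c)=S_{11}(a_2,a_1,c)$ and $a\cdot b=-T_1(a,b)$ does yield the pre-Lie relation.

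Two concrete caveats. First, everything rests on the polynomiality package of your Step 1 (the one- and two-variable expansions, the homogeneity $T_n(rx,c)=r^nT_n(x,c)$, and the substitution rule needed to expand $(u\circ v)*c$ when $u\circ v$ is itself a polynomial in $r$ with terms of different degrees). You rightly flag this as the main difficulty, but it is where nearly all of the work in the source lies: one writes $ma$ in terms of $\circ$-powers, uses $(a^{\circ m})*b=\sum_{i\ge 1}\binom{m}{i}\,a^{*(i)}b$ (with $a^{*(i)}b$ the $i$-fold left $*$-iterate), and inducts along the filtration $A^{[i]}$; as written, your proposal presupposes this lemma rather than proving it. Second, your coefficient extraction is only valid verbatim when $(A,+)$ has exponent $p$: the orthogonality relation $\sum_{i}\zeta^{i(j-1)}\equiv-\delta_{j,1}$ and the Vandermonde uniqueness of the $T_j$ hold only modulo $p$, while the theorem concerns an arbitrary strongly nilpotent brace of cardinality $p^n$, whose additive group may have exponent $p^m$ with $m>1$; in that case the Fourier sum returns $-T_1(a,b)$ only up to multiples of $p$ of the higher coefficients. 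This is precisely why the hypothesis $n+1<p$ appears in the statement, and your argument never invokes it --- a tell-tale sign of the gap. For the $\mathbb{F}_p$-braces to which the present paper applies the theorem the additive group is elementary abelian and your argument goes through, but a proof of the theorem as stated must also control those surviving $p$-multiples.
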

Utilizing the Rump correspondence \cite{r1}, we have the capability to define an $\mathbb{F}_p$-brace $(A, +, \circ)$ that shares the same addition as the pre-Lie algebra $A$ and employs multiplication $\circ$ as defined in the group of flows. It is further assumed that $A$ is a nilpotent pre-Lie algebra, and the characteristic of $\mathbb{F}_p$ exceeds the nilpotency index of $A$. Using notations from \cite{ma} we have 
\begin{itemize}
    \item Let $a \in A$, and let $L_a : A \rightarrow A$ denote the left multiplication by $a$, so $L_a(b) = a \cdot b$.
Define $ L_c \cdot L_b(a) = L_c(L_b(a)) = c \cdot (b \cdot a)$. Define
\[e^{L_a}(b)=b+a\cdot b+\frac{1}{2!}a\cdot(a\cdot b)+\frac{1}{3!}a\cdot\left(a\cdot(a\cdot b)\right)+\cdots\]
\item We can formally consider element $1$ such that $1 \cdot a = a\cdot 1 = a$ in our pre-Lie algebra
(as in \cite{ma}) and define
\[W(a)=e^{L_a}(1)-1=a+\frac{1}{2!}a\cdot a+\frac{1}{3!}a\cdot\left(a\cdot a\right)+\cdots\]Notice that $W(a) : A \rightarrow A$ is a bijective function, provided that $A$ is a nilpotent
pre-Lie algebra.
\item  Let $\Omega(a) : A \rightarrow A$ be the inverse function to the function $W(a)$, so $\Omega(W(a)) =
W(\Omega(a)) = a$. Following \cite{ma} the first terms of $\Omega$ are
\[\Omega(a)=a+\frac{1}{2}a\cdot a+\frac{1}{4}(a\cdot a)\cdot a+\cdots\]
\item Define 
\[a\circ b=a+e^{L_{\Omega(a)}}(b)\] Here the addition is same as in pre-Lie algebra. 
\end{itemize}
Therefore, we establish a one-to-one correspondence between nilpotent pre-Lie rings of cardinality $p^n$ and braces of identical cardinality. In the forthcoming sections, our emphasis is on the construction of a nilpotent pre-Lie algebra with a cardinality of $p^5$, where $(A, +)$ is isomorphic to $C_p^5$, consequently achieving the creation of right nilpotent $\mathbb{F}_p$-braces with a cardinality of $p^5$.

\section{Nilpotency Index} Before delving into the nilpotency index of pre-Lie algebra we will show that there exists a pre-Lie algebra $A$ of cardinality $p^5$ such that $A^{[6]} 
 \subset A^{[5]} = A^{[4]}$ are non-zero and $A^{[7]} = 0$.
 \begin{example}
     Consider the pre-Lie algebra $A$ with basis $\{\alpha_1,\ldots,\alpha_5\}$ over the field $\mathbb{F}_p$ together with the multiplications as follows
     \[\alpha_1 \cdot \alpha_1=\alpha_2,~\alpha_1 \cdot \alpha_4=\alpha_5,~\alpha_2 \cdot \alpha_1=\alpha_3,~\alpha_2 \cdot \alpha_3=\alpha_4,~\alpha_3 \cdot \alpha_3=-\alpha_5,~\alpha_4 \cdot \alpha_1=\alpha_5,\]
     \[\alpha_i \cdot \alpha_j=0~\text{for all $i,j$ otherwise}.\]
    it is easy to check with the multiplication table that all of the pre-Lie
algebra relations
\[(a \cdot b) \cdot c - a \cdot (b \cdot c) = (b \cdot a) \cdot c - b \cdot (a \cdot c)\] via taking $a,b,c$ as basis elements.
\par Note that 
\begin{align*}
    A^{[2]}&=\mathbb{F}_p \alpha_2+\mathbb{F}_p \alpha_3+\mathbb{F}_p \alpha_4+\mathbb{F}_p \alpha_5.\\
    A^{[3]}&=\mathbb{F}_p \alpha_3+\mathbb{F}_p \alpha_4+\mathbb{F}_p \alpha_5.\\
    A^{[4]}&=\mathbb{F}_p \alpha_4+\mathbb{F}_p \alpha_5.\\
     A^{[5]}&=\mathbb{F}_p \alpha_4+\mathbb{F}_p \alpha_5.\\
      A^{[6]}&=\mathbb{F}_p \alpha_5.\\
       A^{[7]}&=0.
\end{align*}

 \end{example}
\subsection{Nilpotency Index} We start this subsection by stating the following theorem.
\begin{theo}\label{nil}
    Let $(A,+,\cdot)$ be a nilpotent pre-Lie algebra over $\mathbb{F}_p$ of cardinality $p^5$. Then $A^{[8]}=0$.
\end{theo}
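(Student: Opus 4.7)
The plan is to split the argument into two stages: a cheap dimension bound on the left- and right-normed chains, followed by a structural bound on the strongly nilpotent chain $A^{[n]}$.

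First, since $A$ is nilpotent, the left-normed chain $A\supseteq A^2\supseteq A^3\supseteq\cdots$ strictly descends while nonzero: the equality $A^{n+1}=A\cdot A^n = A^n$ would iterate to $A^{n+k}=A^n$ for every $k\geq 0$, contradicting nilpotency unless $A^n = 0$. Combined with $\dim A=5$, this forces $A^6 = 0$, and the mirror argument on the right chain yields $A^{(6)} = 0$.

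Writing $d_n = \dim A^{[n]}$, the sequence is nonincreasing with $d_1 = 5$. If $A^{[8]} \neq 0$ then $(d_1,\ldots,d_8)$ is a strictly positive nonincreasing $8$-tuple taking values in $\{1,\ldots,5\}$, so pigeonhole forces at least three adjacent equalities $d_i = d_{i+1}$. Hence the theorem reduces to showing that the chain $A^{[n]}$ can \emph{stall} (i.e., satisfy $A^{[n]}=A^{[n+1]}\ne 0$) at most twice before reaching zero. The plan for this is to unpack a stall via the recursion
\[
A^{[n+1]} = \sum_{j=1}^{n} A^{[j]}\cdot A^{[n+1-j]}
\]
together with the pre-Lie identity $(a\cdot b)\cdot c - a\cdot (b\cdot c) = (b\cdot a)\cdot c - b\cdot (a\cdot c)$, rewriting each mixed contribution $A^{[j]}\cdot A^{[n+1-j]}$ (with $2\leq j\leq n-1$) as a sum of left- and right-normed products modulo shorter terms, and then using $A^6 = A^{(6)} = 0$ from the first step to show that three or more stalls collide with the strict descent of $A^n$ and $A^{(n)}$.

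The main obstacle is controlling the mixed terms $A^{[j]}\cdot A^{[n+1-j]}$ with both $j$ and $n+1-j$ at least $2$; the pre-Lie identity only \emph{rearranges} bracket depth rather than strictly decreasing it, so one must supply a well-founded statistic (for instance, the number of internal right-associations in a bracketed word) whose decrease under each application of the identity makes the reduction terminate. The example preceding the theorem shows that a single stall can indeed arise from such a mixed contribution: the element $\alpha_2\cdot \alpha_3 \in A^{[2]}\cdot A^{[2]}$ supplies $\alpha_4$ to both $A^{[4]}$ and $A^{[5]}$, keeping $d_4 = d_5 = 2$. Ruling out three such events is the heart of the proof. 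If this generic structural argument proves stubborn, the five-dimensional hypothesis permits a fallback case analysis organised by the number of generators of $A$, mirroring Sections~4--7, which verifies $A^{[8]}=0$ algebra by algebra.
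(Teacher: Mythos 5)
Your high-level strategy matches the paper's: observe that if $A^{[8]}\neq 0$ the chain $A\supseteq A^{[2]}\supseteq\cdots\supseteq A^{[8]}$ must contain repeated terms, then use the recursion $A^{[n+1]}=\sum_{j=1}^{n}A^{[j]}\cdot A^{[n+1-j]}$ together with the pre-Lie identity to show that a stalled chain stabilizes at a nonzero subspace, contradicting nilpotency. However, the proposal stops exactly where the proof begins. You yourself flag that ``ruling out three such events is the heart of the proof,'' and that heart is never supplied: you give no argument that a stall propagates, no treatment of the mixed terms $A^{[j]}\cdot A^{[n+1-j]}$ with both $j,n+1-j\geq 2$, and no well-founded statistic making your proposed rewriting terminate. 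The paper resolves precisely this point by explicit degree-counting: for instance, assuming $A^{[5]}=A^{[6]}=A^{[7]}=A^{[8]}\neq 0$, it shows $A^{[4]}\cdot A^{[4]}\subseteq A^{[9]}$ by applying $x\cdot(y\cdot z)=(x\cdot y)\cdot z-(y\cdot x)\cdot z+y\cdot(x\cdot z)$ with $x\in A^{[4]}$ and $(y,z)$ ranging over $(A^{[3]},A)$, $(A^{[2]},A^{[2]})$, $(A,A^{[3]})$, each right-hand term landing in $A^{[9]}$ because of the assumed equalities. Nothing in your proposal substitutes for these computations.

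Two further points. First, your reduction to ``the chain can stall at most twice'' is not the statement one actually needs, and it is false as a blanket heuristic: the paper's case analysis shows that the \emph{position} of the stall matters --- $A^{[2]}=A^{[3]}$ or $A^{[3]}=A^{[4]}$ forces the stalled term to vanish outright, while $A^{[4]}=A^{[5]}\neq 0$ is genuinely possible (see Example~3.1, where $A^{[4]}=A^{[5]}$ and $A^{[7]}=0$), and requires a separate, longer argument when $A^{[6]}\subsetneq A^{[5]}$. Second, your opening step $A^{6}=A^{(6)}=0$ is correct but does not obviously help: $A^{[n]}$ is spanned by \emph{all} bracketings of $n$ elements, not just left- or right-normed ones, so vanishing of $A^{6}$ and $A^{(6)}$ gives no direct bound on $A^{[8]}$; the paper never uses these chains. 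The fallback of verifying the claim ``algebra by algebra'' via the classification in Sections~4--7 is also unavailable, since those sections rely on this theorem. As it stands the proposal is a plausible outline with the essential technical content missing.
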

\begin{proof}
    Consider the sets $A^{[5]}, A^{[4]}, A^{[3]},A^{[2]}, A^{[1]}=A$. Here the following two cases arise\\
    \textbf{Case I:} Assume 
    \[A^{[5]} \neq A^{[4]} \neq A^{[3]} \neq A^{[2]} \neq A^{[1]}=A\]
    If any of these sets equal $0$, we are done. So let us assume all of them are non zero. Then for each $i=1,2,3,4$, $A^{[i]}/A^{[i+1]}$ has cardinality $p$ and $A^{[5]}$
has also cardinality $p$. We will show that in this case $A^{[8]}=0$.\\
On contrary assume $A^{[8]}\neq 0$. Then we must have $A^{[8]}=A^{[7]}=A^{[6]}=A^{[5]}$.
Now,
$A^{[8]}=\sum_{i=1}^{7}A^{[i]}\cdot A^{[8-i]}$.\\
Note $A \cdot A^{[7]}=A \cdot A^{[8]}\subset A^{[9]},~ A^{[2]} \cdot A^{[6]}=A^{[2]} \cdot A^{[7]}\subset A^{[9]},~A^{[3]} \cdot A^{[5]}=A^{[3]} \cdot A^{[6]}\subset A^{[9]}$. Similarly, $A^{[5]} \cdot A^{[3]}, ~A^{[6]} \cdot A^{[2]},~A^{[7]} \cdot A \subset A^{[9]}$. Now let us consider $A^{[4]} \cdot A^{[4]}$. For $x \in A^{[4]}, y \in A^{[3]}, z \in A$, we have from 
\begin{equation}\label{im}
    (x \cdot y) \cdot z - x \cdot (y \cdot z) = (y \cdot x) \cdot z - y \cdot (x \cdot z)
\end{equation}
 $x \cdot (y \cdot z) \in A^{[7]} \cdot A + A^{[3]} \cdot A^{[5]} \subset A^{[9]}$.\\
Similarly if $x \in A^{[4]}, y \in A^{[2]}, z \in A^{[2]}$, $x \cdot (y \cdot z) \in A^{[6]} \cdot A^{[2]} + A^{[2]} \cdot A^{[6]} \subset A^{[9]}$ and if  $x \in A^{[4]}, y \in A, z \in A^{[3]}$, $x \cdot (y \cdot z) \in A^{[5]} \cdot A^{[3]} + A \cdot A^{[7]} \subset A^{[9]}$. So $A^{[4]} \cdot A^{[4]} \subset A^{[9]}$. Hence $A^{[8]} \subset A^{[9]}$. Using decomposition of $A^{[9]}$, we can show $A^{[9]} \subset A^{[10]}$. Continuing we get \[A^{[5]} \subset A^{[6]} \subset A^{[7]} \subset A^{[8]} \subset A^{[9]} \subset A^{[10]}\subset \cdots \]
As $A$ is nilpotent we have $A^{[5]}=0$, a contradiction. Hence $A^{[8]}=0$.\\
\textbf{Case II:} Let there exists $i \in \{1,2,3,4\}$ such that $A^{[i]}=A^{[i+1]}$. Note if $A=A^{[2]}$, then for every $j$, $A^{[j]} \subset A^{[j+1]}$ and we have $A=0$.\\
If $A^{[2]}=A^{[3]}$, then for every $j \geq 2$, $A^{[j]}=A^{[j+1]}$ and hence $A^{[2]}=0$.\\
If $A^{[3]}=A^{[4]}$, then we claim $A^{[3]}=0$. Note $A^{[4]}=A\cdot A^{[3]}+A^{[2]}\cdot A^{[2]}+A^{[3]}\cdot A$. Since $A^{[3]}=A^{[4]}$, we have $A\cdot A^{[3]}, A^{[3]}\cdot A \subset A^{[5]}$. Now from $x \in A^{[2]},y,z \in A$, we get form 
\[(x \cdot y) \cdot z - x \cdot (y \cdot z) = (y \cdot x) \cdot z - y \cdot (x \cdot z)\] that $x\cdot (y \cdot z) \in A\cdot A^{[3]}+ A^{[3]}\cdot A \subset A^{[5]}$. Hence $A^{[2]}\cdot A^{[2]}\subset A^{[5]}$. So $A^{[4]} \subset A^{[5]}$. Using induction we can show that $A^{[j]}=A^{[j+1]}~ \forall j \geq 3$ implying $A^{[3]}=0$.\\
If $A^{[4]}=A^{[5]}$, we claim $A^{[8]}=0$. Note if $A^{[4]}=A^{[5]}=A^{[6]}$, then $A \cdot A^{[5]}, A^{[2]} \cdot A^{[4]}, A^{[4]} \cdot A^{[2]}, A^{[5]} \cdot A \subset A^{[7]}$. If $x \in A^{[3]},y \in A^{[2]},z \in A$, from \ref{im} we have $x \cdot (y \cdot z) \in A^{[5]}\cdot A^{[2]}+A^{[2]}\cdot A^{[4]} \subset A^{[7]}$. Similarly for $x \in A^{[3]},y \in A,z \in A^{[2]}$, from \ref{im} we have $x \cdot (y \cdot z) \in A^{[4]}\cdot A^{[2]}+A\cdot A^{[5]} \subset A^{[7]}$. So $A^{[3]} \cdot A^{[3]} \subset A^{[7]} $.\\
Note that $A^{[6]}=\sum_{i=1}^{5}A^{[i]}\cdot A^{[6-i]}$ implies $A^{[6]} \subset A^{[7]}$. Using induction we have $A^{[j]}=A^{[j+1]}$ for any $j \geq 4$ and hence $A^{[4]}=0$. Now if $A^{[6]} \neq A^{[5]}=A^{[4]} $, then $|A^{[6]}|=p$. If $A^{[8]} \neq 0$, then $A^{[8]}=A^{[7]}=A^{[6]}\neq A^{[5]}=A^{[4]}$.\\
We claim that $A^{[3]} \cdot A^{[2]} \subset A^{[6]}$. Indeed for $x \in A^{[3]}, y, z \in A$ from \ref{im} $x \cdot (y \cdot z) \in A^{[4]} \cdot A +A \cdot A^{[4]}=A^{[5]} \cdot A +A \cdot A^{[5]} \subset A^{[6]}$.\\
Note $A \cdot A^{[7]}, A^{[2]}\cdot A^{[6]}, A^{[4]} \cdot A^{[4]}, A^{[6]} \cdot A^{[2]}, A^{[7]} \cdot A \subset A^{[9]}$. Now for $x \in A^{[5]}, y \in A^{[2]}, z \in A$, from \ref{im} we have $x\cdot (y \cdot z) \in A^{[7]} \cdot A + A^{[2]} \cdot A^{[6]} \subset A^{[9]}$. Similarly if $x \in A^{[5]}, y \in A, z \in A^{[2]}$, from \ref{im} we have $x\cdot (y \cdot z) \in A^{[6]} \cdot A^{[2]} + A \cdot A^{[7]} \subset A^{[9]}$. So $A^{[5]} \cdot A^{[3]} \subset A^{[9]}$.\\
Now for $x \in A^{[3]}, y \in A^{[4]}, z \in A$, again from \ref{im}, we have $x\cdot (y \cdot z) \in A^{[7]} \cdot A + A^{[4]} \cdot A^{[4]}= A^{[7]} \cdot A + A^{[4]} \cdot A^{[5]}\subset A^{[9]}$.\\
Similarly for $x \in A^{[3]}, y \in A^{[3]}, z \in A^{[2]}$ we have  $x\cdot (y \cdot z) \in A^{[3]} \cdot ( A^{[3]} \cdot A^{[2]}) \subset A^{[3]} \cdot  A^{[6]}\subset A^{[9]}$.\\
Similarly for $x \in A^{[3]}, y \in A^{[2]}, z \in A^{[3]}$, again from \ref{im}, we have $x\cdot (y \cdot z) \in A^{[5]} \cdot A^{[3]} + A^{[2]} \cdot A^{[6]} \subset A^{[9]}$.\\
Finally for $x \in A^{[3]}, y \in A, z \in A^{[4]}$, again from \ref{im}, we have $x\cdot (y \cdot z) \in   A^{[4]} \cdot A^{[4]}+A \cdot A^{[7]}=  A^{[4]} \cdot A^{[5]}+A \cdot A^{[7]}  \subset A^{[9]}$.\\
Finally $A^{[5]} \cdot A^{[3]} \subset A^{[9]}$. Note $A^{[8]}=\sum_{i=1}^{7}A^{[i]}\cdot A^{[8-i]}$ implies $A^{[8]} \subset A^{[9]}$. Using induction we can prove that $A^{[j]}=A^{[j+1]}$ for all $j \geq 6$ which in turn imply $A^{[6]}=0$, a contradiction. Hence $A^{[8]}=0$.
\end{proof}
\section{pre-Lie algebra $A$ with $(A,+)\cong C_p^5$ and generated by one element as a pre-Lie algebra}
We start this section by mentioning the following result.
\begin{theo}
    Let  $(A,+)\cong C_p^5$ and generated by a single element $x$ as a pre-Lie algebra. Then $A^{[4]} \neq 0$.
\end{theo}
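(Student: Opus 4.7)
The plan is to bound $\dim_{\mathbb{F}_p}(A^{[k]}/A^{[k+1]})$ for small $k$ using the single-generator hypothesis, and to deduce that the dimensions accumulated through $k=3$ fall strictly below $\dim_{\mathbb{F}_p} A = 5$, which would be impossible if $A^{[4]}$ vanished. In other words, I will argue that the quotient $A/A^{[4]}$ is too small to exhaust all of $A$.

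First, I would verify by induction on $k$ that $A^{[k]}$ coincides with the $\mathbb{F}_p$-span of all iterated products (in every non-associative parenthesization) of the generator $x$ of length at least $k$. The base case $k=1$ is immediate from the single-generator hypothesis, and the inductive step uses the recursion $A^{[k+1]} = \sum_{j=1}^{k} A^{[j]}\cdot A^{[k+1-j]}$ together with the bilinearity of the pre-Lie product: a product of a length-$i$ monomial with a length-$(k+1-i)$ monomial is a length-$(k+1)$ monomial, and any length-$(k+1)$ monomial arises this way for some split. It follows that $A^{[k]}/A^{[k+1]}$ is spanned by the images of the monomials in $x$ of length exactly $k$. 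Counting these non-associative monomials for $k=1,2,3$ gives $1$, $1$, and $2$ respectively: namely $x$; then $x\cdot x$; then $x\cdot(x\cdot x)$ and $(x\cdot x)\cdot x$. Consequently $\dim_{\mathbb{F}_p}(A/A^{[2]}) \leq 1$, $\dim_{\mathbb{F}_p}(A^{[2]}/A^{[3]}) \leq 1$, and $\dim_{\mathbb{F}_p}(A^{[3]}/A^{[4]}) \leq 2$.

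Adding these estimates yields $\dim_{\mathbb{F}_p}(A/A^{[4]}) \leq 1+1+2 = 4$. Since $\dim_{\mathbb{F}_p} A = 5$, the quotient $A/A^{[4]}$ cannot be all of $A$, and therefore $A^{[4]} \neq 0$. The only step that demands a little care is the inductive characterization of $A^{[k]}$ as the span of length-$\geq k$ monomials in $x$; once that is in hand, the counting and dimension comparison is completely mechanical. Notably, nilpotency of $A$ is not actually invoked here—only the finite dimensionality $\dim_{\mathbb{F}_p} A = 5$ and the single-generator hypothesis are used—so the obstacle, such as it is, is purely bookkeeping about the Catalan-style enumeration of parenthesizations at lengths $1$, $2$, and $3$.
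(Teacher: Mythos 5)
Your proposal is correct and is essentially the paper's own argument: the paper assumes $A^{[4]}=0$ and observes that $A$ is then spanned by the four monomials $x$, $x^2$, $x\cdot x^2$, $x^2\cdot x$, forcing $\dim_{\mathbb{F}_p}A\le 4<5$. Your version merely routes the same monomial count through the filtration quotients $A^{[k]}/A^{[k+1]}$, which adds bookkeeping but no new content.
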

\begin{proof}
    On contrary let us assume $A^{[4]}=0$. Then $A$ is generated by $x,x^2,x\cdot x^2,x^2 \cdot x$ as a $\mathbb{F}_p$-vector space. Hence $\dime_{\mathbb{F}_p}A \leq 4$.
\end{proof}
\subsection{$A^{[7]} \neq 0 $ and $A^{[8]}=0$}
\begin{lemma}\label{l1}
    In pre-Lie algebra $A$ with $A^{[5]} \neq A^{[4]}$, we have
    \[A\cdot A^{[6]}=A^{[6]} \cdot A=A^{[2]} \cdot A^{[5]}=A^{[2]}\cdot A^{[5]}=A^{[4]} \cdot A^{[3]}=0\]Moreover the only possible non-zero elements in $A^{[7]}$ are the following
    \[(x^2 \cdot x)\cdot\left((x^2 \cdot x)\cdot x\right),(x \cdot x^2)\cdot\left((x^2 \cdot x)\cdot x\right),(x \cdot x^2)\cdot\left((x \cdot x^2)\cdot x\right)\]
\end{lemma}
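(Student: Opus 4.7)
The plan is to first fix the dimension profile of the radical chain, then to read off most of the vanishings from $A^{[8]}=0$, then to handle the substantive $A^{[4]}\cdot A^{[3]}=0$ by pre-Lie manipulations, and finally to reduce $A^{[7]}=A^{[3]}\cdot A^{[4]}$ to the three listed spanning elements.

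Since $A$ is singly generated with $\dim A=5$, and since $A^{[4]}\neq 0$ by the preceding theorem while $A^{[5]}\neq A^{[4]}$ by hypothesis, the chain $A\supsetneq A^{[2]}\supsetneq A^{[3]}\supsetneq A^{[4]}\supsetneq A^{[5]}$ is strict with each quotient one-dimensional, giving $\dim A^{[5]}=1$. Combined with $A^{[7]}\neq 0$ and $A^{[8]}=0$ (Theorem~\ref{nil}), no further strict drop can occur, so $A^{[5]}=A^{[6]}=A^{[7]}$. This yields $A\cdot A^{[6]}=A\cdot A^{[7]}\subseteq A^{[8]}=0$, and symmetrically $A^{[6]}\cdot A=0$; likewise $A^{[2]}\cdot A^{[5]}=A^{[2]}\cdot A^{[6]}\subseteq A^{[8]}=0$, with $A^{[5]}\cdot A^{[2]}=0$ by the same token (I read the repeated term in the statement as the left--right swap).

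The substantive claim $A^{[4]}\cdot A^{[3]}=0$ uses the pre-Lie identity in the form $(u\cdot v)\cdot w=u\cdot(v\cdot w)+(v\cdot u)\cdot w-v\cdot(u\cdot w)$. Decompose $A^{[4]}=A\cdot A^{[3]}+A^{[2]}\cdot A^{[2]}+A^{[3]}\cdot A$; for $w\in A^{[3]}$ the identity sends the ``swap'' term into $A\cdot A^{[6]}=0$ or $A\cdot A^{[7]}=0$ whenever $\{u,v\}\subseteq A\cup A^{[3]}$, transferring $(A\cdot A^{[3]})\cdot A^{[3]}$ onto $(A^{[3]}\cdot A)\cdot A^{[3]}+A^{[3]}\cdot A^{[4]}$ and vice versa. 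For $u,v\in A^{[2]}$ both swap terms fall into $A^{[2]}\cdot A^{[5]}=0$, and using $A^{[2]}=\mathbb{F}_p x^2+A^{[3]}$ together with $A^{[5]}\cdot A^{[3]}\subseteq A^{[8]}=0$, the piece reduces to $(x^2\cdot x^2)\cdot w$, which one verifies to be zero by another pre-Lie identity whose error terms all live in $A^{[5]}\cdot A^{[2]}$, $A^{[2]}\cdot A^{[5]}$ and $A\cdot A^{[6]}$. The coupling between $A^{[4]}\cdot A^{[3]}$ and $A^{[3]}\cdot A^{[4]}$ is then closed by the next step, which pins down the surviving $A^{[3]}\cdot A^{[4]}$ contributions as the three listed elements of the form $A^{[3]}\cdot((\cdot)\cdot x)$; feeding these back through the coupling leaves $A^{[4]}\cdot A^{[3]}=0$.

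For the ``moreover'', once $A^{[4]}\cdot A^{[3]}=0$, $A^{[7]}$ collapses to $A^{[3]}\cdot A^{[4]}$. The generators of $A^{[4]}$ modulo $A^{[5]}$ are $x^2\cdot x^2$, $x\cdot(x\cdot x^2)$, $x\cdot(x^2\cdot x)$, $(x\cdot x^2)\cdot x$, $(x^2\cdot x)\cdot x$. For $u\in\{x\cdot x^2,x^2\cdot x\}$ and $v$ among the first three, applying the pre-Lie identity with $a=u$ and $b$ the leftmost factor of $v$ sends $u\cdot v$ entirely into $A^{[4]}\cdot A^{[3]}=0$, $A\cdot A^{[6]}=0$ and $A^{[2]}\cdot A^{[5]}=0$, so those six products vanish. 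Of the remaining four candidates $u\cdot v$ with $v\in\{(x\cdot x^2)\cdot x,(x^2\cdot x)\cdot x\}$, the pre-Lie identity with $a=x^2\cdot x$, $b=x\cdot x^2$, $c=x$ identifies $(x^2\cdot x)\cdot((x\cdot x^2)\cdot x)$ with $(x\cdot x^2)\cdot((x^2\cdot x)\cdot x)$ modulo $A\cdot A^{[6]}=A^{[6]}\cdot A=0$, leaving exactly the three listed. The main obstacle is the coupling issue in the previous paragraph: the pre-Lie identity never strictly eliminates $A^{[4]}\cdot A^{[3]}$, so one has to run the $A^{[4]}\cdot A^{[3]}$ and $A^{[3]}\cdot A^{[4]}$ reductions in tandem and exploit the one-dimensionality of $A^{[7]}$ to close the loop.
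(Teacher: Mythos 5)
Your overall skeleton (the strict chain with one-dimensional quotients, $A^{[5]}=A^{[6]}=A^{[7]}$, reading the easy vanishings off $A^{[8]}=0$, and the final identification $(x^2\cdot x)\cdot\left((x\cdot x^2)\cdot x\right)=(x\cdot x^2)\cdot\left((x^2\cdot x)\cdot x\right)$) matches the paper, and your treatment of the ``moreover'' part is essentially the paper's. The genuine gap is your argument for $A^{[4]}\cdot A^{[3]}=0$: you decompose the \emph{left} factor $A^{[4]}=A\cdot A^{[3]}+A^{[2]}\cdot A^{[2]}+A^{[3]}\cdot A$ and apply the identity to $(u\cdot v)\cdot w$, which only shuffles $A^{[4]}\cdot A^{[3]}$ into itself plus $A^{[3]}\cdot A^{[4]}$ (your ``coupling''); moreover for the piece $(x^2\cdot x^2)\cdot w$ the identity with $a=b=x^2$ is vacuous, so the promised ``another pre-Lie identity'' does not materialize in that form. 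The proposed closure --- running the two reductions in tandem and exploiting one-dimensionality of $A^{[7]}$ --- is circular: your elimination of the six products $u\cdot v$ with $v\in\{x^2\cdot x^2,\,x\cdot(x\cdot x^2),\,x\cdot(x^2\cdot x)\}$ already takes $A^{[4]}\cdot A^{[3]}=0$ as an input, and knowing $\dim A^{[7]}=1$ cannot distinguish ``$A^{[4]}\cdot A^{[3]}=0$'' from ``$A^{[4]}\cdot A^{[3]}=A^{[7]}$''.

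The correct move (the paper's) is to decompose the \emph{right} factor: $A^{[3]}$ is spanned by products $y\cdot z$ with $y\in A^{[2]},z\in A$ or $y\in A,z\in A^{[2]}$, and for $x\in A^{[4]}$ the identity gives $x\cdot(y\cdot z)=(x\cdot y)\cdot z-(y\cdot x)\cdot z+y\cdot(x\cdot z)$, whose three terms lie in $A^{[6]}\cdot A$, $A^{[6]}\cdot A$, $A^{[2]}\cdot A^{[5]}$ in the first case and in $A^{[5]}\cdot A^{[2]}$, $A^{[5]}\cdot A^{[2]}$, $A\cdot A^{[6]}$ in the second --- all already known to vanish from the first part. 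This kills $A^{[4]}\cdot A^{[3]}$ outright, with no coupling, and then your reduction of $A^{[3]}\cdot A^{[4]}$ to the three listed elements goes through as written.
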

\begin{proof}
    As $A^{[5]} \neq A^{[4]}$, we have $A^{[7]} = A^{[6]}=A^{[5]}\neq A^{[4]}\neq A^{[3]}\neq A^{[2]}\neq A$.\\
    Note $A \cdot A^{[6]}=A \cdot A^{[7]}\subset A^{[8]}=0$.Similarly $A^{[6]} \cdot A=A^{[2]} \cdot A^{[5]}=A^{[2]}\cdot A^{[5]}=0$.\\
    Now for $A^{[4]}\cdot A^{[3]}$, we have the following
    \begin{itemize}
        \item For $x \in A^{[4]}, y \in A^{[2]},z \in A$, from \ref{im}, we have $x \cdot (y \cdot z) \in A^{[6]}\cdot A+A^{[2]}\cdot A^{[5]}=0.$
        \item For $x \in A^{[4]}, y \in A,z \in A^{[2]}$, from \ref{im}, we have $x \cdot (y \cdot z) \in A^{[5]}\cdot A^{[2]}+A\cdot A^{[6]}=0.$
    \end{itemize}
    Hence $A^{[4]}\cdot A^{[3]}=0$.\\
    For $A^{[3]}\cdot A^{[4]}$, we have the following
    \begin{itemize}
        \item For $x \in A^{[3]}, y \in A,z \in A^{[3]}$, from \ref{im}, we have $x \cdot (y \cdot z) \in A^{[4]}\cdot A^{[3]}+A\cdot A^{[6]}=0.$
        \item For $x \in A^{[4]}, y,z \in A^{[2]}$, from \ref{im}, we have $x \cdot (y \cdot z) \in A^{[5]}\cdot A^{[2]}+A^{[2]}\cdot A^{[5]}=0.$
    \end{itemize}
    So the only possible non zero terms in $A^{[3]}\cdot A^{[4]}$ are of the form $x \cdot (y \cdot z)$ where $x,y \in A^{[3]},z \in A$. Now from pre-Lie algebra relation we have,
    \[\left((x \cdot x^2)\cdot(x^2 \cdot x)\right).x-(x \cdot x^2)\cdot\left((x^2 \cdot x).x\right)=\left((x^2 \cdot x)\cdot(x \cdot x^2)\right).x-(x^2 \cdot x)\cdot\left((x \cdot x^2).x\right)\]
    Note $\left((x \cdot x^2)\cdot(x^2 \cdot x)\right).x,\left((x^2 \cdot x)\cdot(x \cdot x^2)\right).x \in A^{[6]}\cdot A=0$. So we have
    \[(x \cdot x^2)\cdot\left((x^2 \cdot x).x\right)=(x^2 \cdot x)\cdot\left((x \cdot x^2).x\right).\]
    So the only non possible non zero elements in $A^{[7]}$ are $(x^2 \cdot x)\cdot\left((x^2 \cdot x)\cdot x\right),(x \cdot x^2)\cdot\left((x^2 \cdot x)\cdot x\right),(x \cdot x^2)\cdot\left((x \cdot x^2)\cdot x\right)$.\\
    In $A^{[6]}$, $A\cdot A^{[5]}=A\cdot A^{[7]}\subset A^{[8]}=0$. Similarly $A^{[5]}\cdot A=0$.\\
    For $A^{[2]}\cdot A^{[4]}$, we have
    \begin{itemize}
        \item For $x\in A^{[2]}, y \in A^{[3]}, z \in A$, $x\cdot(y\cdot z)\in A^{[5]}\cdot A+A^{[3]}\cdot A^{[3]}=A^{[3]}\cdot A^{[3]}$.
        \item For $x\in A^{[2]}, y \in A, z \in A^{[3]}$, $x\cdot(y\cdot z)\in A\cdot A^{[5]}+A^{[3]}\cdot A^{[3]}=A^{[3]}\cdot A^{[3]}$.
    \end{itemize}
    Note that for $x^2\cdot(x^2\cdot x^2)$ from \[(x^2\cdot x)\cdot x-x^2\cdot x^2=(x \cdot x^2)\cdot x -x \cdot (x^2\cdot x)\] we have 
    \begin{align*}\label{eq1}
        x^2\cdot\left((x^2\cdot x)\cdot x\right)-x^2\cdot(x^2\cdot x^2)&=x^2\cdot\left((x \cdot x^2)\cdot x\right) -x^2\cdot\left(x \cdot (x^2\cdot x)\right)\\
        x^2\cdot(x^2\cdot x^2)&=x^2\cdot\left((x^2\cdot x)\cdot x\right)+x^2\cdot\left(x \cdot (x^2\cdot x)\right)-x^2\cdot\left((x \cdot x^2)\cdot x\right)\\
        x^2\cdot(x^2\cdot x^2)&=(x^2\cdot x)\cdot (x^2\cdot x)+(x^2\cdot x)\cdot (x^2\cdot x)-(x \cdot x^2)\cdot(x^2\cdot x).
    \end{align*}
 Now for $x \in A^{[4]},y,z\in A$, $x\cdot(y\cdot z)\in A^{[5]}\cdot A+A\cdot A^{[5]}=0 $. So $A^{[4]}\cdot A^{[2]}=0$.\\  
 Now for $A^{[3]}\cdot A^{[3]}$, we have for $x\in A^{[3]}, y \in A,z\in A^{[2]}$, $x\cdot(y\cdot z)\in A^{[4]}\cdot A^{[2]}+A\cdot A^{[5]}=0$. So $(x^2\cdot x)\cdot (x \cdot x^2)=(x \cdot x^2)\cdot (x \cdot x^2)=0$.
\end{proof}
\begin{lemma}\label{l2}
    In pre-Lie algebra $A$ with $A^{[5]}= A^{[4]}$, we have
    \[A\cdot A^{[6]}=A^{[6]} \cdot A=A^{[3]} \cdot A^{[4]}=A^{[4]} \cdot A^{[3]}=A^{[5]}\cdot A^{[2]}=0\]Moreover the only possible non-zero elements in $A^{[7]}$ are the following
    \[x^2\cdot(x^2 \cdot (x\cdot x^2)),x^2\cdot(x^2 \cdot (x^2\cdot x))\]
\end{lemma}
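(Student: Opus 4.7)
The strategy is to combine a dimension count with repeated use of the pre-Lie identity, paralleling Lemma~\ref{l1} but carefully handling the degeneracy $A^{[5]}=A^{[4]}$. First I would pin down the lower central filtration of $A$. Since $A$ is generated by $x$, the quotients $A/A^{[2]}$, $A^{[2]}/A^{[3]}$, and $A^{[3]}/A^{[4]}$ are spanned respectively by $x$, by $x^2$, and by $\{x\cdot x^2,\, x^2\cdot x\}$, hence have dimensions at most $1,1,2$. By the Case~II analysis inside the proof of Theorem~\ref{nil}, none of these quotients can be zero (otherwise $A^{[k]}=0$ for some $k\le 3$, contradicting $A^{[7]}\neq 0$), and the same analysis gives $A^{[5]}\supsetneq A^{[6]}$ (else $A^{[4]}=0$). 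Combining these with $\dim A=5$ and $A^{[8]}=0$, a dimension count forces $\dim A^{[i]}=5,4,3,2,2,1,1,0$ for $i=1,\ldots,8$; in particular $A^{[6]}=A^{[7]}$.

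With this equality at hand, four of the five vanishings follow at once from $A^{[8]}=0$: $A\cdot A^{[6]}=A\cdot A^{[7]}\subset A^{[8]}=0$, and symmetrically $A^{[6]}\cdot A=0$; likewise $A^{[3]}\cdot A^{[4]}=A^{[3]}\cdot A^{[5]}\subset A^{[8]}=0$, and $A^{[4]}\cdot A^{[3]}=0$ by symmetry. For $A^{[5]}\cdot A^{[2]}=0$ I would mimic the pre-Lie argument of Lemma~\ref{l1}: for any $a\in A^{[5]}$ and any generator $b\cdot c$ of $A^{[2]}$ (with $b,c\in A$), the identity
\[a\cdot(b\cdot c)=(a\cdot b)\cdot c-(b\cdot a)\cdot c+b\cdot(a\cdot c)\]
has all three right-hand terms in $A^{[6]}\cdot A+A\cdot A^{[6]}=0$, so $a\cdot(b\cdot c)=0$.

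For the classification of $A^{[7]}$, I observe that in the expansion $A^{[7]}=\sum_{i=1}^{6}A^{[i]}\cdot A^{[7-i]}$ only $A^{[2]}\cdot A^{[5]}$ survives; writing $A^{[2]}=\mathbb{F}_p x^2+A^{[3]}$ together with $A^{[3]}\cdot A^{[5]}=0$ reduces this further to $A^{[7]}=x^2\cdot A^{[5]}$. I then decompose a general $c\in A^{[5]}$ along $A^{[5]}=A\cdot A^{[4]}+A^{[2]}\cdot A^{[3]}+A^{[3]}\cdot A^{[2]}+A^{[4]}\cdot A$ and apply the pre-Lie identity with $(x^2,b,d)$ to each factor $c=b\cdot d$. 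In the three cases $(b,d)\in A\times A^{[4]}$, $A^{[3]}\times A^{[2]}$, and $A^{[4]}\times A$, all three resulting terms lie in previously-established zero products, forcing $x^2\cdot c=0$; only the case $b\in A^{[2]}$, $d\in A^{[3]}$ leaves a surviving term, the identity collapsing to $x^2\cdot(b\cdot d)=b\cdot(x^2\cdot d)$.

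In this final case, writing $b=\alpha x^2+b'$ with $b'\in A^{[3]}$ kills the $b'$-tail via $A^{[3]}\cdot A^{[5]}=0$, and writing $d=\beta(x\cdot x^2)+\gamma(x^2\cdot x)+d'$ with $d'\in A^{[4]}$ kills the $d'$-tail via $x^2\cdot(x^2\cdot d')\in A^{[2]}\cdot A^{[6]}\subset A^{[8]}=0$. This yields
\[x^2\cdot c=\alpha\beta\,x^2\cdot\bigl(x^2\cdot(x\cdot x^2)\bigr)+\alpha\gamma\,x^2\cdot\bigl(x^2\cdot(x^2\cdot x)\bigr),\]
exactly the two displayed generators. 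The principal subtlety is this last step: the pre-Lie identity does not outright annihilate $x^2\cdot(b\cdot d)$ but merely re-expresses it as $b\cdot(x^2\cdot d)$, which a priori again lies in $A^{[2]}\cdot A^{[5]}$, so the reduction looks circular; the key point is that one further multiplication by $x^2$ then forces the $A^{[3]}$- and $A^{[4]}$-tails into $A^{[8]}=0$, terminating the reduction on exactly the two claimed generators.
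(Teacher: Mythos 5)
Your argument is correct and takes essentially the same route as the paper: the first four vanishings come from rewriting via $A^{[6]}=A^{[7]}$ and $A^{[4]}=A^{[5]}$ into $A^{[8]}=0$, the fifth and the classification of $A^{[7]}$ come from case-by-case application of the pre-Lie identity to the decomposition $A^{[5]}=\sum A^{[i]}\cdot A^{[5-i]}$, leaving only $A^{[2]}\cdot(A^{[2]}\cdot A^{[3]})$. Your write-up is in fact slightly more careful than the paper's in two places it leaves implicit: the dimension count justifying $A^{[6]}=A^{[7]}$, and the final reduction of the surviving term $x^2\cdot(b\cdot d)=b\cdot(x^2\cdot d)$ onto the two stated generators via $b\equiv\alpha x^2 \pmod{A^{[3]}}$ and $d\equiv\beta(x\cdot x^2)+\gamma(x^2\cdot x)\pmod{A^{[4]}}$.
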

\begin{proof}
    Here $A^{[7]} = A^{[6]}\neq A^{[5]}= A^{[4]}\neq A^{[3]}\neq A^{[2]}\neq A$.\\
    Note $A \cdot A^{[6]}=A \cdot A^{[7]}\subset A^{[8]}=0$. Similarly $A^{[6]} \cdot A=A^{[3]} \cdot A^{[4]}=A^{[4]}\cdot A^{[3]}=0$.\\
    For $A^{[5]}\cdot A^{[2]}$, note if $x \in A^{[5]},y,z \in A$, then $x \cdot(y \cdot z)\in A^{[6]}\cdot A+A \cdot A^{[6]}=0$.Hence $A^{[5]}\cdot A^{[2]}=0$.\\
    For $A^{[2]}\cdot A^{[5]}$ we have
    \begin{itemize}
        \item If $x \in A^{[2]}, y \in A, z \in A^{[4]}$, then $x \cdot(y \cdot z) \in A^{[3]}\cdot A^{[4]}+A \cdot A^{[6]}=0$.
        \item If $x \in A^{[2]}, y \in A^{[4]}, z \in A$, then $x \cdot(y \cdot z) \in A^{[6]}\cdot A+A^{[4]} \cdot A^{[3]}=0$.
        \item If $x \in A^{[2]}, y \in A^{[3]}, z \in A^{[2]}$, then $x \cdot(y \cdot z) \in A^{[5]}\cdot A^{[2]}+A^{[3]} \cdot A^{[4]}=0$.
    \end{itemize}
    Hence the only possible non zero elements in $A^{[7]}$ are $x^2\cdot(x^2 \cdot (x\cdot x^2)),x^2\cdot(x^2 \cdot (x^2\cdot x))$.
    \par Also note due to the fact $A^{[4]}=A^{[5]}$, we have 
    $A^{[4]}\cdot A=A^{[5]}\cdot A\subset A^{[6]}$. Similarly $A\cdot A^{[4]}\subset A^{[6]}$. For $x\in A^{[3]}, y,z \in A$, we have $x\cdot (y \cdot z)\in A^{[4]}\cdot A+A\cdot A^{[4]}\subset A^{[6]}$. Also for $x\in A^{[2]}, y \in A,z \in A^{[2]}, x\cdot(y\cdot z)\in A^{[3]}\cdot A^{[2]}+A\cdot A^{[4]}\subset A^{[6]}$. So the only element of $A^{[5]}$ which is not in $A^{[6]}$ is $x^2\cdot (x^2 \cdot x)$.
\end{proof}
\begin{theo}
    Let $(A,+,\cdot)$ be a pre-Lie algebra with $(A,+)\cong C_{p}^5$ and $A$ is generated by a single element $x$. Moreover let $A^{[7]} \neq 0$ and $A^{[5]} \neq A^{[4]}$. Then we can say the following
    \begin{itemize}
        \item[1.] Elements $x,x^2,x^2\cdot x,(x^2\cdot x)\cdot x, (x^2\cdot x) \cdot \left((x^2\cdot x)\cdot x\right)$ form a basis of the pre-Lie algebra $A$ as a $\mathbb{F}_p$ vector space.
        \item[2.] For some $\alpha \in \mathbb{F}_p^{\times}$ we have $(x^2\cdot x) -\alpha(x \cdot x^2)\in A^{[4]}$.
        \item[3.] $(x \cdot x^2)\cdot x-\alpha(x^2 \cdot x)\cdot x \in A^{[5]}$, $x\cdot(x^2\cdot x), x\cdot(x\cdot x^2) \in A^{[5]}$. 
        \item[4.] In $A^{[5]}$ the elements $x\cdot \left(x\cdot(x\cdot x^2)\right),x\cdot \left(x\cdot(x^2\cdot x)\right),\left(x\cdot(x\cdot x^2)\right)\cdot x,\left(x\cdot(x^2\cdot x)\right)\cdot x$ are $0$. Also the following relations hold in $A^{[5]}$
        \begin{itemize}
            \item[(i)] $x\cdot\left((x \cdot x^2)\cdot x\right)=\alpha ~x\cdot\left((x^2 \cdot x)\cdot x\right), \left((x \cdot x^2)\cdot x\right)\cdot x=\alpha\left((x^2 \cdot x)\cdot x\right)\cdot x$.
            \item[(ii)]  $x\cdot(x^2\cdot x^2)=(1-\alpha)~x\cdot\left((x^2\cdot x)\cdot x\right)$, $(x^2\cdot x^2)\cdot x=(1-\alpha)\left((x^2\cdot x)\cdot x\right)\cdot x$.
            \item[(iii)] $(x^2 \cdot x)\cdot x^2=\left((x^2 \cdot x)\cdot x\right)\cdot x+x\cdot\left((x^2\cdot x)\cdot x\right),(x \cdot x^2)\cdot x^2=\alpha(x^2 \cdot x)\cdot x^2$
            
        \end{itemize}
         Also $x^2\cdot (x\cdot x^2)-\alpha x^2\cdot(x^2\cdot x) \in A^{[6]}$.
\item[5.] $(x\cdot x^2)\cdot (x^2 \cdot x)=\alpha (x^2 \cdot x)\cdot (x^2 \cdot x), x^2\cdot(x^2\cdot x^2)=(2-\alpha)(x^2\cdot x)\cdot (x^2\cdot x)$. 
\item[6.] $A^{[7]}=A^{[6]}=A^{[5]}=\mathbb{F}_p~\left((x^2\cdot x) \cdot \left((x^2\cdot x)\cdot x\right)\right), A^{[4]}/A^{[5]}=\mathbb{F}_p~\left((x^2\cdot x)\cdot x\right),\\ A^{[3]}/A^{[4]}= \mathbb{F}_p~\left(x^2\cdot x\right)$. 
    \end{itemize}
\end{theo}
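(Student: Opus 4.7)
\emph{Filtration and strategy.} The hypotheses $A^{[7]}\neq 0$ and $A^{[5]}\neq A^{[4]}$ place us in the setting of Lemma \ref{l1}, yielding the strictly descending chain $A \supsetneq A^{[2]} \supsetneq A^{[3]} \supsetneq A^{[4]} \supsetneq A^{[5]} = A^{[6]} = A^{[7]} \neq 0$. Since $|A|=p^5$ and the chain has five strict inclusions, each factor $A^{[i]}/A^{[i+1]}$ for $i\leq 4$, as well as $A^{[5]}$ itself, is one-dimensional over $\mathbb{F}_p$. The whole theorem therefore reduces to pinning down a nonzero representative in each graded piece and expressing every other natural product as a scalar multiple of that representative.

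\emph{Parts 1, 2, 6 (the basis).} Because $A$ is generated by $x$, the residues of $x$ and $x^2$ span $A/A^{[2]}$ and $A^{[2]}/A^{[3]}$. The subspace $A^{[3]}$ is generated modulo $A^{[4]}$ by the images of $x\cdot x^2$ and $x^2\cdot x$ only; since the quotient is one-dimensional and nonzero, at least one of these is nonzero mod $A^{[4]}$, and we take $x^2\cdot x$ as the representative. A unique $\alpha\in\mathbb{F}_p$ then satisfies $(x^2\cdot x)-\alpha(x\cdot x^2)\in A^{[4]}$, and $\alpha\neq 0$ (otherwise $x^2\cdot x\in A^{[4]}$), giving Part 2. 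The same mechanism yields $(x^2\cdot x)\cdot x$ as representative of $A^{[4]}/A^{[5]}$, and Lemma \ref{l1}'s explicit list of candidate nonzero elements of $A^{[7]}$ produces $(x^2\cdot x)\cdot((x^2\cdot x)\cdot x)$ as the generator of $A^{[5]}$, the other two candidates becoming scalar multiples of this via the pre-Lie identity. Parts 1 and 6 are then the summary.

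\emph{Parts 3 and 4 (propagating Part 2 by multiplication).} Multiplying the relation of Part 2 on the right by $x$ and using $A^{[4]}\cdot A\subset A^{[5]}$ produces the first identity of Part 3. For the vanishings $x\cdot(x^2\cdot x),\ x\cdot(x\cdot x^2)\in A^{[5]}$, apply the pre-Lie identity with $(a,b,c)=(x,x^2,x)$,
\[
(x\cdot x^2)\cdot x - x\cdot(x^2\cdot x) = (x^2\cdot x)\cdot x - x^2\cdot x^2,
\]
and combine with the first identity of Part 3 and the vanishings of Lemma \ref{l1} in $A^{[5]}$ to close the system. Part 4(i) follows by multiplying Part 3 on the left/right by $x$ and invoking $A\cdot A^{[5]}=A^{[5]}\cdot A=0$. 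Part 4(ii) applies the same pre-Lie identity and substitutes (i), using $x\cdot(x\cdot(x^2\cdot x))\in A\cdot A^{[5]}=0$. Part 4(iii) is obtained by pre-Lie on $(x^2\cdot x,x,x)$ and by multiplying Part 2 on the right by $x^2$, the error lying in $A^{[4]}\cdot A^{[2]}=0$ (established in the proof of Lemma \ref{l1}).

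\emph{Part 5 and the main obstacle.} Multiplying Part 2 on the right by $x^2\cdot x$ and invoking $A^{[4]}\cdot A^{[3]}=0$ gives the first identity of Part 5, and substituting it into the formula $x^2\cdot(x^2\cdot x^2)=2(x^2\cdot x)\cdot(x^2\cdot x) - (x\cdot x^2)\cdot(x^2\cdot x)$ recorded in the proof of Lemma \ref{l1} yields the second. The main obstacle throughout is Part 3's claim that $x\cdot(x^2\cdot x)$ and $x\cdot(x\cdot x^2)$ lie in $A^{[5]}$: a priori these are elements of $A^{[4]}$ with potentially nonzero residue in the one-dimensional quotient $A^{[4]}/A^{[5]}$, and ruling out that residue requires precisely the interaction between the pre-Lie identity and Lemma \ref{l1}'s vanishings to close the constraints, thereby isolating $(x^2\cdot x)\cdot x$ as the unique (up to scalar) representative of $A^{[4]}/A^{[5]}$ and forcing the subsequent identities in Parts 4 and 5.
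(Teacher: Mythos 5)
Your overall strategy coincides with the paper's: use Lemma \ref{l1} to obtain the strict chain with one-dimensional graded pieces, pick representatives, and propagate relations through the pre-Lie identity. But two steps in your plan are not actually closed by the mechanisms you name. First, you ``take $x^2\cdot x$ as the representative'' of $A^{[3]}/A^{[4]}$; a priori the nonzero class could instead be that of $x\cdot x^2$, with $x^2\cdot x\in A^{[4]}$, and ruling this out is not free. The paper shows that $x^2\cdot x\in A^{[4]}$ forces all three candidate generators of $A^{[7]}$ from Lemma \ref{l1} to vanish --- the third one, $(x\cdot x^2)\cdot\bigl((x\cdot x^2)\cdot x\bigr)$, only after left-multiplying the identity $(x^2\cdot x)\cdot x-x^2\cdot x^2=(x\cdot x^2)\cdot x-x\cdot(x^2\cdot x)$ by $x\cdot x^2$ and using $A^{[3]}\cdot(A^{[2]}\cdot A^{[2]})=A^{[3]}\cdot(A\cdot A^{[3]})=0$ --- contradicting $A^{[7]}\neq 0$. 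Without this, neither Part 1 nor the nonvanishing of $(x^2\cdot x)\cdot\bigl((x^2\cdot x)\cdot x\bigr)$ is available. (Relatedly, what one gets for free is $(x\cdot x^2)-\lambda(x^2\cdot x)\in A^{[4]}$ with $\lambda$ possibly zero; converting this to the form in Part 2 with $\alpha\in\mathbb{F}_p^{\times}$ additionally needs $x\cdot x^2\notin A^{[4]}$, a point the paper itself elides, so I do not weigh it against you.)

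Second, and more seriously, the step you yourself flag as ``the main obstacle'' --- showing $x\cdot(x^2\cdot x),\,x\cdot(x\cdot x^2)\in A^{[5]}$ --- is not resolved by what you describe. The pre-Lie identity on $(x,x^2,x)$ combined with $(x\cdot x^2)\cdot x\equiv\alpha\,(x^2\cdot x)\cdot x$ yields a single linear relation among the residues of $x\cdot(x^2\cdot x)$, $(x^2\cdot x)\cdot x$ and $x^2\cdot x^2$ in the one-dimensional space $A^{[4]}/A^{[5]}$, namely $x\cdot(x^2\cdot x)\equiv(\alpha-1)(x^2\cdot x)\cdot x+x^2\cdot x^2$, which cannot force the residue of $x\cdot(x^2\cdot x)$ to vanish since the residue of $x^2\cdot x^2$ is still unknown. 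The idea that closes it is a pairing against $A^{[7]}$: write $x\cdot(x^2\cdot x)-\beta\,(x^2\cdot x)\cdot x\in A^{[5]}$, left-multiply by $x^2\cdot x$ (legitimate because $A^{[2]}\cdot A^{[5]}=0$), and observe that $(x^2\cdot x)\cdot\bigl(x\cdot(x^2\cdot x)\bigr)=0$ since the analysis of $A^{[3]}\cdot A^{[4]}$ in Lemma \ref{l1} kills every product whose inner factor lies in $A\cdot A^{[3]}$, whereas $(x^2\cdot x)\cdot\bigl((x^2\cdot x)\cdot x\bigr)\neq 0$; hence $\beta=0$. This extra input is what your plan is missing; once it is supplied, Parts 4--6 do follow by the routine substitutions you indicate.
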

\begin{proof}
    By Lemma \ref{l1}, we know the only possible non zero elements in $A^{[7]}$ are $(x^2 \cdot x)\cdot\left((x^2 \cdot x)\cdot x\right),(x \cdot x^2)\cdot\left((x^2 \cdot x)\cdot x\right),(x \cdot x^2)\cdot\left((x \cdot x^2)\cdot x\right)$.
Reasoning as in Lemma \ref{l1} we obtain $A^{[3]}/A^{[4]}$ has dimension $1$ as a vector space over the field $\mathbb{F}_{p}$. We claim $x^2\cdot x \notin A^{[4]}$ and hence a non zero element in $A^{[3]}/A^{[4]}$. Indeed if $x^2\cdot x \in A^{[4]}$, then $(x^2 \cdot x)\cdot\left((x^2 \cdot x)\cdot x\right)=(x \cdot x^2)\cdot\left((x^2 \cdot x)\cdot x\right)=0$. Also from 
\begin{equation}\label{e1}
    (x^2\cdot x)\cdot x-x^2\cdot x^2=(x \cdot x^2)\cdot x -x \cdot (x^2\cdot x)
\end{equation}
we have 
\[(x \cdot x^2)\cdot\left((x^2\cdot x)\cdot x\right)-(x \cdot x^2)\cdot(x^2\cdot x^2)=(x \cdot x^2)\cdot\left((x \cdot x^2)\cdot x\right) -(x \cdot x^2)\cdot\left(x \cdot (x^2\cdot x)\right)\]
Note $(x \cdot x^2)\cdot(x^2\cdot x^2) \in A^{[3]}\cdot (A^{[2]}\cdot A^{[2]}) =0$ and $(x \cdot x^2)\cdot\left(x \cdot (x^2\cdot x)\right) \in A^{[3]}\cdot (A\cdot A^{[3]})=0$. So we end up getting $(x \cdot x^2)\cdot\left((x \cdot x^2)\cdot x\right)=0$ which implies $A^{[7]}=0$, a contradiction. Hence there exists $\alpha \in \mathbb{F}_p$ such that 
\begin{equation}\label{e2}
(x \cdot x^2)-\alpha(x^2 \cdot x) \in A^{[4]}.
\end{equation}

So we have $(x^2 \cdot x)\cdot\left((x \cdot x^2)\cdot x\right)-\alpha(x^2 \cdot x)\cdot\left((x^2 \cdot x)\cdot x\right) \in A^{[8]}=0$ implying $(x^2 \cdot x)\cdot\left((x \cdot x^2)\cdot x\right)=\alpha(x^2 \cdot x)\cdot\left((x^2 \cdot x)\cdot x\right)$. \\
Similarly\[(x \cdot x^2)\cdot\left((x \cdot x^2)\cdot x\right)=\alpha(x \cdot x^2)\cdot\left((x^2 \cdot x)\cdot x\right)=\alpha(x^2 \cdot x)\cdot\left((x \cdot x^2)\cdot x\right)=\alpha^2(x^2 \cdot x)\cdot\left((x^2 \cdot x)\cdot x\right).\]
So clearly $A^{[7]}=\mathbb{F}_p((x^2 \cdot x)\cdot\left((x^2 \cdot x)\cdot x\right))$.\\
Note that $(x^2\cdot x)\cdot x \in A^{[4]}\setminus A^{[5]}$ otherwise $(x^2 \cdot x)\cdot\left((x^2 \cdot x)\cdot x\right)=0$ implying $A^{[7]}=0$. From Lemma \ref{l1} we have $A^{[4]}/A^{[5]}$ is one dimensional as $\mathbb{F}_p$ vector space and hence we can assume $A^{[4]}/A^{[5]}=\mathbb{F}_p\left((x^2\cdot x)\cdot x\right)$. So we must have \[x\cdot(x^2\cdot x)-\beta(x^2\cdot x)\cdot x \in A^{[5]}\] implying
\[(x^2\cdot x)\cdot\left(x\cdot(x^2\cdot x)\right)=\beta(x^2\cdot x)\cdot\left((x^2\cdot x)\cdot x\right). \] As $(x^2\cdot x)\cdot\left(x\cdot(x^2\cdot x)\right)=0$ and $(x^2\cdot x)\cdot\left((x^2\cdot x)\cdot x\right) \neq 0$, we have $\beta=0$. So $x\cdot(x^2\cdot x) \in A^{[5]}$. Similarly $x\cdot(x\cdot x^2) \in A^{[5]}$. Now from \ref{e2}, we have 
\[(x \cdot x^2)\cdot x-\alpha(x^2 \cdot x)\cdot x \in A^{[5]}.\]
Now in $A^{[5]}$, due to the fact that $x\cdot(x^2\cdot x),x\cdot(x\cdot x^2) \in A^{[5]}$ and $A \cdot A^{[5]}=A^{[5]}\cdot A=0$, we have \[x\cdot \left(x\cdot(x\cdot x^2)\right)=x\cdot \left(x\cdot(x^2\cdot x)\right)=\left(x\cdot(x\cdot x^2)\right)\cdot x=\left(x\cdot(x^2\cdot x)\right)\cdot x=0.\]
Due to similar reason we have
\[x\cdot\left((x \cdot x^2)\cdot x\right)=\alpha ~x\cdot\left((x^2 \cdot x)\cdot x\right), \left((x \cdot x^2)\cdot x\right)\cdot x=\alpha\left((x^2 \cdot x)\cdot x\right)\cdot x.\]
now from \ref{e1},
\[x\cdot\left((x^2\cdot x)\cdot x\right)-x\cdot(x^2\cdot x^2)=x\cdot\left((x \cdot x^2)\cdot x\right) -x\cdot(x \cdot (x^2\cdot x))\] implying
$x\cdot(x^2\cdot x^2)=(1-\alpha)x\cdot\left((x^2\cdot x)\cdot x\right)$. Similarly $(x^2\cdot x^2)\cdot x=(1-\alpha)\left((x^2\cdot x)\cdot x\right)\cdot x$. Now from
\[\left((x^2 \cdot x)\cdot x\right)\cdot x-(x^2 \cdot x)\cdot x^2=\left(x\cdot(x^2\cdot x)\right)\cdot x-x\cdot\left((x^2\cdot x)\cdot x\right)\] we have \[(x^2 \cdot x)\cdot x^2=\left((x^2 \cdot x)\cdot x\right)\cdot x+x\cdot\left((x^2\cdot x)\cdot x\right).\]
Similarly \[(x \cdot x^2)\cdot x^2=\left((x\cdot x^2)\cdot x\right)\cdot x-((x \cdot (x \cdot x^2))\cdot x=\alpha \left[\left((x^2 \cdot x)\cdot x\right)\cdot x+x\cdot\left((x^2\cdot x)\cdot x\right)\right]=\alpha(x^2 \cdot x)\cdot x^2.\]
From $x\cdot x^2-\alpha x^2\cdot x \in A^{[4]}$, we have $x^2\cdot (x\cdot x^2)-\alpha x^2\cdot(x^2\cdot x) \in A^{[6]}$.\\
Now for $A^{[6]}$ from \ref{e1}, we have \[
(x\cdot x^2)\cdot (x^2 \cdot x)-\alpha (x^2 \cdot x)\cdot (x^2 \cdot x)\in A^{[4]}\cdot A^{[3]}=0.
\] So we get \[(x\cdot x^2)\cdot (x^2 \cdot x)=\alpha (x^2 \cdot x)\cdot (x^2 \cdot x).\] From Lemma \ref{l1}, we have \[x^2\cdot(x^2\cdot x^2)=(2-\alpha)(x^2\cdot x)\cdot (x^2\cdot x).\]
\end{proof}
\begin{theo}
     Let $(A,+,\cdot)$ be a pre-Lie algebra with $(A,+)\cong C_{p}^5$ and $A$ is generated by a single element $x$. Moreover let $A^{[7]} \neq 0$ and $A^{[5]}= A^{[4]}$. Then we can say the following
     \begin{itemize}
         \item[1.] Elements $x,x^2,x^2\cdot x,x^2\cdot(x^2\cdot x), x^2\cdot\left(x^2\cdot(x^2\cdot x)\right)$ form a basis of the pre-Lie algebra $A$ as a $\mathbb{F}_p$ vector space.
         \item[2.] $x\cdot x^2 \in A^{[4]}$.
         \item[3.] There exists $\alpha_1,\alpha_2\in \mathbb{F}_p$ such that \[x\cdot(x^2\cdot x)-\alpha_1~x^2\cdot(x^2\cdot x),(x^2\cdot x)\cdot x-\alpha_2~x^2\cdot(x^2\cdot x)\in A^{[6]}. \] Also \[(x^2\cdot x)\cdot x-x^2\cdot x^2=(x \cdot x^2)\cdot x -x \cdot (x^2\cdot x).\]
         \item[4.] $x\cdot\left(x\cdot(x\cdot x^2)\right)=\left(x\cdot(x\cdot x^2)\right)\cdot x=x \cdot \left((x\cdot x^2)\cdot x\right)=\left((x\cdot x^2)\cdot x\right)\cdot x=(x\cdot x^2)\cdot x^2=0.$
         \item[5.] Possible non zero generators of $A^{[6]}$ are $x\cdot\left(x^2\cdot(x^2\cdot x)\right),\left(x^2\cdot(x^2\cdot x)\right)\cdot x, x^2\cdot\left(x\cdot(x^2\cdot x)\right),x^2\cdot\left((x^2\cdot x)\cdot x\right),x^2\cdot(x^2\cdot x^2),(x^2\cdot x)\cdot(x^2\cdot x),\left((x^2\cdot x)\cdot x\right)\cdot x^2, \left(x\cdot(x^2\cdot x)\right)\cdot x^2$. All the other generators are $0$.
         \item[6.] $A^{[7]}=A^{[6]}=\mathbb{F}_p~x^2\cdot\left(x^2\cdot(x^2\cdot x)\right)$, $A^{[5]}/A^{[6]}=A^{[4]}/A^{[6]}=\mathbb{F}_p~x^2\cdot(x^2\cdot x)$, $A^{[3]}/A^{[4]}=\mathbb{F}_p~(x^2\cdot x)$.
     \end{itemize}
\end{theo}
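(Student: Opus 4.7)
The strategy is to first pin down the radical chain using Theorem~\ref{nil} and Lemma~\ref{l2}, then use the specific formulas from Lemma~\ref{l2} together with the pre-Lie identity to extract the basis and the structure constants.

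First I would establish the dimension profile. Combining the hypothesis $A^{[7]}\neq 0$ with the Case~II analysis inside the proof of Theorem~\ref{nil} (each collapse $A^{[i]}=A^{[i+1]}$ at the wrong level forces $A^{[i]}=0$), we obtain a strict chain
\[
A\supsetneq A^{[2]}\supsetneq A^{[3]}\supsetneq A^{[4]}=A^{[5]}\supsetneq A^{[6]}=A^{[7]}\supsetneq 0.
\]
Since $\dime_{\mathbb{F}_p}A=5$, every successive quotient is forced to be one-dimensional: $A/A^{[2]}$, $A^{[2]}/A^{[3]}$, $A^{[3]}/A^{[4]}$, $A^{[4]}/A^{[6]}$, and $A^{[6]}$ each have dimension $1$.

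The crucial step is claim~2. I would first show $x^2\cdot x\notin A^{[4]}$: otherwise $x^2\cdot(x^2\cdot x)\in A^{[2]}\cdot A^{[4]}\subset A^{[6]}$, and since Lemma~\ref{l2} identifies $x^2\cdot(x^2\cdot x)$ as the only possible nonzero representative in $A^{[5]}/A^{[6]}$, this would collapse $A^{[5]}=A^{[6]}$, contradicting the dimension count. Then, since $A^{[3]}/A^{[4]}$ is one-dimensional and spanned by $x^2\cdot x$, I would write $x\cdot x^2\equiv \alpha\,(x^2\cdot x)\pmod{A^{[4]}}$. Left-multiplying by $x^2$ and using $x^2\cdot A^{[4]}\subset A^{[6]}$ yields $x^2\cdot(x\cdot x^2)\equiv \alpha\,x^2\cdot(x^2\cdot x)\pmod{A^{[6]}}$. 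But the pre-Lie identity with $a=x^2,b=x,c=x^2$, together with $(x^2\cdot x)\cdot x^2,(x\cdot x^2)\cdot x^2,x\cdot(x^2\cdot x^2)\in A^{[6]}$ (each obtained from a further pre-Lie identity and the vanishings of Lemma~\ref{l2}), gives $x^2\cdot(x\cdot x^2)\in A^{[6]}$. Hence $\alpha\,x^2\cdot(x^2\cdot x)\in A^{[6]}$, forcing $\alpha=0$ since $x^2\cdot(x^2\cdot x)\notin A^{[6]}$; so $x\cdot x^2\in A^{[4]}$.

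Claim~1 and the description of $A^{[7]}$ now fall out. The elements $x,x^2,x^2\cdot x$ lift the generators of the first three quotients, $x^2\cdot(x^2\cdot x)$ spans $A^{[4]}/A^{[6]}$, and $A^{[6]}=A^{[7]}$ is spanned by $x^2\cdot(x^2\cdot(x^2\cdot x))$: Lemma~\ref{l2} offers two candidate generators of $A^{[7]}$, but once $x\cdot x^2\in A^{[4]}$, the other candidate $x^2\cdot(x^2\cdot(x\cdot x^2))$ lies in $A^{[2]}\cdot(A^{[2]}\cdot A^{[4]})\subset A\cdot A^{[6]}=0$, so only $x^2\cdot(x^2\cdot(x^2\cdot x))$ survives, and it is nonzero by the hypothesis $A^{[7]}\neq 0$.

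The remaining claims reduce to bookkeeping. The second equation in claim~3 is the pre-Lie identity applied with $a=x,b=x^2,c=x$, and the scalars $\alpha_1,\alpha_2$ exist because $x\cdot(x^2\cdot x),(x^2\cdot x)\cdot x\in A^{[4]}=A^{[5]}$, whose quotient modulo $A^{[6]}$ is spanned by $x^2\cdot(x^2\cdot x)$. Claim~4 follows by iterating the vanishings $A\cdot A^{[6]}=A^{[6]}\cdot A=0$ and $A^{[4]}\cdot A^{[2]}=A^{[5]}\cdot A^{[2]}=0$ from Lemma~\ref{l2} together with $x\cdot x^2\in A^{[4]}$. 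Claim~5 is verified by running through the decomposition $A^{[6]}=\sum_{i=1}^{5}A^{[i]}\cdot A^{[6-i]}$ and discarding the summands killed by Lemma~\ref{l2}, invoking the pre-Lie identity to simplify the survivors; claim~6 is then a summary of claims~1--5. I expect the main obstacle to be claim~2: the asymmetry between $x\cdot x^2$ and $x^2\cdot x$ is delicate, and only the fact that $x^2\cdot(x\cdot x^2)$ (but not $x^2\cdot(x^2\cdot x)$) is reducible via the pre-Lie identity to a sum of terms sitting in $A^{[6]}$ pins down which of the two must lie in $A^{[4]}$.
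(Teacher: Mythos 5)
Your proposal is correct and follows essentially the same route as the paper: the strict chain with one\-/dimensional quotients from Lemma \ref{l2}, the exclusion $x^2\cdot x\notin A^{[4]}$ (else $x^2\cdot(x^2\cdot x)\in A^{[2]}\cdot A^{[4]}\subset A^{[6]}$ collapses $A^{[5]}=A^{[6]}$), and the key step $x\cdot x^2\in A^{[4]}$ obtained by left-multiplying $x\cdot x^2-\alpha\,x^2\cdot x\in A^{[4]}$ by $x^2$ and invoking $x^2\cdot(x\cdot x^2)\in A^{[6]}$ to force $\alpha=0$. The only cosmetic difference is that you re-derive $x^2\cdot(x\cdot x^2)\in A^{[6]}$ directly from the pre-Lie identity rather than quoting it from Lemma \ref{l2}, where it is already established.
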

\begin{proof}
    Note $x^2\cdot x \notin A^{[4]}$. Indeed otherwise $x^2 \cdot (x^2\cdot x) \in A^{[6]}$ implying $A^{[5]}=A^{[6]}$ from Lemma \ref{l2}. Hence $A^{[3]}/A^{[4]}=\mathbb{F}_p )(x^2\cdot x)$. Then for some $\alpha \in \mathbb{F}_p$ we have 
    \[x\cdot x^2-\alpha~x^2\cdot x \in A^{[4]}.\] This implies 
    \[x^2\cdot(x\cdot x^2)-\alpha~x^2\cdot(x^2\cdot x) \in A^{[6]}.\] Now from Lemma \ref{l2}, $x^2\cdot(x\cdot x^2) \in A^{[6]}$. So $\alpha=0$ implying $x\cdot x^2\in A^{[4]}$. Hence the only non zero element in $A^{[7]}$ is $x^2\cdot\left(x^2\cdot(x^2\cdot x)\right)$.\\
    In $A^{[4]}$, $x\cdot(x\cdot x^2),(x\cdot x^2)\cdot x \in A^{[6]}$. Also as $A^{[4]}/A^{[6]}=A^{[5]}/A^{[6]}=\mathbb{F}_p~x^2(x^2\cdot x)$, there exists $\alpha_1,\alpha_2\in \mathbb{F}_p$ such that \[x\cdot(x^2\cdot x)-\alpha_1~x^2\cdot(x^2\cdot x),(x^2\cdot x)\cdot x-\alpha_2~x^2\cdot(x^2\cdot x)\in A^{[6]}. \] Also \[(x^2\cdot x)\cdot x-x^2\cdot x^2=(x \cdot x^2)\cdot x -x \cdot (x^2\cdot x).\]
    Now in $A^{[5]}$, \[x\cdot\left(x\cdot(x\cdot x^2)\right)=\left(x\cdot(x\cdot x^2)\right)\cdot x=x \cdot \left((x\cdot x^2)\cdot x\right)=\left((x\cdot x^2)\cdot x\right)\cdot x=0.\] Also $(x\cdot x^2)\cdot x^2\in A^{[5]}\cdot A^{[2]}=0$. From Lemma \ref{l2}, all the other elements except $x^2\cdot(x^2\cdot x)$ are in $A^{[6]}$. \\
    In $A^{[6]}$, every element in $A\cdot A^{[5]}$ except $x\cdot\left(x^2\cdot(x^2\cdot x)\right)$ are $0$. Similarly every element in $ A^{[5]}\cdot A$ except $\left(x^2\cdot(x^2\cdot x)\right)\cdot x$ are $0$. Also as $x\cdot x^2 \in A^{[4]}$, we have 
    \begin{align*}
        x^2\cdot\left(x\cdot(x\cdot x^2)\right)=x^2\cdot\left((x\cdot x^2)\cdot x^2\right)&=(x\cdot x^2)\cdot(x\cdot x^2)=(x\cdot x^2)\cdot(x^2\cdot x)=\\
        (x^2\cdot x)\cdot (x\cdot x^2)=\left((x\cdot x^2)\cdot x\right)\cdot x^2&=\left(x\cdot ((x\cdot x^2)\right)\cdot x^2=0
    \end{align*}
    \end{proof}
    \subsection{$A^{[7]}=0$ and $A^{[6]}\neq 0$.} We start this subsection by stating the following theorem.
    \begin{theo}\label{im3}
        In pre-Lie algebra $A$ with $A^{[5]} \neq A^{[4]}$, we have
    $A\cdot A^{[5]}=A^{[5]}\cdot A=  A^{[4]} \cdot A^{[2]}=0$. Moreover $A^{[6]}$ is generated by $(x^2\cdot x)\cdot (x^2 \cdot x)$ and $(x\cdot x^2)\cdot (x^2 \cdot x)$ as a $\mathbb{F}_p$ vector space.
    \end{theo}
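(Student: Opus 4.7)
My plan is to first nail down the grading: since $A^{[6]}\neq 0$, running the dichotomies in the proof of Theorem \ref{nil} (Case II) rules out any earlier collapse, giving strict containments $A\supsetneq A^{[2]}\supsetneq A^{[3]}\supsetneq A^{[4]}\supsetneq A^{[5]}$; with $|A|=p^{5}$ this forces each quotient to be one-dimensional over $\mathbb{F}_p$ and $|A^{[5]}|\le p$. Together with $A^{[6]}\ne 0$, $A^{[6]}\subseteq A^{[5]}$, and $A^{[7]}=0$, this forces $A^{[6]}=A^{[5]}$ of order $p$.

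The first two annihilations are then immediate: $A\cdot A^{[5]}=A\cdot A^{[6]}\subseteq A^{[7]}=0$, and symmetrically $A^{[5]}\cdot A=0$. For $A^{[4]}\cdot A^{[2]}=0$, I take $u\in A^{[4]}$ and $v,w\in A$ and apply the pre-Lie identity
\[
(u\cdot v)\cdot w-u\cdot(v\cdot w)=(v\cdot u)\cdot w-v\cdot(u\cdot w).
\]
Here $u\cdot v,\,v\cdot u\in A^{[5]}$, so $(u\cdot v)\cdot w,\,(v\cdot u)\cdot w\in A^{[5]}\cdot A=0$; and $u\cdot w\in A^{[5]}$, so $v\cdot(u\cdot w)\in A\cdot A^{[5]}=0$. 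Thus $u\cdot(v\cdot w)=0$, and linearity in $v\cdot w$ gives $A^{[4]}\cdot A^{[2]}=0$.

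For the generators of $A^{[6]}$, I decompose $A^{[6]}=\sum_{i=1}^{5}A^{[i]}\cdot A^{[6-i]}$. The pieces $A\cdot A^{[5]}$, $A^{[5]}\cdot A$, and $A^{[4]}\cdot A^{[2]}$ vanish, leaving $A^{[2]}\cdot A^{[4]}+A^{[3]}\cdot A^{[3]}$. A pre-Lie expansion of $u\cdot(v\cdot z)$ with $u\in A^{[2]}$ and $(v,z)\in A\times A^{[3]}$ or $A^{[3]}\times A$ pushes the result into $A^{[3]}\cdot A^{[3]}$, the unwanted terms dying in $A^{[5]}\cdot A$ or $A\cdot A^{[5]}$. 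The remaining subcase $v,z\in A^{[2]}$ I handle by first rewriting $v\cdot z$: taking $z=a\cdot b$ with $a,b\in A$ and applying pre-Lie to the triple $(v,a,b)$ expresses $v\cdot z$ as an element of $A\cdot A^{[3]}+A^{[3]}\cdot A$, reducing it to the previous cases. Hence $A^{[6]}=A^{[3]}\cdot A^{[3]}$.

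Finally, since $A^{[3]}\cdot A^{[4]}=A^{[4]}\cdot A^{[3]}\subseteq A^{[7]}=0$, products in $A^{[3]}\cdot A^{[3]}$ depend only on residues modulo $A^{[4]}$, and the four candidates arise from pairing the lifts $x^{2}\cdot x$ and $x\cdot x^{2}$ of $A^{[3]}/A^{[4]}$. Invoking the pre-Lie identity once more with $u\in\{x^{2}\cdot x,\,x\cdot x^{2}\}$, $v=x$, $w=x^{2}$ kills $u\cdot(x\cdot x^{2})$: the terms $(u\cdot v)\cdot w$ and $(v\cdot u)\cdot w$ lie in $A^{[4]}\cdot A^{[2]}=0$, and $v\cdot(u\cdot w)\in A\cdot A^{[5]}=0$. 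This removes two of the four candidates and leaves precisely $(x^{2}\cdot x)\cdot(x^{2}\cdot x)$ and $(x\cdot x^{2})\cdot(x^{2}\cdot x)$, as claimed. The main obstacle is the bookkeeping of pre-Lie expansions: one must always pick the triple $(a,b,c)$ so that three of the four cocycle-like terms land in an already-annihilated graded piece, leaving the desired reduction. This pattern is exactly the one used in Lemmas \ref{l1} and \ref{l2}.
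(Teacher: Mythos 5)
Your proposal is correct and follows essentially the same route as the paper: you identify $A^{[6]}=A^{[5]}$ from the strict chain, get $A\cdot A^{[5]}=A^{[5]}\cdot A=0$ from $A^{[7]}=0$, derive $A^{[4]}\cdot A^{[2]}=0$ and $A^{[3]}\cdot(A\cdot A^{[2]})=0$ from the pre-Lie identity, and reduce $A^{[6]}$ to products of the residues $x^2\cdot x$ and $x\cdot x^2$. The only cosmetic difference is that you absorb $A^{[2]}\cdot A^{[4]}$ into $A^{[3]}\cdot A^{[3]}$ by a uniform reduction, where the paper writes out the individual identities for $x^2\cdot\bigl((x^2\cdot x)\cdot x\bigr)$, $x^2\cdot\bigl(x\cdot(x^2\cdot x)\bigr)$, and $x^2\cdot(x^2\cdot x^2)$ explicitly.
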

    \begin{proof}
        Here $A^{[6]}=A^{[5]}\neq A^{[4]}\neq A^{[3]}\neq A^{[2]} \neq A$. Now $A\cdot A^{[5]}=A\cdot A^{[6]}\subset A^{[7]}=0$. Similarly $A^{[5]}\cdot A=0$. For $x\in A^{[4]},y,z\in A, x\cdot(y\cdot z)\in A^{[5]}\cdot A+A\cdot A^{[5]}=0$. Hence $A^{[4]}\cdot A^{[2]}=0$. Also if $x\in A^{[3]}, y\in A,z\in A^{[2]}, x\cdot(y\cdot z)\in A^{[4]}\cdot A^{[2]}+A\cdot A^{[5]}=0$. So $A^{[3]}\cdot(A\cdot A^{[2]})=0$. Consequently, $(x^2\cdot x)\cdot(x\cdot x^2)=(x\cdot x^2)\cdot(x\cdot x^2)=0$.\\ From \ref{im} we have 
        \begin{align*}
            &x^2\cdot\left((x^2\cdot x)\cdot x\right)=(x^2\cdot x)\cdot (x^2 \cdot x),~~~x^2\cdot\left((x\cdot x^2)\cdot x\right)=(x\cdot x^2)\cdot (x^2 \cdot x)\\
            &x^2\cdot \left(x\cdot(x \cdot x^2)\right)=(x^2\cdot x)\cdot(x\cdot x^2)-(x\cdot x^2)\cdot(x\cdot x^2)=0.\\
            &x^2\cdot \left(x\cdot(x^2 \cdot x)\right)=(x^2\cdot x)\cdot(x^2\cdot x)-(x\cdot x^2)\cdot(x^2\cdot x).\\
            &x^2\cdot(x^2\cdot x^2)=x^2\cdot\left((x^2\cdot x)\cdot x\right)-x^2\cdot\left(x\cdot(x^2\cdot x)\right)=(x^2\cdot x)\cdot (x^2 \cdot x)-(x\cdot x^2)\cdot (x^2 \cdot x).
        \end{align*}
        Hence $A^{[6]}$ is generated by $(x^2\cdot x)\cdot (x^2 \cdot x)$ and $(x\cdot x^2)\cdot (x^2 \cdot x)$.
    \end{proof}
    \begin{theo}
     Let $(A,+,\cdot)$ be a pre-Lie algebra with $(A,+)\cong C_{p}^5$ and $A$ is generated by a single element $x$. Moreover let $A^{[6]} \neq 0, A^{[7]} =0$ and $A^{[5]}\neq A^{[4]}$. Then we can say the following
     \begin{itemize}
         \item[1.] Elements $x,x^2,x^2\cdot x,(x^2\cdot x)\cdot x, x^2\cdot\left((x^2\cdot x)\cdot x\right)$ form a basis of the pre-Lie algebra $A$ as a $\mathbb{F}_p$ vector space.
         \item[2.] $x\cdot x^2 \in A^{[4]}$.
         \item[3.] $x\cdot(x\cdot x^2),(x\cdot x^2)\cdot x, x\cdot(x^2\cdot x)-(x^2\cdot x)\cdot x\in A^{[5]}$.
         \item[4.]  In $A^{[5]}$ \begin{align*}
         x\cdot\left((x^2\cdot x)\cdot x\right)=x\cdot\left( x\cdot (x^2\cdot x)\right)&,~~\left((x^2\cdot x)\cdot x\right)\cdot x=\left(x \cdot(x^2\cdot x)\right)\cdot x. \\
         x\cdot\left((x\cdot x^2)\cdot x\right)=x\cdot\left( x\cdot (x\cdot x^2)\right)&=~\left((x\cdot x^2)\cdot x\right)\cdot x=\left(x \cdot(x\cdot x^2)\right)\cdot x=0.\\
         (x^2\cdot x^2)\cdot x=2~\left((x^2\cdot x)\cdot x\right)\cdot x&, x\cdot (x^2\cdot x^2)=2~ x\cdot\left((x^2\cdot x)\cdot x\right).\\
         x^2\cdot(x\cdot x^2)&=(x^2\cdot x)\cdot x^2+x\cdot(x^2\cdot x^2).
        \end{align*}
        \item[5.] $A^{[6]}=A^{[5]}=\mathbb{F}_p~x^2\cdot\left((x^2\cdot x)\cdot x\right)$, $A^{[4]}/A^{[5]}=\mathbb{F}_p~(x^2\cdot x)\cdot x$, $A^{[3]}/A^{[4]}=\mathbb{F}_p~(x^2\cdot x)$.
     \end{itemize}
     \end{theo}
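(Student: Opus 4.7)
My plan is to exploit the one-dimensional chain structure forced by the hypotheses, together with the vanishing results of Theorem \ref{im3}, and to pin down each quotient using the pre-Lie identity. Since $A^{[6]}\neq 0$, $A^{[7]}=0$, and $A^{[5]}\neq A^{[4]}$, Theorem \ref{im3} gives $A^{[5]}=A^{[6]}\neq 0$, and the case analysis from Theorem \ref{nil} confirms that the chain $A\supsetneq A^{[2]}\supsetneq A^{[3]}\supsetneq A^{[4]}\supsetneq A^{[5]}\neq 0$ is strict. Together with $\dim_{\mathbb{F}_p}A=5$, this forces each successive quotient to be one-dimensional; in particular $A^{[5]}$ is one-dimensional.

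To identify the generator of $A^{[3]}/A^{[4]}$, note that since $A$ is generated by $x$, this image is spanned by $x\cdot x^2$ and $x^2\cdot x$. If $x^2\cdot x$ were in $A^{[4]}$, the two generators of $A^{[6]}$ supplied by Theorem \ref{im3}, namely $(x^2\cdot x)\cdot(x^2\cdot x)$ and $(x\cdot x^2)\cdot(x^2\cdot x)$, would lie respectively in $A^{[4]}\cdot A^{[4]}\subset A^{[8]}=0$ and $A^{[3]}\cdot A^{[4]}\subset A^{[7]}=0$, giving $A^{[6]}=0$, a contradiction. Hence $x^2\cdot x\notin A^{[4]}$ and $A^{[3]}/A^{[4]}=\mathbb{F}_p\,(x^2\cdot x)$. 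Write $x\cdot x^2=\alpha(x^2\cdot x)+y$ with $\alpha\in\mathbb{F}_p$ and $y\in A^{[4]}$, and set $g:=(x^2\cdot x)\cdot(x^2\cdot x)$.

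The decisive step, and the main obstacle, is to force $\alpha=0$. By Theorem \ref{im3}, the product $(x^2\cdot x)\cdot(x\cdot x^2)$ vanishes (it lies in the subspace $A^{[3]}\cdot(A\cdot A^{[2]})$ that Theorem \ref{im3} shows is zero). Using $(x^2\cdot x)\cdot y\in A^{[3]}\cdot A^{[4]}\subset A^{[7]}=0$, we compute
\[0=(x^2\cdot x)\cdot(x\cdot x^2)=\alpha g+(x^2\cdot x)\cdot y=\alpha g.\]
The analogous expansion yields $(x\cdot x^2)\cdot(x^2\cdot x)=\alpha g$, so Theorem \ref{im3}'s description of $A^{[6]}$ reduces to $A^{[6]}=\mathbb{F}_p g$; since $A^{[6]}\neq 0$, necessarily $g\neq 0$, and $\alpha g=0$ then forces $\alpha=0$. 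This proves (2) and also yields $A^{[6]}=A^{[5]}=\mathbb{F}_p g$.

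The remaining items follow routinely. From $x\cdot x^2\in A^{[4]}$ we see that $x\cdot(x\cdot x^2)$ and $(x\cdot x^2)\cdot x$ lie in $A\cdot A^{[4]}+A^{[4]}\cdot A\subset A^{[5]}$. Theorem \ref{im3} gives $x^2\cdot((x^2\cdot x)\cdot x)=g\neq 0$, so $(x^2\cdot x)\cdot x\notin A^{[5]}$ (else the left side would lie in $A^{[2]}\cdot A^{[5]}=0$), and hence $(x^2\cdot x)\cdot x$ generates $A^{[4]}/A^{[5]}$. Since $(x\cdot x^2)\cdot(x^2\cdot x)\in A^{[4]}\cdot A^{[3]}\subset A^{[7]}=0$, Theorem \ref{im3} also yields $x^2\cdot(x\cdot(x^2\cdot x))=g$; writing $x\cdot(x^2\cdot x)-(x^2\cdot x)\cdot x\equiv\gamma(x^2\cdot x)\cdot x\pmod{A^{[5]}}$ and applying $x^2\cdot$ gives $\gamma g=0$, whence $\gamma=0$, establishing (3). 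Combined with the pre-Lie identity $(x\cdot x^2)\cdot x-x\cdot(x^2\cdot x)=(x^2\cdot x)\cdot x-x^2\cdot x^2$, this also yields $x^2\cdot x^2\equiv 2(x^2\cdot x)\cdot x\pmod{A^{[5]}}$. The remaining identities in (4)---such as $(x^2\cdot x^2)\cdot x=2\bigl((x^2\cdot x)\cdot x\bigr)\cdot x$ and $x^2\cdot(x\cdot x^2)=(x^2\cdot x)\cdot x^2+x\cdot(x^2\cdot x^2)$---follow from the pre-Lie identity together with the vanishings $A^{[4]}\cdot A^{[2]}=0$ and $A\cdot A^{[5]}=A^{[5]}\cdot A=0$ from Theorem \ref{im3}, while the basis in (1) and the identifications in (5) are immediate from the one-dimensionality of the quotients.
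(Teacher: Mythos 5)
Your proposal is correct and follows essentially the same route as the paper: both rest on the vanishing results and the two generators $(x^2\cdot x)\cdot(x^2\cdot x)$, $(x\cdot x^2)\cdot(x^2\cdot x)$ of $A^{[6]}$ from Theorem \ref{im3}, first excluding $x^2\cdot x\in A^{[4]}$, then forcing $\alpha=0$ in $x\cdot x^2-\alpha\,x^2\cdot x\in A^{[4]}$, and finally reading off the remaining identities from the pre-Lie relation and the one-dimensional quotients. Your derivation of $\alpha=0$ (computing $\alpha g=0$ directly and noting $A^{[6]}=\mathbb{F}_p g\neq 0$) is a slightly cleaner packaging of the paper's two-case argument, but it is the same idea.
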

     \begin{proof}
         From Theorem \ref{im3}, we have $\dime_{\mathbb{F}_p}A^{[3]}/A^{[4]}=1$. Note $x^2\cdot x\notin A^{[4]}$. Otherwise $(x^2\cdot x)\cdot (x^2 \cdot x)=(x\cdot x^2)\cdot (x^2 \cdot x)=0$ implying $A^{[6]}=0$.\\
         So $A^{[3]}/A^{[4]}=\mathbb{F}_p~x^2\cdot x$. Hence there exists $\alpha\in \mathbb{F}_p$ such that \[x\cdot x^2-\alpha~x^2\cdot x \in A^{[4]}.\] This implies 
         \[(x^2\cdot x)\cdot(x\cdot x^2)=\alpha~(x^2\cdot x)\cdot(x^2\cdot x) \]
         As $(x^2\cdot x)\cdot(x\cdot x^2)=0$, either $\alpha=0$ or $(x^2\cdot x)\cdot(x^2\cdot x)=0$. If $\alpha \neq 0$, then \[0=(x\cdot x^2)\cdot(x\cdot x^2)=\alpha~(x\cdot x^2)\cdot(x^2\cdot x)\] implies $(x\cdot x^2)\cdot(x^2\cdot x)=0$ contradicting the fact $A^{[6]}\neq 0$. Hence $\alpha=0$. So $x\cdot x^2 \in A^{[4]}$. From Theorem \ref{im3}, we have 
         \[x^2\cdot\left((x^2\cdot x)\cdot x\right)=x^2\cdot \left(x\cdot(x^2 \cdot x)\right)=x^2\cdot(x^2\cdot x^2)=(x^2\cdot x)\cdot (x^2 \cdot x).\]
         Note from Theorem \ref{im3}, we have $\dime_{\mathbb{F}_p}A^{[4]}/A^{[5]}=1$. $x\cdot(x^2\cdot x)\notin A^{[5]}$. Otherwise $(x^2\cdot x)\cdot(x^2\cdot x)=x^2\cdot \left(x\cdot(x^2 \cdot x)\right)\in A^{[7]}=0$. \\
        For some $\alpha\in \mathbb{F}_p$ we must have \[(x^2\cdot x)\cdot x-\alpha~x\cdot(x^2\cdot x)\in A^{[5]}. \] This implies 
        \[x^2\cdot\left((x^2\cdot x)\cdot x\right)=\alpha~x^2\cdot\left(x\cdot(x^2\cdot x)\right)\] implying $\alpha=1$.\\
        Now as $A^{[5]}\cdot A=A\cdot A^{[5]}=0$, we have 
        \begin{align*}
         x\cdot\left((x^2\cdot x)\cdot x\right)=x\cdot\left( x\cdot (x^2\cdot x)\right)&,~~\left((x^2\cdot x)\cdot x\right)\cdot x=\left(x \cdot(x^2\cdot x)\right)\cdot x. \\
         x\cdot\left((x\cdot x^2)\cdot x\right)=x\cdot\left( x\cdot (x\cdot x^2)\right)&=~\left((x\cdot x^2)\cdot x\right)\cdot x=\left(x \cdot(x\cdot x^2)\right)\cdot x=0.
        \end{align*}
        From \[(x^2\cdot x)\cdot x-x^2\cdot x^2=(x \cdot x^2)\cdot x -x \cdot (x^2\cdot x),\] we have 
        \[(x^2\cdot x^2)\cdot x=2~\left((x^2\cdot x)\cdot x\right)\cdot x, x\cdot (x^2\cdot x^2)=2~ x\cdot\left((x^2\cdot x)\cdot x\right).\]
        From $A^{[4]}\cdot A^{[2]}=0$, $(x\cdot x^2)\cdot x^2=0$. Also from \[(x^2\cdot x)\cdot x^2-x^2\cdot(x\cdot x^2)=(x \cdot x^2)\cdot x^2-x\cdot(x^2\cdot x^2)\] we have \[x^2\cdot(x\cdot x^2)=(x^2\cdot x)\cdot x^2+x\cdot(x^2\cdot x^2).\]
     \end{proof}
      \begin{theo}\label{im3}
        In pre-Lie algebra $A$ with $A^{[5]} = A^{[4]}$, we have
    $A^{[2]}\cdot A^{[4]}= A^{[4]} \cdot A^{[2]}=0$. Moreover $A^{[6]}$ is generated by elements $x \cdot \left(x^2\cdot (x^2 \cdot x)\right)$ and $\left(x^2\cdot (x^2 \cdot x)\right)\cdot x$. 
    \end{theo}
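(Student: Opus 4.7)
The plan is to reduce $A^{[6]}$ via its defining recursion, clear most components using $A^{[7]}=0$ together with the hypothesis $A^{[4]}=A^{[5]}$, absorb the leftover $A^{[3]}\cdot A^{[3]}$ contribution using the pre-Lie identity, and finally extract the named generators using the last paragraph of Lemma \ref{l2}. First I would note that any product $A^{[i]}\cdot A^{[j]}$ with $i+j\geq 7$ is contained in $A^{[i+j]}\subset A^{[7]}=0$. Combined with $A^{[4]}=A^{[5]}$ this gives
\[
A^{[2]}\cdot A^{[4]}=A^{[2]}\cdot A^{[5]}\subset A^{[7]}=0,\qquad A^{[4]}\cdot A^{[2]}=A^{[5]}\cdot A^{[2]}\subset A^{[7]}=0,
\]
yielding the first conclusion of the theorem, together with the companion vanishings $A\cdot A^{[6]}=A^{[6]}\cdot A=A^{[3]}\cdot A^{[4]}=A^{[4]}\cdot A^{[3]}=0$.

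Next I would expand $A^{[6]}=\sum_{i=1}^{5}A^{[i]}\cdot A^{[6-i]}$, discard the two zero terms just obtained, and write $A=\mathbb{F}_p x+A^{[2]}$ to reduce the outer pieces to $x\cdot A^{[5]}$ and $A^{[5]}\cdot x$ (using $A^{[2]}\cdot A^{[5]}=A^{[5]}\cdot A^{[2]}=0$). To absorb the middle term, for $u\in A^{[3]}$ and a factorization $v=a\cdot b$ of a generator of $A^{[3]}=A\cdot A^{[2]}+A^{[2]}\cdot A$, I would apply the pre-Lie identity
\[
u\cdot(a\cdot b)=(u\cdot a)\cdot b-(a\cdot u)\cdot b+a\cdot(u\cdot b).
\]
When $a\in A,\ b\in A^{[2]}$ the first two terms lie in $A^{[4]}\cdot A^{[2]}=0$ while $a\cdot(u\cdot b)\in A\cdot A^{[5]}$; in the opposite weighting the first two terms belong to $A^{[5]}\cdot A$ while $a\cdot(u\cdot b)\in A^{[2]}\cdot A^{[4]}=0$. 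Either way $u\cdot v\in x\cdot A^{[5]}+A^{[5]}\cdot x$, so $A^{[6]}=x\cdot A^{[5]}+A^{[5]}\cdot x$.

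Finally I would invoke the concluding observation of Lemma \ref{l2}, which states that modulo $A^{[6]}$ the space $A^{[5]}$ is spanned by $x^2\cdot(x^2\cdot x)$; its proof uses only $A^{[4]}=A^{[5]}$ and the pre-Lie axiom and so applies verbatim here. Writing $w\in A^{[5]}$ as $\alpha(x^2\cdot(x^2\cdot x))+w'$ with $w'\in A^{[6]}$ and invoking $A\cdot A^{[6]}=0$, one gets $x\cdot w=\alpha\, x\cdot(x^2\cdot(x^2\cdot x))$, hence $x\cdot A^{[5]}=\mathbb{F}_p\,x\cdot(x^2\cdot(x^2\cdot x))$; the symmetric argument with $A^{[6]}\cdot A=0$ gives $A^{[5]}\cdot x=\mathbb{F}_p\,(x^2\cdot(x^2\cdot x))\cdot x$, completing the proof. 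I expect the only delicate point to be the pre-Lie rewriting for $A^{[3]}\cdot A^{[3]}$, where one must choose the factorization of each $A^{[3]}$ generator so that one of the three resulting terms vanishes automatically and the other two end up in the allowed outer components; everything else is essentially bookkeeping with the bracket filtration.
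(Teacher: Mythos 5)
Your proposal is correct, and every step checks out: the vanishing of $A^{[2]}\cdot A^{[4]}$ and $A^{[4]}\cdot A^{[2]}$ is obtained exactly as in the paper, the pre-Lie rewriting $u\cdot(a\cdot b)=(u\cdot a)\cdot b-(a\cdot u)\cdot b+a\cdot(u\cdot b)$ does place $A^{[3]}\cdot A^{[3]}$ inside $x\cdot A^{[5]}+A^{[5]}\cdot x$ in both weightings, and you are right that the closing paragraph of Lemma \ref{l2} uses only $A^{[4]}=A^{[5]}$ and the filtration inclusions, so it transfers to the $A^{[7]}=0$ setting. Where you diverge from the paper is in the second half: the paper identifies the generators of $A^{[6]}$ monomial by monomial, invoking specific instances of the pre-Lie identity to show $(x^2\cdot x)\cdot(x\cdot x^2)=(x\cdot x^2)\cdot(x\cdot x^2)=(x\cdot x^2)\cdot(x^2\cdot x)=0$ and to derive the explicit relation $(x^2\cdot x)\cdot(x^2\cdot x)=-\left(x^2\cdot(x^2\cdot x)\right)\cdot x$, whereas you argue at the level of subspaces, collapsing $A^{[6]}$ to $x\cdot A^{[5]}+A^{[5]}\cdot x$ and then quoting $A^{[5]}=\mathbb{F}_p\,x^2\cdot(x^2\cdot x)+A^{[6]}$. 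Your route is shorter, more uniform, and avoids the case-by-case bookkeeping you flagged as the delicate point; the paper's route is more laborious but yields as by-products the individual identities (notably $(x^2\cdot x)\cdot(x^2\cdot x)=-\left(x^2\cdot(x^2\cdot x)\right)\cdot x$ and the vanishing of the other degree-six monomials) that are then reused in the theorem that follows, so if you adopt your argument you would need to re-derive those facts separately at that later point.
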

    \begin{proof}
        In $A^{[6]}$, $A^{[4]}\cdot A^{[2]}=A^{[5]}\cdot A^{[2]}\subset A^{[7]}=0$. Similarly $A^{[2]}\cdot A^{[4]}=0$.\\
        As $A^{[4]}=A^{[5]}$, $A\cdot(A^{[4]}\cdot A), (A^{[4]}\cdot A)\cdot A\subset A^{[7]}=0$. Also for $x\in A^{[3]},y ,z \in A, x\cdot(y\cdot z)\in A^{[4]}\cdot A+A\cdot A^{[4]}\subset A^{[6]}$. Hence $A\cdot(A^{[3]}\cdot A^{[2]})\subset A^{[7]}=0$.
        Similarly $(A^{[3]}\cdot A^{[2]})\cdot A=0$. Now for $x\in A^{[2]},y\in A,z\in A^{[2]}$, from \ref{im}, $x\cdot(y\cdot z)\in A^{[3]}\cdot A^{[2]}+A\cdot A^{[4]}\subset A^{[6]}$. Hence $x^2\cdot(x\cdot x^2)\in A^{[6]}$ implying $x\cdot\left(x^2\cdot(x\cdot x^2)\right)=\left(x^2\cdot(x\cdot x^2)\right)\cdot x=0$. \\
        Now from \ref{im}, we have
        \[\left((x^2\cdot x)\cdot x\right)\cdot x^2-(x^2\cdot x)\cdot(x\cdot x^2)=\left(x\cdot(x^2\cdot x)\right)\cdot x^2-x\cdot\left((x^2\cdot x)\cdot x^2\right).\] This implies $(x^2\cdot x)\cdot(x\cdot x^2)=0$ as $\left((x^2\cdot x)\cdot x\right)\cdot x^2,\left(x\cdot(x^2\cdot x)\right)\cdot x^2 \in A^{[4]}\cdot A^{[2]}=0$ and $x\cdot\left((x^2\cdot x)\cdot x^2\right)\in A\cdot(A^{[3]}\cdot A^{[2]})=0.$ Similarly $(x\cdot x^2)\cdot(x \cdot x^2)=(x\cdot x^2)\cdot(x^2\cdot x)=0$. \\
        Again from \ref{im}, $(x^2\cdot x)\cdot(x^2\cdot x)=-\left(x^2\cdot(x^2\cdot x)\right)\cdot x$. Hence the result.
    
    \end{proof}
    \begin{theo}
       Let $(A,+,\cdot)$ be a pre-Lie algebra with $(A,+)\cong C_{p}^5$ and $A$ is generated by a single element $x$. Moreover let $A^{[6]} \neq 0, A^{[7]} =0$ and $A^{[5]}= A^{[4]}$. Then we can say the following 
       \begin{itemize}
           \item[1.] Elements $x,x^2,x^2\cdot x,x^2\cdot (x^2\cdot x), \left(x^2\cdot(x^2\cdot x)\right)\cdot x$ form a basis of the pre-Lie algebra $A$ as a $\mathbb{F}_p$ vector space.
         \item[2.] $x\cdot x^2 \in A^{[4]}$. 
         \item[3.] $x\cdot(x\cdot x^2),(x\cdot x^2)\cdot x \in A^{[6]}$.
         \item[4.]  $x\cdot\left(x\cdot(x\cdot x^2)\right)=\left(x\cdot(x\cdot x^2)\right)\cdot x=x^2\cdot (x\cdot x^2)= (x\cdot x^2)\cdot x^2=0$.\\  $x\cdot\left(x\cdot (x^2\cdot x)\right),\left(x\cdot (x^2\cdot x)\right)\cdot x,~x\cdot\left((x^2\cdot x)\cdot x\right),\left((x^2\cdot x)\cdot x\right)\cdot x, (x^2\cdot x)\cdot x^2 \in A^{[6]}$.
         \item[5.] $\left(x^2\cdot(x^2\cdot x)\right)\cdot x=x\cdot\left(x^2\cdot(x^2\cdot x)\right)=-(x^2\cdot x)\cdot(x^2\cdot x)$.
         \item[6.] $A^{[6]}=\mathbb{F}_p~(x^2\cdot x)\cdot(x^2\cdot x)$, $A^{[5]}/A^{[6]}=\mathbb{F}_p~x^2\cdot(x^2\cdot x), A^{[4]}/A^{[3]}=\mathbb{F}_p~x^2\cdot x$.
       \end{itemize}
    \end{theo}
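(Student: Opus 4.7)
The plan is to build on Theorem~\ref{im3}, which already tells us that $A^{[2]}\cdot A^{[4]}=A^{[4]}\cdot A^{[2]}=0$, that $A^{[6]}$ is spanned by $x\cdot\bigl(x^2\cdot(x^2\cdot x)\bigr)$ and $\bigl(x^2\cdot(x^2\cdot x)\bigr)\cdot x$, that the latter equals $-(x^2\cdot x)\cdot(x^2\cdot x)$, and that $(x^2\cdot x)\cdot(x\cdot x^2)=(x\cdot x^2)\cdot(x^2\cdot x)=(x\cdot x^2)\cdot(x\cdot x^2)=0$. First I would complement this by deriving the companion identity in (5): apply the pre-Lie relation with $a=x$, $b=x^2$, $c=x^2\cdot x$; the term $(x\cdot x^2)\cdot(x^2\cdot x)$ vanishes by Theorem~\ref{im3}, and the term $x^2\cdot\bigl(x\cdot(x^2\cdot x)\bigr)$ vanishes because $x\cdot(x^2\cdot x)\in A\cdot A^{[3]}\subset A^{[4]}=A^{[5]}$ and $A^{[2]}\cdot A^{[5]}=A^{[2]}\cdot A^{[4]}=0$. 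What remains gives $x\cdot\bigl(x^2\cdot(x^2\cdot x)\bigr)=-(x^2\cdot x)\cdot(x^2\cdot x)$, so $A^{[6]}=\mathbb{F}_p\,(x^2\cdot x)\cdot(x^2\cdot x)$, proving (5) and the first assertion of (6).

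Next I would prove $x^2\cdot x\notin A^{[4]}$: if $x^2\cdot x\in A^{[4]}$, then $x^2\cdot(x^2\cdot x)\in A^{[2]}\cdot A^{[4]}=0$, and substituting into (5) yields $(x^2\cdot x)\cdot(x^2\cdot x)=0$, contradicting $A^{[6]}\neq 0$. Hence $A^{[3]}/A^{[4]}=\mathbb{F}_p(x^2\cdot x)$, giving the third part of (6). For (2), write $x\cdot x^2-\alpha(x^2\cdot x)\in A^{[4]}$ for some $\alpha\in\mathbb{F}_p$. Multiplying on the left by $x^2\cdot x$ and using $(x^2\cdot x)\cdot A^{[4]}\subset A^{[3]}\cdot A^{[4]}\subset A^{[7]}=0$ together with $(x^2\cdot x)\cdot(x\cdot x^2)=0$ forces $\alpha(x^2\cdot x)\cdot(x^2\cdot x)=0$. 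Since $(x^2\cdot x)\cdot(x^2\cdot x)\neq 0$ as just shown, $\alpha=0$ and hence $x\cdot x^2\in A^{[4]}$.

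With $x\cdot x^2\in A^{[4]}=A^{[5]}$, part (3) is immediate from $A\cdot A^{[5]}\subset A^{[6]}$ and $A^{[5]}\cdot A\subset A^{[6]}$. For (4), the vanishing of $x\cdot\bigl(x\cdot(x\cdot x^2)\bigr)$ and $\bigl(x\cdot(x\cdot x^2)\bigr)\cdot x$ follows by ascending one more step into $A^{[7]}=0$ using (3); the vanishing of $x^2\cdot(x\cdot x^2)$ and $(x\cdot x^2)\cdot x^2$ follows directly from Theorem~\ref{im3}. The membership claims use that $x\cdot(x^2\cdot x)$ and $(x^2\cdot x)\cdot x$ lie in $A\cdot A^{[3]}\cup A^{[3]}\cdot A\subset A^{[4]}=A^{[5]}$, so their further products with $x$ lie in $A\cdot A^{[5]}\cup A^{[5]}\cdot A\subset A^{[6]}$. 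To handle the subtler $(x^2\cdot x)\cdot x^2\in A^{[6]}$, I would apply the pre-Lie identity with $a=x^2\cdot x$, $b=c=x$, rearranging to $(x^2\cdot x)\cdot x^2=\bigl((x^2\cdot x)\cdot x\bigr)\cdot x-\bigl(x\cdot(x^2\cdot x)\bigr)\cdot x+x\cdot\bigl((x^2\cdot x)\cdot x\bigr)$, each of whose three summands is already in $A^{[6]}$.

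Finally, the dimension tally assembles (6) and (1). The pieces $A/A^{[2]}=\mathbb{F}_p x$, $A^{[2]}/A^{[3]}=\mathbb{F}_p x^2$, $A^{[3]}/A^{[4]}=\mathbb{F}_p(x^2\cdot x)$, $A^{[4]}/A^{[5]}=0$, $A^{[5]}/A^{[6]}=\mathbb{F}_p\,x^2\cdot(x^2\cdot x)$ (by Lemma~\ref{l2}, since every other generator of $A^{[5]}$ has already been placed in $A^{[6]}$), and $A^{[6]}=\mathbb{F}_p\bigl(x^2\cdot(x^2\cdot x)\bigr)\cdot x$ (using the sign identity) together account for all five dimensions of $A\cong C_p^5$, so the listed elements must form a basis. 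The main obstacle I anticipate is the careful bookkeeping in (4): verifying $(x^2\cdot x)\cdot x^2\in A^{[6]}$ and confirming that no generator of $A^{[5]}$ other than $x^2\cdot(x^2\cdot x)$ escapes $A^{[6]}$ requires picking the right specialization of the pre-Lie identity at each step and systematically checking the consequences of $x\cdot x^2\in A^{[4]}$ in every triple product.
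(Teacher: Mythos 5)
Your proposal is correct and follows essentially the same route as the paper: use the auxiliary result that $A^{[2]}\cdot A^{[4]}=A^{[4]}\cdot A^{[2]}=0$ and that $A^{[6]}$ is spanned by $x\cdot\bigl(x^2\cdot(x^2\cdot x)\bigr)$ and $\bigl(x^2\cdot(x^2\cdot x)\bigr)\cdot x$, deduce $x^2\cdot x\notin A^{[4]}$ and then $x\cdot x^2\in A^{[4]}$ from $(x^2\cdot x)\cdot(x\cdot x^2)=0$, push everything else into $A^{[6]}$, and finish with the sign identity and a dimension count. If anything, your version is slightly tighter than the paper's: you derive $x\cdot\bigl(x^2\cdot(x^2\cdot x)\bigr)=-(x^2\cdot x)\cdot(x^2\cdot x)$ explicitly from the pre-Lie identity with $a=x$, $b=x^2$, $c=x^2\cdot x$ (the paper only asserts this), and you establish $(x^2\cdot x)\cdot(x^2\cdot x)\neq 0$ before using it to force $\alpha=0$.
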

    \begin{proof}
        Notice $\dime_{\mathbb{F}_p}A^{[3]}/A^{[4]}=1$. If $x^2\cdot x \in A^{[4]}$, then $A^{[6]}=0$, a contradiction. So there exists $\alpha \in \mathbb{F}_p$ such that $x\cdot x^2-\alpha~x^2\cdot x \in A^{[4]}$ holds. This implies $(x^2\cdot x)\cdot(x\cdot x^2)=\alpha~(x^2\cdot x)\cdot (x^2\cdot x)$. As $(x^2\cdot x)\cdot(x\cdot x^2)=0$, we have $\alpha=0$ implying $x\cdot x^2\in A^{[4]}$.\\
        As $A^{[4]}=A^{[5]}$, $x\cdot(x\cdot x^2),(x\cdot x^2)\cdot x \in A^{[6]}$.\\
        Also $x\cdot\left(x\cdot(x\cdot x^2)\right)\in A\cdot A^{[5]}=0$. Similarly $\left(x\cdot(x\cdot x^2)\right)\cdot x=0$. Notice $x\cdot (x^2\cdot x),(x^2\cdot x)\cdot x \in A^{[4]}=A^{[5]}$ implying $x\cdot\left(x\cdot (x^2\cdot x)\right),\left(x\cdot (x^2\cdot x)\right)\cdot x,~x\cdot\left((x^2\cdot x)\cdot x\right),\left((x^2\cdot x)\cdot x\right)\cdot x \in A^{[6]}$. As $x\cdot x^2\in A^{[4]}$, from Theorem \ref{im3}, $x^2\cdot (x\cdot x^2)= (x\cdot x^2)\cdot x^2=0$. Again it follows from Theorem \ref{im3}, $(x^2\cdot x)\cdot x^2 \in A^{[6]}$. So $x^2\cdot(x^2\cdot x)\in A^{[5]}\setminus A^{[6]}$.\\
       From \ref{im} we have 
       \[\left(x^2\cdot(x^2\cdot x)\right)\cdot x-x^2\cdot\left((x^2\cdot x)\cdot x\right)=\left((x^2\cdot x)\cdot x^2\right)\cdot x-(x^2\cdot x)\cdot(x^2\cdot x)\] implying $\left(x^2\cdot(x^2\cdot x)\right)\cdot x=-(x^2\cdot x)\cdot(x^2\cdot x)$. Hence from Theorem \ref{im3} \[\left(x^2\cdot(x^2\cdot x)\right)\cdot x=x\cdot\left(x^2\cdot(x^2\cdot x)\right)=-(x^2\cdot x)\cdot(x^2\cdot x).\] So $A^{[6]}=\mathbb{F}_p~(x^2\cdot x)\cdot(x^2\cdot x)$.
    \end{proof}
    \subsection{$A^{[6]}=0$ and $A^{[5]}\neq 0$.}
    \begin{theo}
       Let $(A,+,\cdot)$ be a pre-Lie algebra with $(A,+)\cong C_{p}^5$ and $A$ is generated by a single element $x$. Moreover let $A^{[5]} \neq 0, A^{[6]} =0$ and $A^{[5]} \neq  A^{[4]}$. Then we can say the following 
       \begin{itemize}
           \item[1.] $\{x,x^2,u,v,w\}$ forms a basis of $A$ as $\mathbb{F}_p$ vector space where $u \in \{x\cdot x^2,x^2\cdot x\},v=u\cdot x~\text{or}~x\cdot u$ and $w=l \cdot m$ or where $l \in \{u\cdot x, x\cdot u\}$ for some $u\in A^{[3]}$ and $m=x$ or vice-versa.
           \item[2.] For some $\alpha,\beta \in \mathbb{F}_p$, not both zero we have $\alpha~x^2\cdot x+\beta~x\cdot x^2\in A^{[4]}$.
           \item[3.] $\alpha~x\cdot(x^2\cdot x)+\beta~x\cdot(x\cdot x^2), \alpha~(x^2\cdot x)\cdot x+\beta~(x\cdot x^2)\cdot x\in A^{[5]}$.
           \item[4.] In $A^{[5]}$ we have 
           \begin{align*}
              \alpha\left(x\cdot(x^2\cdot x)\right)\cdot x+\beta\left(x\cdot(x\cdot x^2)\right)\cdot x=~&\alpha~x\cdot\left(x\cdot(x^2\cdot x)\right)+\beta~x\cdot\left(x\cdot(x\cdot x^2)\right)\cdot x=0,\\
              \alpha\left((x^2\cdot x)\cdot x\right)\cdot x+\beta\left((x\cdot x^2)\cdot x\right)\cdot x=~&\alpha~x\cdot\left((x^2\cdot x)\cdot x\right)+\beta~x\cdot\left((x\cdot x^2)\cdot x\right)=0,\\
              \alpha~(x^2\cdot x)\cdot x^2+\beta~(x\cdot x^2)\cdot x^2=&~\alpha~x^2\cdot(x^2\cdot x)+\beta~x^2\cdot(x\cdot x^2)=0.
           \end{align*}
           
       \end{itemize}
       \end{theo}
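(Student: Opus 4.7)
The plan is to read off the structural content of the statement from two facts: (i) the vanishing decomposition $A^{[6]}=\sum_{j=1}^{5}A^{[j]}\cdot A^{[6-j]}=0$, and (ii) the dimension count forced by left nilpotency. From the strict chain $A\supsetneq A^{[2]}\supsetneq A^{[3]}\supsetneq A^{[4]}\supsetneq A^{[5]}\supsetneq A^{[6]}=0$, each successive quotient $A^{[i]}/A^{[i+1]}$ has dimension exactly one, since $\dime_{\mathbb{F}_p}A=5$ and all inclusions are strict under the hypotheses $A^{[5]}\ne 0$ and $A^{[5]}\ne A^{[4]}$. Since every summand $A^{[j]}\cdot A^{[6-j]}$ lies inside $A^{[6]}=0$, I immediately obtain the full annihilation vocabulary needed:
\[A\cdot A^{[5]}=A^{[5]}\cdot A=A^{[2]}\cdot A^{[4]}=A^{[4]}\cdot A^{[2]}=A^{[3]}\cdot A^{[3]}=0.\]

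For Part 1 (basis), I would choose graded representatives level by level. Clearly $x$ spans $A/A^{[2]}$ and $x^2$ spans $A^{[2]}/A^{[3]}$. In $A^{[3]}/A^{[4]}$, the classes of $x\cdot x^2$ and $x^2\cdot x$ together generate, so at least one, say $u$, is a nonzero representative (otherwise $A^{[3]}=A^{[4]}$). In $A^{[4]}/A^{[5]}$, the pre-Lie identity applied to $(a,b,c)=(x,x^2,x)$ reduces $x^2\cdot x^2$ to a combination of products in $A\cdot A^{[3]}+A^{[3]}\cdot A$, so $A^{[4]}/A^{[5]}$ is spanned by the images of $\ell\cdot x$ and $x\cdot\ell$ for $\ell\in\{x^2\cdot x,\,x\cdot x^2\}$; one of these four gives the asserted $v$. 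Finally $A^{[5]}$ is one-dimensional and is generated by a single fifth-order product $w$ of the shape described.

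Parts 2--4 follow mechanically from the filtration together with the zero relations. Part 2 is just the statement that the images of $x\cdot x^2$ and $x^2\cdot x$ in the one-dimensional $A^{[3]}/A^{[4]}$ are linearly dependent. Part 3 is obtained by left- and right-multiplying the relation of Part 2 by $x$ and using $A\cdot A^{[4]}+A^{[4]}\cdot A\subseteq A^{[5]}$. For Part 4, I would left- and right-multiply the two relations of Part 3 by $x$, pushing them into $A\cdot A^{[5]}$ or $A^{[5]}\cdot A$, both of which vanish, and similarly left- and right-multiply the relation of Part 2 by $x^2$, landing in $A^{[2]}\cdot A^{[4]}$ or $A^{[4]}\cdot A^{[2]}$, again zero. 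Distributivity then turns each product of sums into the explicit linear combination stated in the theorem.

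The step I expect to be the main obstacle is the bookkeeping in Part 1, specifically verifying that the natural candidate $w$ spans the one-dimensional $A^{[5]}$. This requires tracking the ways a five-fold pre-Lie product can arise and reducing them, via the pre-Lie identity together with the annihilation facts above, to a single nonzero multiple of the chosen $w$; once Part 1 is in place, Parts 2--4 reduce to short manipulations using only distributivity and the five identities $A\cdot A^{[5]}=A^{[5]}\cdot A=A^{[2]}\cdot A^{[4]}=A^{[4]}\cdot A^{[2]}=A^{[3]}\cdot A^{[3]}=0$.
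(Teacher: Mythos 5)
Your proposal is correct and follows essentially the same route as the paper: use the strictness of the chain $A\supsetneq A^{[2]}\supsetneq\cdots\supsetneq A^{[5]}\supsetneq 0$ to force each quotient to be one-dimensional (giving Part 2), then multiply the resulting congruences by $x$ and $x^2$ and absorb the error terms into $A^{[j]}\cdot A^{[6-j]}\subseteq A^{[6]}=0$ to get Parts 3 and 4, choosing graded representatives for the basis in Part 1. You are in fact somewhat more explicit than the paper about the annihilation identities $A\cdot A^{[5]}=A^{[5]}\cdot A=A^{[2]}\cdot A^{[4]}=A^{[4]}\cdot A^{[2]}=A^{[3]}\cdot A^{[3]}=0$ and about the pre-Lie reduction of $x^2\cdot x^2$, which the paper leaves implicit.
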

       \begin{proof}
           Here we have $A^{[5]}\neq A^{[4]}\neq A^{[3]}\neq A^{[2]}\neq A$. Since $A^{[3]}/A^{[4]}$ is one dimensional vector space over $\mathbb{F}_p$ for some $\alpha,\beta \in \mathbb{F}_p$, not both zero we have $\alpha~x^2\cdot x+\beta~x\cdot x^2\in A^{[4]}$. Consequently in $A^{[4]}$ we have 
          \[\alpha~x\cdot(x^2\cdot x)+\beta~x\cdot(x\cdot x^2), \alpha~(x^2\cdot x)\cdot x+\beta~(x\cdot x^2)\cdot x\in A^{[5]}.\]
           Similarly in $A^{[5]}$ we have 
           \begin{align*}
              \alpha~\left(x\cdot(x^2\cdot x)\right)\cdot x+\beta~\left(x\cdot(x\cdot x^2)\right)\cdot x=~&\alpha~x\cdot\left(x\cdot(x^2\cdot x)\right)+\beta~x\cdot\left(x\cdot(x\cdot x^2)\right)\cdot x=0,\\
              \alpha~\left((x^2\cdot x)\cdot x\right)\cdot x+\beta~\left((x\cdot x^2)\cdot x\right)\cdot x=~&\alpha~x\cdot\left((x^2\cdot x)\cdot x\right)+\beta~x\cdot\left((x\cdot x^2)\cdot x\right)=0,\\
              \alpha~(x^2\cdot x)\cdot x^2+\beta~(x\cdot x^2)\cdot x^2=&~\alpha~x^2\cdot(x^2\cdot x)+\beta~x^2\cdot(x\cdot x^2)=0.
           \end{align*}
           Take any non zero product of some copies of products of $x$ from $A^{[5]}$, say $w$. Then $w=l \cdot m$ or where $l \in \{u\cdot x, x\cdot u\}$ for some $u\in A^{[3]}$ and $m=x$ or vice-versa. Note that $u\cdot x \in A^{[4]}\setminus A^{[5]}$ and $u\in A^{[3]}\setminus A^{[4]}$. So $\{x,x^2,u,v,w\}$ forms a basis of $A$ as $\mathbb{F}_p$ vector space.
           
       \end{proof}
       \begin{theo}
           Let $(A,+,\cdot)$ be a pre-Lie algebra with $(A,+)\cong C_{p}^5$ and $A$ is generated by a single element $x$. Moreover let $A^{[5]} \neq 0, A^{[6]} =0$ and $A^{[5]}=  A^{[4]}$. Then we can say the following 
           \begin{itemize}
               \item[1.] $\{x,x^2,x^2\cdot x,x\cdot x^2,x^2\cdot(x^2\cdot x)\}$  forms a basis of $A$ as $\mathbb{F}_p$ vector space.
               \item[2.] $A^{[3]}/A^{[4]}=\mathbb{F}_p~x\cdot x^2+\mathbb{F}_p~x^2\cdot x$, $A^{[4]}=A^{[5]}=\mathbb{F}_p~x^2\cdot(x^2\cdot x)$.
               \item[3.] The only non-zero product of $5$ or more elements in $A$ is $x^2\cdot(x^2\cdot x)$.
           \end{itemize}
       \end{theo}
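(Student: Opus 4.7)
The plan is to proceed by a dimensional-counting argument on the descending filtration $A \supseteq A^{[2]} \supseteq A^{[3]} \supseteq A^{[4]} = A^{[5]} \supseteq A^{[6]} = 0$, combined with an explicit computation of $A^{[5]}$ via Theorem~\ref{im3} and the pre-Lie identity.

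First I would set up the dimensional bounds. Since $A^{[5]} \neq 0$, the degenerate collapses in Case II of Theorem~\ref{nil} are ruled out, so the strict containments $A \supsetneq A^{[2]} \supsetneq A^{[3]} \supsetneq A^{[4]}$ hold. The quotient $A/A^{[2]}$ is generated by the image of $x$ and $A^{[2]}/A^{[3]}$ is generated by the image of $x^2$, so each of these two quotients is exactly one-dimensional. In the same way, $A^{[3]}/A^{[4]}$ is spanned by the images of $x\cdot x^2$ and $x^2\cdot x$, giving $\dim A^{[3]}/A^{[4]} \le 2$.

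Next I would compute $A^{[5]}$ explicitly. Because $A^{[4]} = A^{[5]}$ and $A^{[6]} = 0$, we have $A\cdot A^{[4]} = A\cdot A^{[5]} \subseteq A^{[6]} = 0$, and similarly $A^{[4]}\cdot A = 0$; of course also $A^{[3]}\cdot A^{[3]} \subseteq A^{[6]} = 0$. This collapses $A^{[5]}$ to the $\mathbb{F}_p$-span of products of the form $x^2\cdot u$ and $u\cdot x^2$ with $u \in \{x\cdot x^2,\, x^2\cdot x\}$. Applying the pre-Lie identity with $a = x,\, b = x^2\cdot x,\, c = x$ (and again with $b = x\cdot x^2$), and repeatedly using $A\cdot A^{[4]} = A^{[4]}\cdot A = 0$, one obtains $(x^2\cdot x)\cdot x^2 = (x\cdot x^2)\cdot x^2 = 0$. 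A further application of the identity with $a = x^2,\, b = x,\, c = x^2$ expresses $x^2\cdot(x\cdot x^2)$ as a combination of these two vanishing products, forcing $x^2\cdot(x\cdot x^2) = 0$. The only surviving generator is $x^2\cdot(x^2\cdot x)$, so $A^{[5]} = \mathbb{F}_p\, x^2\cdot(x^2\cdot x)$; since $A^{[5]} \neq 0$, this element is nonzero and $\dim A^{[4]} = \dim A^{[5]} = 1$.

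The total dimension equation $5 = \dim A = 1 + 1 + \dim A^{[3]}/A^{[4]} + 0 + 1$ then forces $\dim A^{[3]}/A^{[4]} = 2$, so $x\cdot x^2$ and $x^2\cdot x$ are $\mathbb{F}_p$-independent modulo $A^{[4]}$, yielding item~(2) together with the basis in item~(1). Item~(3) is then immediate from the shape of the filtration: any product of five or more factors of $x$ lies in $A^{[5]} = \mathbb{F}_p\, x^2\cdot(x^2\cdot x)$, and any product of six or more factors lies in $A^{[6]} = 0$. The main technical hurdle is the trio of vanishings $(x^2\cdot x)\cdot x^2 = (x\cdot x^2)\cdot x^2 = x^2\cdot(x\cdot x^2) = 0$, which require picking the right specializations of the pre-Lie identity and iterating the hypothesis-driven identities $A\cdot A^{[4]} = A^{[4]}\cdot A = 0$ coming from $A^{[4]} = A^{[5]}$ and $A^{[6]} = 0$.
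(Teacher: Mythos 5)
Your proposal is correct and follows essentially the same route as the paper: both derive $A\cdot A^{[4]}=A^{[4]}\cdot A=0$ from $A^{[4]}=A^{[5]}$ and $A^{[6]}=0$, then use the pre-Lie identity to kill $A^{[3]}\cdot A^{[2]}$ and $A^{[2]}\cdot(A\cdot A^{[2]})$, leaving $x^2\cdot(x^2\cdot x)$ as the only surviving product of five factors, and finish by dimension counting along the filtration. Your write-up just makes explicit the specializations of the identity that the paper's one-line deduction leaves implicit.
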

       \begin{proof}
           Using \ref{im}, we deduce $A\cdot A^{[4]}=A^{[4]}\cdot A=A^{[3]}\cdot A^{[2]}=A^{[2]}\cdot(A\cdot A^{[2]})=0$. So the only non zero product of $x$ in $A^{[5]}$ is $x^2\cdot(x^2\cdot x)$. So $A^{[4]}=A^{[5]}=\mathbb{F}_p~x^2\cdot(x^2\cdot x)$. It is clear that $A^{[3]}/A^{[4]}$ is two dimensional $\mathbb{F}_p$ vector space and hence $A^{[3]}/A^{[4]}=\mathbb{F}_p~x\cdot x^2+\mathbb{F}_p~x^2\cdot x$. So clearly $\{x,x^2,x^2\cdot x,x\cdot x^2,x^2\cdot(x^2\cdot x)\}$  forms a basis of $A$ as $\mathbb{F}_p$ vector space.
       \end{proof}
       \subsection{$A^{[5]}=0$ and $A^{[4]}\neq 0$.}
       \begin{theo}
           Let $(A,+,\cdot)$ be a pre-Lie algebra with $(A,+)\cong C_{p}^5$ and $A$ is generated by a single element $x$. Moreover let $A^{[4]} \neq 0, A^{[5]} =0$ and $\dime_{\mathbb{F}_p} A^{[4]}=1$. Then we can say the following
           \begin{itemize}
               \item[1.] $\{x,x^2,x^2\cdot x,x\cdot x^2,a\}$  forms a basis of $A$ as $\mathbb{F}_p$ vector space where $a=u\cdot x~\text{or}~x\cdot u$ and $u\in\{x^2\cdot x,x\cdot x^2\}$.
               \item[2.] $\dime_{\mathbb{F}_p}A^{[3]}/A^{[4]}=2$. 
           \end{itemize}
       \end{theo}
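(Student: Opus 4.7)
The plan is to proceed by a dimension count along the filtration
\[A \supset A^{[2]} \supset A^{[3]} \supset A^{[4]} \supset A^{[5]} = 0,\]
identifying each graded piece with the span of the images of products of $x$'s of fixed length. Since $A$ is generated as a pre-Lie algebra by the single element $x$, I will first observe that $A/A^{[2]}$ is spanned by the class of $x$ (dimension at most $1$), $A^{[2]}/A^{[3]}$ by the class of $x^2$ (dimension at most $1$), and $A^{[3]}/A^{[4]}$ by the two length-three products $x^2\cdot x$ and $x\cdot x^2$ (dimension at most $2$). None of these quotients can vanish, since e.g. $A = A^{[2]}$ would propagate up the filtration and contradict nilpotency of $A$.

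Combining these bounds with the hypotheses $\dime_{\mathbb{F}_p} A = 5$ and $\dime_{\mathbb{F}_p} A^{[4]} = 1$, the identity $\dime_{\mathbb{F}_p} A^{[2]}/A^{[3]} + \dime_{\mathbb{F}_p} A^{[3]}/A^{[4]} = 3$ forces $\dime_{\mathbb{F}_p} A^{[2]}/A^{[3]} = 1$ and $\dime_{\mathbb{F}_p} A^{[3]}/A^{[4]} = 2$, which proves part $2$ and shows that $\{x^2\cdot x, x\cdot x^2\}$ descends to a basis of $A^{[3]}/A^{[4]}$.

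For part $1$ it remains to pick an explicit generator $a$ of $A^{[4]}$. Since $A^{[5]}=0$, $A^{[4]}$ is spanned by products of exactly four $x$'s, of which there are five: $(x^2\cdot x)\cdot x$, $(x\cdot x^2)\cdot x$, $x\cdot(x^2\cdot x)$, $x\cdot(x\cdot x^2)$, and $x^2\cdot x^2$. Applying the pre-Lie identity \ref{im} to the triple $(x^2, x, x)$ will give
\[x^2\cdot x^2 = (x^2\cdot x)\cdot x - (x\cdot x^2)\cdot x + x\cdot(x^2\cdot x),\]
which shows that $x^2\cdot x^2$ lies in the $\mathbb{F}_p$-span of the other four products. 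Since $A^{[4]} \neq 0$, at least one of those four length-four products must be non-zero; I will select such an element as $a$, which then automatically has the prescribed form $u\cdot x$ or $x\cdot u$ with $u \in \{x^2\cdot x, x\cdot x^2\}$ and spans the one-dimensional $A^{[4]}$. Lifting through the filtration then assembles $\{x, x^2, x^2\cdot x, x\cdot x^2, a\}$ into an $\mathbb{F}_p$-basis of $A$, completing part $1$.

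The argument is essentially dimension counting plus a single application of the pre-Lie identity to rewrite $x^2\cdot x^2$; I do not anticipate any significant obstacle.
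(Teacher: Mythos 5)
Your proposal is correct and follows essentially the same dimension-counting argument as the paper: the bounds $\dime_{\mathbb{F}_p}A^{[2]}/A^{[3]}\le 1$ and $\dime_{\mathbb{F}_p}A^{[3]}/A^{[4]}\le 2$ together with $\dime_{\mathbb{F}_p}A=5$ and $\dime_{\mathbb{F}_p}A^{[4]}=1$ force both quotients to attain their maximal dimensions, and a non-zero length-four product is then chosen to span $A^{[4]}$. Your extra step invoking the pre-Lie identity on $(x^2,x,x)$ to eliminate $x^2\cdot x^2$ from the list of candidate spanning elements is a detail the paper leaves implicit, and it is a worthwhile addition since it justifies why the generator $a$ can always be taken of the form $u\cdot x$ or $x\cdot u$.
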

       \begin{proof}
           Here we have $A^{[4]}\neq A^{[3]}\neq A^{[2]}\neq A$. As $\dime_{\mathbb{F}_p} A^{[4]}=1$, we must have $\dime_{\mathbb{F}_p}A^{[3]}/A^{[4]}=2$. So $x^2\cdot x$ and $x\cdot x^2$ must form a basis of $A^{[3]}/A^{[4]}$. Now take any non zero product of copies of $x$ from $A^{[4]}$, say $a$. Note $a$ can be of the form $u\cdot x$ or $x\cdot u$ where $u=x\cdot x^2~\text{or}~x^2\cdot x$. Hence $\{x,x^2,x^2\cdot x,x\cdot x^2,a\}$  forms a basis of $A$.
       \end{proof}
          \begin{theo}
           Let $(A,+,\cdot)$ be a pre-Lie algebra with $(A,+)\cong C_{p}^5$ and $A$ is generated by a single element $x$. Moreover let $A^{[4]} \neq 0, A^{[5]} =0$ and $\dime_{\mathbb{F}_p} A^{[4]}=2$. Then we can say the following
           \begin{itemize}
               \item[1.] $\{x,x^2,u,x\cdot u,u\cdot x\}$  forms a basis of $A$ as $\mathbb{F}_p$ vector space where $u\in\{x^2\cdot x,x\cdot x^2\}$.
               \item[2.]  For some $\alpha,\beta \in \mathbb{F}_p$, not both zero we have $\alpha~x^2\cdot x+\beta~x\cdot x^2\in A^{[4]}$.
               \item[3.] $\alpha~x\cdot(x^2\cdot x)+\beta~x\cdot(x\cdot x^2)= \alpha~(x^2\cdot x)\cdot x+\beta~(x\cdot x^2)\cdot x=0$.
                \end{itemize}
       \end{theo}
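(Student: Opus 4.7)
The plan is to determine the dimensions of the quotients in the chain $A\supsetneq A^{[2]}\supsetneq A^{[3]}\supsetneq A^{[4]}\supsetneq A^{[5]}=0$ and then extract the relations in items 2 and 3 by multiplying on either side by $x$. Since $A$ is generated by a single element $x$ as a pre-Lie algebra, $A/A^{[2]}=\mathbb{F}_p\,x$ and $A^{[2]}/A^{[3]}=\mathbb{F}_p\,x^2$ are one-dimensional, while $A^{[3]}/A^{[4]}$ is spanned by the two elements $x^2\cdot x$ and $x\cdot x^2$. Combined with $\dim A=5$ and $\dim A^{[4]}=2$, this forces $\dim A^{[3]}/A^{[4]}=1$, so $x^2\cdot x$ and $x\cdot x^2$ are linearly dependent modulo $A^{[4]}$. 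That yields item 2 with some pair $(\alpha,\beta)\in\mathbb{F}_p^2\setminus\{(0,0)\}$ satisfying $\alpha\,x^2\cdot x+\beta\,x\cdot x^2\in A^{[4]}$.

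Item 3 follows immediately by multiplying this relation by $x$ on either side: since $A\cdot A^{[4]},\,A^{[4]}\cdot A\subseteq A^{[5]}=0$, both $x\cdot(\alpha\,x^2\cdot x+\beta\,x\cdot x^2)$ and $(\alpha\,x^2\cdot x+\beta\,x\cdot x^2)\cdot x$ vanish, and distributing produces the two identities of item 3.

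For item 1, I would choose $u\in\{x^2\cdot x,\,x\cdot x^2\}\setminus A^{[4]}$ (which exists because $A^{[3]}/A^{[4]}\ne 0$), so that $\{x,x^2,u\}$ lifts to a basis of $A/A^{[4]}$. It remains to show that $x\cdot u$ and $u\cdot x$ form a basis of the two-dimensional space $A^{[4]}$. A~priori $A^{[4]}$ is spanned by the five four-fold products $x\cdot(x\cdot x^2)$, $x\cdot(x^2\cdot x)$, $(x\cdot x^2)\cdot x$, $(x^2\cdot x)\cdot x$, $x^2\cdot x^2$, and the pre-Lie identity applied to $(a,b,c)=(x^2,x,x)$ gives
\[
x^2\cdot x^2=(x^2\cdot x)\cdot x-(x\cdot x^2)\cdot x+x\cdot(x^2\cdot x),
\]
so $x^2\cdot x^2$ is redundant. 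Writing $u'$ for the remaining element of $\{x^2\cdot x,\,x\cdot x^2\}$, there are two sub-cases: if $u'\in A^{[4]}$ then $x\cdot u',\,u'\cdot x\in A\cdot A^{[4]}+A^{[4]}\cdot A\subseteq A^{[5]}=0$; otherwise $u'\notin A^{[4]}$ forces both $\alpha$ and $\beta$ to be nonzero in item~2, and item~3 then expresses $x\cdot u'$ and $u'\cdot x$ as explicit scalar multiples of $x\cdot u$ and $u\cdot x$ respectively. In either sub-case $A^{[4]}=\mathbb{F}_p\,x\cdot u+\mathbb{F}_p\,u\cdot x$, and the hypothesis $\dim A^{[4]}=2$ then forces these two generators to be linearly independent.

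The only slightly delicate point is the bookkeeping in the last step, namely verifying that once $x^2\cdot x^2$ has been eliminated via the pre-Lie identity, \emph{both} sub-cases for the position of $u'$ really do collapse the four remaining four-fold products onto $\{x\cdot u,\,u\cdot x\}$. Once this is checked, everything else is a direct dimension count.
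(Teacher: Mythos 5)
Your proposal is correct and follows essentially the same route as the paper: the one-dimensionality of $A^{[3]}/A^{[4]}$ gives item 2, multiplying by $x$ and using $A\cdot A^{[4]}=A^{[4]}\cdot A\subseteq A^{[5]}=0$ gives item 3, and the pre-Lie identity for $(x^2,x,x)$ eliminates $x^2\cdot x^2$ so that $A^{[4]}$ is spanned by $x\cdot u$ and $u\cdot x$. The only (cosmetic) difference is that you argue directly that these two elements span the two-dimensional space $A^{[4]}$ and hence are independent, whereas the paper assumes a dependence $u\cdot x=\gamma\, x\cdot u$ and derives the contradiction $\dime_{\mathbb{F}_p}A^{[4]}=1$; your version is, if anything, slightly cleaner.
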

       \begin{proof}
           Since $\dime_{\mathbb{F}_p}A^{[3]}/A^{[4]}=1$, for some $\alpha,\beta \in \mathbb{F}_p$, not both zero we have $\alpha~x^2\cdot x+\beta~x\cdot x^2\in A^{[4]}$. Consequently,
           \[\alpha~x\cdot(x^2\cdot x)+\beta~x\cdot(x\cdot x^2)= \alpha~(x^2\cdot x)\cdot x+\beta~(x\cdot x^2)\cdot x=0\] One of $x\cdot x^2$ and $x^2\cdot x$ must be in $A^{[3]}\setminus A^{[4]}$. Take that element as $u$. Then we claim $u\cdot x$ and $x\cdot u$ are linearly independent in $A$. If not let $u\cdot x=\gamma x\cdot u$. Let $v\in \{x\cdot x^2,x^2\cdot x\}\setminus \{u\}$.\\
           As $A^{[3]}/ A^{[4]}$ has basis $\{u\}$ for some $\nu \in \mathbb{F}_p$ we have $v-\nu~u \in A^{[4]}$. So we have $v\cdot x=\nu~u\cdot x=\nu\gamma~x\cdot u$. Similarly $x\cdot v=\nu x\cdot u$. From 
           \[(x^2\cdot x)\cdot x-x^2\cdot x^2=(x\cdot x^2)\cdot x-x\cdot(x^2\cdot x)\] it is clear that $A^{[4]}=\mathbb{F}_p~x\cdot u$, a contradiction. Hence $\{x,x^2,u,x\cdot u,u\cdot x\}$  forms a basis of $A$ as $\mathbb{F}_p$ vector space.
       \end{proof}
\section{pre-Lie algebra $A$ with $(A,+)\cong C_p^5$ and generated by two elements as a pre-Lie algebra}\label{s1}
Let us start by commenting on the nilpotency index of $(A,+,\cdot)$ which is generated by $2$ elements as a pre-Lie algebra.
\begin{theo}\label{nil2}
    Let $A$ be generated by two elements as a pre-Lie algebra. Then $A^{[2]}\neq 0$ and $A^{[6]}=0$.
\end{theo}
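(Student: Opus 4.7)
The plan is to handle the two claims separately, with the substantial content lying in the bound $A^{[6]}=0$. For $A^{[2]}\neq 0$, the argument is immediate: if $A^{[2]}$ were zero, every binary product in $A$ would vanish, so the pre-Lie subalgebra generated by the two given generators would collapse to their $\mathbb{F}_p$-span and have dimension at most $2$, contradicting $\dim_{\mathbb{F}_p} A=5$.

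For $A^{[6]}=0$, the first step is to pin down the low-degree dimensions. Because $A$ is minimally generated as a pre-Lie algebra by two elements---otherwise it would already be treated in Section~$4$---one has $\dim_{\mathbb{F}_p} A/A^{[2]}=2$. This is the standard fact that in a nilpotent algebra minimal generating sets correspond to $\mathbb{F}_p$-bases of $A/A^{[2]}$, and it follows by combining the equality $A=\langle S\rangle+A^{[2]}$ with the inclusions $A\cdot A^{[k]}+A^{[k]}\cdot A\subseteq A^{[k+1]}$ and iterating using nilpotency. Consequently $\dim_{\mathbb{F}_p} A^{[2]}=3$.

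Assume toward contradiction that $A^{[6]}\neq 0$. Then every term of the descending chain $A^{[2]}\supseteq A^{[3]}\supseteq A^{[4]}\supseteq A^{[5]}\supseteq A^{[6]}$ is nonzero. The intermediate results inside Case~II of the proof of Theorem~\ref{nil} rule out $A^{[2]}=A^{[3]}$ (which would force $A^{[2]}=0$) and $A^{[3]}=A^{[4]}$ (which would force $A^{[3]}=0$). Hence the chain drops strictly at those two steps, giving $\dim A^{[3]}\leq 2$ and $\dim A^{[4]}\leq 1$. Since $A^{[4]}\supseteq A^{[5]}\supseteq A^{[6]}\neq 0$ and all three are nested subspaces of the one-dimensional $A^{[4]}$, one must have $A^{[4]}=A^{[5]}=A^{[6]}$. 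But the same Case~II analysis in Theorem~\ref{nil} shows that $A^{[4]}=A^{[5]}=A^{[6]}$ propagates to $A^{[j]}=A^{[j+1]}$ for every $j\geq 4$, forcing $A^{[4]}=0$ by nilpotency, a contradiction.

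I do not foresee any serious obstacle; once the opening reduction $\dim_{\mathbb{F}_p}A^{[2]}=3$ is in place, the argument is a compact dimension count that piggybacks on Case~II of Theorem~\ref{nil}. The most delicate point is simply the interpretation of ``generated by two elements''---taking it to mean \emph{minimally} two elements, so that $A/A^{[2]}$ is genuinely two-dimensional and the earlier collapse arguments of Theorem~\ref{nil} become applicable in the sharp form the dimension count requires.
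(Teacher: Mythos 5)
Your proof is correct and follows essentially the same route as the paper: the dimension count $\dim_{\mathbb{F}_p}A/A^{[2]}=2$, hence $\dim_{\mathbb{F}_p}A^{[2]}=3$, combined with the stabilization facts from Theorem~\ref{nil} (that $A^{[i]}=A^{[i+1]}$ at the relevant stages forces $A^{[i]}=0$). The only cosmetic difference is that you package the case analysis as a single contradiction argument ending at $A^{[4]}=A^{[5]}=A^{[6]}$, whereas the paper enumerates the cases explicitly; the mathematical content is identical.
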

\begin{proof}
    If $A^{[2]}=0$ then $\dime_{\mathbb{F}_p}A \leq 2$, a contradiction. \\
    Now $A/A^{[2]}$ has dimension $2$ as a vector space over $\mathbb{F}_p$. So $\dime_{\mathbb{F}_p}A^{[2]}=3$. Note if $A^{[2]}=A^{[3]}$, then from Theorem \ref{nil} we have $A^{[2]}=0$. So $A^{[3]} \subset A^{[2]}$ and hence $\dime_{\mathbb{F}_p}A^{[3]} \leq 2$. Now the following two cases may arise:\\
    \textbf{Case I:} Let $\dime_{\mathbb{F}_p}A^{[3]} = 2$. If $A^{[3]}=A^{[4]}$, then again from Theorem \ref{nil} $A^{[3]}=0$ and hence we arrive at a contradiction.\\
    So $A^{[4]} \subset A^{[3]}$ implying $\dime_{\mathbb{F}_p}A^{[4]} \leq 1$.\\
    If $\dime_{\mathbb{F}_p}A^{[4]} < 1$, then $A^{[4]}=0$ and hence we are done.\\
    Let $\dime_{\mathbb{F}_p}A^{[4]} = 1$. If $A^{[5]} \subset A^{[4]}$ then $A^{[5]}=0$ and we are done.\\
    If $A^{[4]}=A^{[5]}$ we have the following two possibilities.
    \begin{itemize}
        \item $A^{[6]} \subset A^{[5]}=A^{[4]} \implies A^{[6]}=0$.
        \item $A^{[6]}= A^{[5]}=A^{[4]}$. Then from Theorem \ref{nil}
 we have $A^{[4]}=0$, a contradiction.
 \end{itemize}
 \textbf{Case II:} Let $\dime_{\mathbb{F}_p}A^{[3]} < 2$. If $\dime_{\mathbb{F}_p}A^{[3]} = 0$, we are done.\\
 Assume $\dime_{\mathbb{F}_p}A^{[3]} = 1$. If $A^{[3]}=A^{[4]}$ then $A^{[3]}=0$, a contradiction. If $A^{[4]} \subset A^{[3]}$ then $A^{[4]}=0$.\\
 So in any case $A^{[6]}=0$.
 \end{proof}
\subsection{$A^{[2]} \neq 0$ and $A^{[3]}=0$.} 
Assume $A$ is generated by $x$ and $y$ as a pre-Lie algebra. As $x$ and $y$ are generators $x,y \notin A^{[2]}$. Also no linear combination of $x$ and $y$ can belong to $A^{[2]}$. So $\dime_{\mathbb{F}_p}A/A^{[2]}\geq 2$. We claim that $\dime_{\mathbb{F}_p}A/A^{[2]}= 2$. Let $\overline{u} \in A/A^{[2]}$ with $u \neq x,y$. As $u \in A$ and $A$ is generated by $x$ and $y$ as a pre-Lie algebra we have
\[u=a_1x+a_2y+a_3~x\cdot x+a_4~x\cdot y+a_5~ y \cdot x+a_6~ y \cdot y.\] where $a_i\in \mathbb{F}_p$ for all $i=1,\ldots,6$. This implies 
\[\overline{u}=a_1\overline{x}+a_2\overline{y}\]. So any member $A/A^{[2]}$ is a linear combination of $\overline{x}$ and $\overline{y}$. So $\dime_{\mathbb{F}_p}A/A^{[2]}= 2$.\\
Now $A^{[2]}$ is generated by $x \cdot x,x\cdot y, y \cdot x, y \cdot y$ as a $\mathbb{F}_p$ vector space. Clearly $\dime_{\mathbb{F}_p}A^{[2]}= 3$. So choose $u,v,w \in \{x \cdot x,x\cdot y, y \cdot x, y \cdot y\}$ such that they are linearly independent. Then $\{x,y,u,v,w\}$ forms a basis of the $\mathbb{F}_p$ vector space A. For any $i,j\in\{x,y\}$ 
\[i \cdot j=\alpha_{ij}u+\beta_{ij}v+\gamma_{ij}w\] where at least $3$ tuples $(\alpha_{ij},\beta_{ij},\gamma_{ij})$ are not $(0,0,0)$ or multiples of each other.\\
So we can summarize this subsection by stating the following theorem.
\begin{theo}
    Let $A$ be a pre-Lie algebra generated by two elements $x$ and $y$ as a pre-Lie algebra. Assume $A^{[2]} \neq 0$ and $A^{[3]}=0$. Then we have the 
following
\begin{itemize}
    \item[1.] $A$ has a $\mathbb{F}_p$ basis $\{x,y,u,v,w\}$ where $u,v,w \in \{x \cdot x,x\cdot y, y \cdot x, y \cdot y\}$.
    \item[2.]   For any $i,j\in\{x,y\}$ 
\[i \cdot j=\alpha_{ij}u+\beta_{ij}v+\gamma_{ij}w\] where at least $3$ tuples $(\alpha_{ij},\beta_{ij},\gamma_{ij})$ are not $(0,0,0)$ or multiples of each other.
\end{itemize}
\end{theo}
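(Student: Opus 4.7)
The plan is to exploit the strong constraint $A^{[3]}=0$, which forces every triple product to vanish and hence every monomial of length $\ge 3$ in the generators $x,y$ to be zero. Consequently the only potentially nonzero monomials in $A$ are $x$, $y$, and the four length-two products $x\cdot x$, $x\cdot y$, $y\cdot x$, $y\cdot y$, and $A$ as an $\mathbb{F}_p$-vector space is spanned by these six elements. Since $|A|=p^5$, they must satisfy exactly one linear relation, and the structure of the theorem is already determined by where that relation lives.

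First I would establish that $\dim_{\mathbb{F}_p}A/A^{[2]}=2$. The upper bound is immediate: modulo $A^{[2]}$ every element is a linear combination of $\bar{x}$ and $\bar{y}$ because all longer monomials sit in $A^{[2]}$. For the lower bound I would rule out by contradiction the case where a nontrivial combination $\lambda x+\mu y$ lies in $A^{[2]}$. In that case, multiplying by $x$ or $y$ on either side and using $A^{[3]}=0$ forces $\lambda (x\cdot x)+\mu(y\cdot x)=0$, $\lambda (x\cdot y)+\mu(y\cdot y)=0$ and the two analogous relations from right multiplication, and hence the four generators of $A^{[2]}$ collapse to at most one independent element. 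Together with $\dim A/A^{[2]}\le 1$, this would give $\dim A\le 3$, contradicting $|A|=p^5$.

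Once $\dim A/A^{[2]}=2$ is known, we immediately get $\dim A^{[2]}=3$. Since $A^{[2]}$ is spanned by the four products $x\cdot x,\ x\cdot y,\ y\cdot x,\ y\cdot y$, at least three of them must be linearly independent; I pick such a triple and call them $u,v,w$. Then $\{x,y,u,v,w\}$ is an $\mathbb{F}_p$-basis, proving part (1). Part (2) is then a direct consequence: each $i\cdot j$ with $i,j\in\{x,y\}$ lies in $A^{[2]}$, so expanding it in the basis $\{u,v,w\}$ yields the required tuple $(\alpha_{ij},\beta_{ij},\gamma_{ij})$, and the condition that at least three of the four tuples are nonzero and pairwise non-proportional is exactly the assertion that the four products span the full 3-dimensional space $A^{[2]}$, which is automatic from $\dim A^{[2]}=3$.

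The only delicate step is the case analysis establishing $\dim A/A^{[2]}\ge 2$, and even there the reasoning is short because $A^{[3]}=0$ forces every relation $\lambda x+\mu y\in A^{[2]}$ to cascade into relations among the four length-two products. All remaining steps are bookkeeping in a five-dimensional vector space.
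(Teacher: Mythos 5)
Your proposal is correct and follows essentially the same route as the paper: show $\dim_{\mathbb{F}_p}A/A^{[2]}=2$, deduce $\dim_{\mathbb{F}_p}A^{[2]}=3$, and select three linearly independent elements among the four length-two products to complete $\{x,y\}$ to a basis, with part (2) falling out of the spanning condition. The only difference is that you justify the lower bound $\dim_{\mathbb{F}_p}A/A^{[2]}\ge 2$ explicitly, by showing that a nontrivial relation $\lambda x+\mu y\in A^{[2]}$ would, via $A^{[3]}=0$, collapse the four products to a single line and make $\dim_{\mathbb{F}_p}A$ too small, whereas the paper simply asserts that no nontrivial combination of the two generators can lie in $A^{[2]}$; your version is the more self-contained of the two.
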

\subsection{$A^{[3]} \neq 0 $ and $A^{[4]}=0$.}
By similar argument as in Subsection \ref{s1} we have $\dime_{\mathbb{F}_p}A/A^{[2]}= 2$. So $\dime_{\mathbb{F}_p}A^{[2]}= 3$. So $\dime_{\mathbb{F}_p}A^{[3]}\leq 2$. Here we have the following two cases.\\
\textbf{Case I:} $\dime_{\mathbb{F}_p}A^{[3]}= 2$. Define $S:=\{x \cdot x,x\cdot y, y \cdot x, y \cdot y\}$. Note that all of $S$ can not be in $A^{[3]}$ as this will imply $A^{[2]} \subset A^{[3]}$ and hence $A^{[2]}=0$, a contradiction. So at least one element of $S$ is not in $A^{[3]}$. Let us call it $u$.\\
As $A^{[3]} \neq 0$ and $\dime_{\mathbb{F}_p}A^{[3]}= 2$ then there exists two non-zero product of $3$ elements. Call them $v$ and $w$. Then $v$ and $w$ are both of the form $c \cdot d$ where $c \in \{x,y\}$ and $d \in S\setminus A^{[3]}$ or vice-versa. Then $\{x,y,u,v,w\}$ forms a basis of the $\mathbb{F}_p$ vector space $A$.\\
Note for $i,j\in \{x,y\}$,
\[i \cdot j=\alpha_{ij}u+\beta_{ij}v+\gamma_{ij}w\] where $\alpha_{ij},\beta_{ij},\gamma_{ij} \in \mathbb{F}_p $. As $S \not\subset A^{[3]}$ not all of $\alpha_{ij}=0$.\\
Furthermore for $k,l$ with $k=u,l \in \{x,y\}$ or $k \in \{x,y\},l=u$, we let \[k\cdot l=\delta_{kl}v+\nu_{kl}w.\]
Note that the only two relevant pre-Lie algebra relations here are
\[(x \cdot y)\cdot x-x \cdot (y \cdot x)=(y \cdot x)\cdot x-y \cdot x^2\]
and 
\[(y \cdot x)\cdot y-y \cdot (x \cdot y)=(x \cdot y)\cdot y-x \cdot y^2\]
as products of more than $3$ elements are $0$. Using the notations specified we get the following relations among the scalars:
\begin{align*}
    \alpha_{xy}\delta_{ux}-\alpha_{yx}\delta_{xu}&=\alpha_{yx}\delta_{ux}-\alpha_{xx}\delta_{yu}.\\
    \alpha_{xy}\nu_{ux}-\alpha_{yx}\nu_{xu}&=\alpha_{yx}\nu_{ux}-\alpha_{xx}\nu_{yu}.\\
    \alpha_{yx}\delta_{uy}-\alpha_{xy}\delta_{yu}&=\alpha_{xy}\delta_{uy}-\alpha_{yy}\delta_{xu}.\\
    \alpha_{yx}\nu_{uy}-\alpha_{xy}\nu_{yu}&=\alpha_{xy}\nu_{uy}-\alpha_{yy}\nu_{xu}.
\end{align*}
We can summarize these by stating the following theorem.
\begin{theo}
     Let $A$ be a pre-Lie algebra generated by two elements $x$ and $y$ as a pre-Lie algebra. Assume $A^{[3]} \neq 0$, $A^{[4]}=0$ and $\dime_{\mathbb{F}_p}A^{[3]}=2$. Then we have the 
following
\begin{itemize}
    \item[1.] Not all of $S:=\{x \cdot x,x\cdot y, y \cdot x, y \cdot y\}$ is in $A^{[3]}$.
    \item[2.] $A$ is a $\mathbb{F}_p$ vector space with basis $\{x,y,u,v,w\}$ where $u \in S \setminus A^{[3]}$ and $v,w$ has one factor in $\{x,y\}$ and the other factor is $u$.
    \item[3.] $A^{[3]}=\mathbb{F}_pv+\mathbb{F}_pw$ and $A/A^{[2]}=\mathbb{F}_pu$.
    \item[4.] for $i,j\in \{x,y\}$,
\[i \cdot j=\alpha_{ij}u+\beta_{ij}v+\gamma_{ij}w\] where $\alpha_{ij},\beta_{ij},\gamma_{ij} \in \mathbb{F}_p $. As $S \not\subset A^{[3]}$ not all of $\alpha_{ij}=0$.\\
Furthermore for $k,l$ with $k=u,l \in \{x,y\}$ or $k \in \{x,y\},l=u$, we let \[k\cdot l=\delta_{kl}v+\nu_{kl}w.\]. Then we have 
\begin{align*}
    \alpha_{xy}\delta_{ux}-\alpha_{yx}\delta_{xu}&=\alpha_{yx}\delta_{ux}-\alpha_{xx}\delta_{yu}.\\
    \alpha_{xy}\nu_{ux}-\alpha_{yx}\nu_{xu}&=\alpha_{yx}\nu_{ux}-\alpha_{xx}\nu_{yu}.\\
    \alpha_{yx}\delta_{uy}-\alpha_{xy}\delta_{yu}&=\alpha_{xy}\delta_{uy}-\alpha_{yy}\delta_{xu}.\\
    \alpha_{yx}\nu_{uy}-\alpha_{xy}\nu_{yu}&=\alpha_{xy}\nu_{uy}-\alpha_{yy}\nu_{xu}.
\end{align*}
\item[5.] If $A$ satisfies $1-4$ it is a well defined pre-Lie algebra.
\end{itemize}
\end{theo}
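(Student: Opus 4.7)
The plan is to exploit the nilpotency $A^{[4]}=0$ to force nearly all three-fold products to live in the span of $\{x,y\}\cdot u$ and $u\cdot\{x,y\}$, and then to extract the pre-Lie identities by plugging into the axiom. Since $x,y$ pre-Lie generate $A$, every element of $A$ is an $\mathbb{F}_p$-linear combination of $x,y$ and iterated products of them; hence modulo $A^{[2]}$ we have $\dime_{\mathbb{F}_p}A/A^{[2]}=2$, so $\dime_{\mathbb{F}_p}A^{[2]}=3$ and $\dime_{\mathbb{F}_p}A^{[2]}/A^{[3]}=1$. Claim~(1) follows: if $S:=\{x\cdot x,x\cdot y,y\cdot x,y\cdot y\}\subset A^{[3]}$, then $A^{[2]}$, being $\mathbb{F}_p$-spanned by $S$, is contained in $A^{[3]}$, forcing $A^{[2]}=A^{[3]}$; by Theorem~\ref{nil} (Case~II) this gives $A^{[2]}=0$, contradicting $\dime_{\mathbb{F}_p} A^{[2]}=3$.

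Now choose $u\in S\setminus A^{[3]}$; then its class generates $A^{[2]}/A^{[3]}$, so every other element of $S$ equals $\lambda u+r$ for some $\lambda\in\mathbb{F}_p$ and $r\in A^{[3]}$. Since $A^{[4]}=0$, any product with at least one factor in $A^{[3]}$ vanishes; expanding $A^{[3]}=A\cdot A^{[2]}+A^{[2]}\cdot A$ and using that $x\cdot r, r\cdot x, y\cdot r, r\cdot y\in A^{[4]}=0$ for $r\in A^{[3]}$, only the $\lambda u$ parts contribute, whence $A^{[3]}$ is $\mathbb{F}_p$-spanned by $\{x\cdot u,\,u\cdot x,\,y\cdot u,\,u\cdot y\}$. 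Since $\dime_{\mathbb{F}_p} A^{[3]}=2$, two of these four products form a basis $\{v,w\}$; together with $x,y,u$ this yields a basis of $A$, establishing (2) and (3).

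For (4), substitute $i\cdot j=\alpha_{ij}u+\beta_{ij}v+\gamma_{ij}w$ and $k\cdot l=\delta_{kl}v+\nu_{kl}w$ into the pre-Lie identity $(a\cdot b)\cdot c-a\cdot(b\cdot c)=(b\cdot a)\cdot c-b\cdot(a\cdot c)$ for the two triples $(a,b,c)=(x,y,x)$ and $(y,x,y)$. Any term containing $v$ or $w$ as a factor lies in $A^{[4]}=0$, so only the $u$-coefficients $\alpha_{ij}$ propagate; matching $v$- and $w$-coefficients in the two surviving identities produces exactly the four stated scalar relations. The main obstacle is claim~(5), the converse: given an $\mathbb{F}_p$-vector space with basis $\{x,y,u,v,w\}$ and the prescribed multiplication, one has to verify the pre-Lie identity on every ordered triple of basis vectors. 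Most are immediate, since any product of three basis elements in which two factors lie in $A^{[2]}=\mathbb{F}_p u+\mathbb{F}_p v+\mathbb{F}_p w$ lies in $A^{[4]}=0$; what remains is precisely the two identities singled out above, which hold iff the four scalar relations do. This last verification is the bookkeeping heart of the argument, but it is case-by-case and purely mechanical once the nilpotency vanishings are recorded.
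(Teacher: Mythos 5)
Your proposal is correct and follows essentially the same route as the paper: dimension counting gives $\dime_{\mathbb{F}_p}A/A^{[2]}=2$ and $\dime_{\mathbb{F}_p}A^{[2]}/A^{[3]}=1$, the basis $\{x,y,u,v,w\}$ is extracted from $S$ and from products of $x,y$ with $u$, and the four scalar relations come from the only two non-trivially satisfied pre-Lie identities, namely those for the triples $(x,y,x)$ and $(x,y,y)$, all other triples dying in $A^{[4]}=0$. Your justification that $A^{[3]}$ is spanned by $\{x\cdot u,\,u\cdot x,\,y\cdot u,\,u\cdot y\}$ (writing each element of $S$ as $\lambda u+r$ with $r\in A^{[3]}$ and killing the $r$-terms via $A^{[4]}=0$) is in fact slightly more careful than the paper's bare assertion that $v,w$ have this form.
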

\textbf{Case -II:}
Let $\dime_{\mathbb{F}_p}A^{[3]}=1$. By similar argument as in Case-I, $\dime_{\mathbb{F}_p}A/A^{[2]}=2$. Then $\dime_{\mathbb{F}_p}A^{[2]}/A^{[3]}=2$. Note $A^{[2]}$ is generated by  $S:=\{x \cdot x,x\cdot y, y \cdot x, y \cdot y\}$. Again all of $S$ can not be in $A^{[3]}$. So at least $2$ elements from $S$ should not be in $A^{[3]}$. Let $u$ and $v$ be $2$ such linearly independent elements in $S \setminus A^{[3]}$.\\
As $A^{[3]}\neq 0$, we ca pick a non-zero element of the form $c \cdot d$ where $c \in \{x,y\}$ and $d \in S \setminus A^{[3]}$ or vice-versa. Denote that element by $w$.\\
Then $\{x,y,u,v,w\}$ forms a basis of the ${\mathbb{F}_p}$ vector space $A$. Also for $i,j \in \{x,y\}$ we have 
\[i \cdot j=\alpha_{ij}u+\beta_{ij}v+\gamma_{ij}w\]Note all pairs $(\alpha_{ij},\beta_{ij})$ can not be $(0,0)$. Also for $k\in \{x,y\}$ and $l \in \{u,v\}$ or vice-versa we have
\[k \cdot l=\delta_{kl}w.\]
Again as in Case-I the two relevant pre-Lie algebra relations here are
\[(x \cdot y)\cdot x-x \cdot (y \cdot x)=(y \cdot x)\cdot x-y \cdot x^2\]
and 
\[(y \cdot x)\cdot y-y \cdot (x \cdot y)=(x \cdot y)\cdot y-x \cdot y^2\]
as products of more than $3$ elements are $0$. Using the notations specified we get the following relations among the scalars:
\begin{align*}
    \alpha_{xy}\delta_{ux}+\beta_{xy}\delta_{vx}-\alpha_{yx}\delta_{xu}-\beta_{yx}\delta_{xv}&=\alpha_{yx}\delta_{ux}+\beta_{yx}\delta_{vx}-\alpha_{xx}\delta_{yu}-\beta_{xx}\delta_{yv}.\\
    \alpha_{yx}\delta_{uy}+\beta_{yx}\delta_{vy}-\alpha_{xy}\delta_{yu}-\beta_{xy}\delta_{yv}&=\alpha_{xy}\delta_{uy}+\beta_{xy}\delta_{vy}-\alpha_{yy}\delta_{xu}-\beta_{yy}\delta_{xv}.
\end{align*}
Hence we have the following theorem
\begin{theo}
    Let $A$ be a pre-Lie algebra generated by two elements $x$ and $y$ as a pre-Lie algebra. Assume $A^{[3]} \neq 0$, $A^{[4]}=0$ and $\dime_{\mathbb{F}_p}A^{[3]}=1$. Then we have the 
following
\begin{itemize}
    \item[1.] Not all of $S:=\{x \cdot x,x\cdot y, y \cdot x, y \cdot y\}$ is in $A^{[3]}$.
    \item[2.] $A$ has a $\mathbb{F}_p$ basis $\{x,y,u,v,w\}$ where $u,v \in S\setminus A^{[3]}$ and $w$ is of the form $c \cdot d$ where $c\in \{x,y\}$ and $d \in \{u,v\}$ or vice-versa.
    \item[3.] $A^{[2]}/A^{[3]}=\mathbb{F}_pu+\mathbb{F}_pv$ and $A^{[3]}=\mathbb{F}_pw$.
    \item[4.] For $i,j \in \{x,y\}$ we have 
\[i \cdot j=\alpha_{ij}u+\beta_{ij}v+\gamma_{ij}w\]Note all pairs $(\alpha_{ij},\beta_{ij})$ can not be $(0,0)$. Also for $k\in \{x,y\}$ and $l \in \{u,v\}$ or vice-versa we have
\[k \cdot l=\delta_{kl}w.\] Then we have
\begin{align*}
    \alpha_{xy}\delta_{ux}+\beta_{xy}\delta_{vx}-\alpha_{yx}\delta_{xu}-\beta_{yx}\delta_{xv}&=\alpha_{yx}\delta_{ux}+\beta_{yx}\delta_{vx}-\alpha_{xx}\delta_{yu}-\beta_{xx}\delta_{yv}.\\
    \alpha_{yx}\delta_{uy}+\beta_{yx}\delta_{vy}-\alpha_{xy}\delta_{yu}-\beta_{xy}\delta_{yv}&=\alpha_{xy}\delta_{uy}+\beta_{xy}\delta_{vy}-\alpha_{yy}\delta_{xu}-\beta_{yy}\delta_{xv}.
\end{align*}
\item[5.] If $A$ satisfies $1-4$ it is a well defined pre-Lie algebra.
\end{itemize}
\end{theo}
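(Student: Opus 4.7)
The plan is to mirror the strategy used in Case~I of the preceding theorem, adapting to the smaller dimension of $A^{[3]}$. First I would pin down the dimensions: the opening argument of Subsection~5.1 already gives $\dim_{\mathbb{F}_p} A/A^{[2]} = 2$, hence $\dim_{\mathbb{F}_p} A^{[2]} = 3$, and combining with $\dim_{\mathbb{F}_p} A^{[3]} = 1$ yields $\dim_{\mathbb{F}_p} A^{[2]}/A^{[3]} = 2$. Since $S = \{x \cdot x,\, x \cdot y,\, y \cdot x,\, y \cdot y\}$ spans $A^{[2]}$, if all of $S$ lay in $A^{[3]}$ we would have $A^{[2]} = A^{[3]}$, forcing $A^{[2]} = 0$ by Theorem~\ref{nil} and contradicting $A^{[3]} \neq 0$. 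Thus at least two elements of $S$ have $\mathbb{F}_p$-linearly independent images in $A^{[2]}/A^{[3]}$; pick such a pair and call them $u$ and $v$, proving part~1.

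Next I would exhibit $w$. Because $A^{[4]} = 0$, the space $A^{[3]} = A \cdot A^{[2]} + A^{[2]} \cdot A$ is generated by products $c \cdot d$ with exactly one factor in $\{x,y\}$ and the other in $A^{[2]}$; the members of $S$ already lying in $A^{[3]}$ can only contribute to $A^{[4]} = 0$ after one more multiplication, so the $A^{[2]}$-factor may as well be chosen from $\{u,v\}$. Picking a non-zero such product yields $A^{[3]} = \mathbb{F}_p w$, and $\{x,y,u,v,w\}$ is then a basis of $A$, establishing parts~2 and~3. The scalars in part~4 are then defined by unique expansion of $i \cdot j$ in this basis, and the statement that not every pair $(\alpha_{ij},\beta_{ij})$ vanishes is exactly the assertion $S \not\subset A^{[3]}$.

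To obtain the two displayed scalar identities, I would substitute the expansions into the pre-Lie relations
\[(x \cdot y) \cdot x - x \cdot (y \cdot x) = (y \cdot x) \cdot x - y \cdot (x \cdot x)\]
and its image under $x \leftrightarrow y$, noting that every other triple $(a,b,c) \in \{x,y\}^3$ has $a=b$ and so yields the trivial identity. In each surviving term the $\gamma_{ij} w$ contribution is annihilated by the next multiplication, because $w \in A^{[3]}$ and $A \cdot A^{[3]} = A^{[3]} \cdot A \subseteq A^{[4]} = 0$; only the $\alpha_{ij} u + \beta_{ij} v$ parts feed the second product, and reading off the $w$-coefficient produces exactly the two stated equations. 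Triples from the basis containing a factor from $\{u,v,w\}$ land in $A^{[4]} = 0$ on both sides of the pre-Lie identity, so no further relations occur.

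For the converse in part~5, I would define multiplication on the abstract $5$-dimensional $\mathbb{F}_p$-space with basis $\{x,y,u,v,w\}$ via the prescribed scalars, declaring every unlisted basis-product (including all products with $w$) to be zero. Trilinearity reduces the pre-Lie identity to a finite check on basis triples: triples with a factor in $\{u,v,w\}$ give $0=0$ because each inner product already sits in $A^{[3]}$ and one further multiplication kills it, while triples from $\{x,y\}$ either collapse by symmetry or reduce precisely to the two displayed equations assumed in part~4. The main obstacle is the bookkeeping in this final case analysis---one must verify that no accidental linear dependence sneaks in between the chosen $u, v$ and the projection of the remaining elements of $S$, and that the ambiguity in choosing $w$ does not affect the pre-Lie constraints---but once the case split is set up as above the verification is routine.
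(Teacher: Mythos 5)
Your proposal is correct and follows essentially the same route as the paper: the same dimension count forcing $\dim_{\mathbb{F}_p}A^{[2]}/A^{[3]}=2$, the same choice of $u,v\in S\setminus A^{[3]}$ and of $w$ as a nonzero length-three product, and the same two pre-Lie relations for the triples $(x,y,x)$ and $(y,x,y)$ with all longer products killed by $A^{[4]}=0$. Your write-up is in fact somewhat more explicit than the paper's about why the $\gamma_{ij}w$ terms drop out and why only two scalar identities arise, but the underlying argument is identical.
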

\subsection{$A^{[4]} \neq 0$ and $A^{[5]}=0$.} From Theorem \ref{nil2} it is clear that $\dime_{\mathbb{F}_p}A^{[4]}=1$. Define $S:=\{x \cdot x,x\cdot y, y \cdot x, y \cdot y\}$. Note that $A^{[3]}$ is generated by $S':=\{x\cdot s,y \cdot s,s \cdot x,s \cdot y\}$. As $A^{[4]}\neq A^{[3]}$, $S' \not\subset A^{[4]}$. So there exists at least one non-zero element of the form $c \cdot d$ where $c \in \{x,y\}$, $d\in S$ or vice-versa such that $c \cdot d \notin A^{[4]}$. Denote that element by $v$ and the element of $S$ which is a factor $v$, denote it by $u$. Note $u \in S \setminus A^{[3]}$.\\
As $A^{[4]}$ is one dimensional, pick a non zero element say $w$ from $A^{[4]}$. Then $A^{[4]}=\mathbb{F}_pw, A^{[3]}/A^{[4]}=\mathbb{F}_pv$ and $A^{[2]}/A^{[3]}==\mathbb{F}_pu$.\\
So $\{x,y,u,v,w\}$ forms a basis of $A$. For $i,j \in \{x,y\}$ we have 
\[i \cdot j=\alpha_{ij}u+\beta_{ij}v+\gamma_{ij}w\]Note not all $\alpha_{ij}$ can be $0$. Also for $k\in \{x,y\}$ and $l =u$ or vice-versa we have
\[k \cdot l=\delta_{kl}v+\nu_{kl}w.\] Note not all $\delta_{kl}$ are zero.
Also let \[u\cdot u=\mu_{uu}w\] and for $m=v,n \in\{x,y\}$ or vice-versa let \[m\cdot n=\mu_{mn}w.\]
Again from $2$ relevant relations from $A^{[3]}$, we have
\begin{align*}
    \alpha_{xy}\delta_{ux}-\alpha_{yx}\delta_{xu}&=\alpha_{yx}\delta_{xu}-\alpha_{xx}\delta_{uy}.\\
    \alpha_{yx}\delta_{uy}-\alpha_{xy}\delta_{yu}&=\alpha_{xy}\delta_{yu}-\alpha_{yy}\delta_{ux}.\\
    \alpha_{xy}\nu_{ux}+\beta_{xy}\mu_{vx}-\alpha_{yx}\nu_{xu}-\beta_{yx}\mu_{xv}&=\alpha_{yx}\nu_{ux}+\beta_{yx}\mu_{vx}-\alpha_{xx}\nu_{uy}-\beta_{xx}\mu_{yv}.\\
       \alpha_{yx}\nu_{uy}+\beta_{yx}\mu_{vy}-\alpha_{xy}\nu_{yu}-\beta_{xy}\mu_{yv}&=\alpha_{xy}\nu_{uy}+\beta_{xy}\mu_{vy}-\alpha_{yy}\nu_{ux}-\beta_{yy}\mu_{xv}.
    `\end{align*}
There are $20$ relevant relations in $A^{[4]}$. Since we know there exists at least one $\alpha_{ij}\neq 0$, from those $20$ relations we have
\begin{align*}
\alpha_{xy}^2\mu_{uu}-\alpha_{xy}\delta_{yu}\mu_{xv}&=\alpha_{yx}\alpha_{xy}\mu_{uu}-\alpha_{xy}\delta_{xu}\mu_{yv}.\\
    \alpha_{xy}\alpha_{yx}\mu_{uu}-\alpha_{yx}\delta_{yu}\mu_{xv}&=\alpha_{yx}^2\mu_{uu}-\alpha_{yx}\delta_{xu}\mu_{yv}.\\
    \alpha_{xy}\alpha_{xx}\mu_{uu}-\alpha_{xx}\delta_{yu}\mu_{xv}&=\alpha_{yx}\alpha_{xx}\mu_{uu}-\alpha_{xx}\delta_{xu}\mu_{yv}.\\
    \alpha_{xy}\alpha_{yy}\mu_{uu}-\alpha_{yy}\delta_{yu}\mu_{xv}&=\alpha_{yx}\alpha_{yy}\mu_{uu}-\alpha_{yy}\delta_{xu}\mu_{yv}.\\
\delta_{xu}\mu_{vy}-\delta_{uy}\mu_{xv}&=\delta_{ux}\mu_{vy}-\alpha_{xy}\mu_{uu}.\\
\delta_{xu}\mu_{vx}- \delta_{ux}\mu_{xv}&=\delta_{ux}\mu_{vx}-\alpha_{xx}\mu_{uu}.\\
\delta_{yu}\mu_{vy}-\delta_{uy}\mu_{yv}&=\delta_{uy}\mu_{vy}-\alpha_{yy}\mu_{uu}.\\
    \delta_{yu}\mu_{vx}- \delta_{ux}\mu_{yv}&=\delta_{uy}\mu_{vx}-\alpha_{yx}\mu_{uu}.
\end{align*}
So we can summarize the section by the following theorem
\begin{theo}
    Let $A$ be a pre-Lie algebra generated by two elements $x$ and $y$ as a pre-Lie algebra. Assume $A^{[4]} \neq 0$ and $A^{[5]}=0$. Then we have the 
following
\begin{itemize}
    \item[1.] $\{x,y,u,v,w\}$ forms a $\mathbb{F}_p$ basis of $A$ where $u \in S\setminus A^{[3]}$, $S:=\{x \cdot x,x\cdot y, y \cdot x, y \cdot y\}$, $v$ is of the form $c \cdot d$ where $c\in \{x,y\}$ and $d=u$ or vice-versa and $w$ is any non-zero element in $A^{[4]}$.
  \item[2.] $A^{[2]}/A^{[3]}=\mathbb{F}_pu$, $A^{[3]}/A^{[4]}=\mathbb{F}_pv$ and $A^{[4]]}=\mathbb{F}_pw$.
    \item[4.] For $i,j \in \{x,y\}$ we have 
\[i \cdot j=\alpha_{ij}u+\beta_{ij}v+\gamma_{ij}w\]Not all $\alpha_{ij}$ can be $0$. Also for $k\in \{x,y\}$ and $l=u$ or vice-versa we have
\[k \cdot l=\delta_{kl}v+\nu_{kl}w.\] Also not all $\delta_{kl}$ are zero. Also \[u \cdot u=\mu_{uu}w\] and for $m=v,n \in\{x,y\}$ or vice-versa \[m\cdot n=\mu_{mn}w\] Then we have
\begin{align*}
     \alpha_{xy}\delta_{ux}-\alpha_{yx}\delta_{xu}&=\alpha_{yx}\delta_{xu}-\alpha_{xx}\delta_{uy}.\\
    \alpha_{yx}\delta_{uy}-\alpha_{xy}\delta_{yu}&=\alpha_{xy}\delta_{yu}-\alpha_{yy}\delta_{ux}.\\
    \alpha_{xy}\nu_{ux}+\beta_{xy}\mu_{vx}-\alpha_{yx}\nu_{xu}-\beta_{yx}\mu_{xv}&=\alpha_{yx}\nu_{ux}+\beta_{yx}\mu_{vx}-\alpha_{xx}\nu_{uy}-\beta_{xx}\mu_{yv}.\\
       \alpha_{yx}\nu_{uy}+\beta_{yx}\mu_{vy}-\alpha_{xy}\nu_{yu}-\beta_{xy}\mu_{yv}&=\alpha_{xy}\nu_{uy}+\beta_{xy}\mu_{vy}-\alpha_{yy}\nu_{ux}-\beta_{yy}\mu_{xv}.\\
       \alpha_{xy}^2\mu_{uu}-\alpha_{xy}\delta_{yu}\mu_{xv}&=\alpha_{yx}\alpha_{xy}\mu_{uu}-\alpha_{xy}\delta_{xu}\mu_{yv}.\\
    \alpha_{xy}\alpha_{yx}\mu_{uu}-\alpha_{yx}\delta_{yu}\mu_{xv}&=\alpha_{yx}^2\mu_{uu}-\alpha_{yx}\delta_{xu}\mu_{yv}.\\
    \alpha_{xy}\alpha_{xx}\mu_{uu}-\alpha_{xx}\delta_{yu}\mu_{xv}&=\alpha_{yx}\alpha_{xx}\mu_{uu}-\alpha_{xx}\delta_{xu}\mu_{yv}.\\
    \alpha_{xy}\alpha_{yy}\mu_{uu}-\alpha_{yy}\delta_{yu}\mu_{xv}&=\alpha_{yx}\alpha_{yy}\mu_{uu}-\alpha_{yy}\delta_{xu}\mu_{yv}.\\
\delta_{xu}\mu_{vy}-\delta_{uy}\mu_{xv}&=\delta_{ux}\mu_{vy}-\alpha_{xy}\mu_{uu}.\\
\delta_{xu}\mu_{vx}- \delta_{ux}\mu_{xv}&=\delta_{ux}\mu_{vx}-\alpha_{xx}\mu_{uu}.\\
\delta_{yu}\mu_{vy}-\delta_{uy}\mu_{yv}&=\delta_{uy}\mu_{vy}-\alpha_{yy}\mu_{uu}.\\
    \delta_{yu}\mu_{vx}- \delta_{ux}\mu_{yv}&=\delta_{uy}\mu_{vx}-\alpha_{yx}\mu_{uu}.
\end{align*}
\item[5.] If $A$ satisfies $1-4$ it is a well defined pre-Lie algebra.
\end{itemize}
\end{theo}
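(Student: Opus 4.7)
The plan is to peel off the filtration $A\supset A^{[2]}\supset A^{[3]}\supset A^{[4]}\supset 0$ dimension by dimension, using Theorem \ref{nil2} to pin down the dimensions, then to record every pre-Lie relation among basis elements. First, as in the previous subsections, since $x,y$ generate $A$ but none of $x,y$ nor any non-trivial linear combination can lie in $A^{[2]}$, we get $\dime_{\mathbb{F}_p}A/A^{[2]}=2$; since $A^{[2]}$ is spanned by $S=\{x\cdot x,x\cdot y,y\cdot x,y\cdot y\}$ and $A$ is five-dimensional, $\dime_{\mathbb{F}_p}A^{[2]}=3$. Theorem \ref{nil2} forces $A^{[3]}\subsetneq A^{[2]}$ and $A^{[4]}\subsetneq A^{[3]}$, and combined with the hypothesis $A^{[4]}\neq 0$ and the fact that $A^{[4]}=A^{[5]}$ would force $A^{[4]}=0$ by Theorem \ref{nil}, we obtain $\dime_{\mathbb{F}_p}A^{[2]}/A^{[3]}=\dime_{\mathbb{F}_p}A^{[3]}/A^{[4]}=\dime_{\mathbb{F}_p}A^{[4]}=1$.

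Next I would construct the basis. Because $A^{[2]}/A^{[3]}$ is one-dimensional and spanned by the images of $S$, at least one element of $S$, call it $u$, lies in $S\setminus A^{[3]}$, and its class generates $A^{[2]}/A^{[3]}$. The space $A^{[3]}$ is spanned by all products $c\cdot d$ with $c\in\{x,y\}$ and $d\in S$ (or vice-versa); since $A^{[3]}/A^{[4]}$ is one-dimensional and non-trivial, at least one such product of the form $c\cdot u$ or $u\cdot c$ with $c\in\{x,y\}$ survives modulo $A^{[4]}$ (the products involving elements of $S\cap A^{[3]}$ already lie in $A^{[4]}$), and we take this survivor as $v$. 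Finally, pick any non-zero $w\in A^{[4]}$. Then $\{x,y,u,v,w\}$ is a basis, and items 1 and 2 of the theorem follow.

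For item 4, I would introduce the coefficients $\alpha_{ij},\beta_{ij},\gamma_{ij},\delta_{kl},\nu_{kl},\mu_{uu},\mu_{mn}$ exactly as in the statement to parametrize all products of basis elements. The non-vanishing statements—that not all $\alpha_{ij}$ are zero and not all $\delta_{kl}$ are zero—are forced because $A^{[2]}/A^{[3]}$ and $A^{[3]}/A^{[4]}$ are non-zero, respectively. The constraint equations then come from writing out the pre-Lie identity $(a\cdot b)\cdot c-a\cdot(b\cdot c)=(b\cdot a)\cdot c-b\cdot(a\cdot c)$ on triples of basis elements. For triples entirely in $\{x,y\}$, products of four or more elements land in $A^{[5]}=0$, leaving only terms involving $v$ and $w$, which after expanding yield the four $\alpha\delta$-relations and four $\alpha\nu$/$\beta\mu$-relations. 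For triples with one factor equal to $u$ and two from $\{x,y\}$, the identity lands in $A^{[4]}=\mathbb{F}_p w$ and produces the remaining eight $\mu_{uu}$ and $\mu_{vx},\mu_{vy}$ relations. Triples involving $v$ or $w$ as a factor vanish identically modulo $A^{[5]}=0$, so they contribute nothing new.

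The main obstacle, and the most tedious step, is the exhaustive bookkeeping in the last paragraph: one has to consider every ordered triple from $\{x,y,u\}$ (eight triples from $\{x,y\}$-only, plus all orderings with a single $u$), identify which summands of the pre-Lie identity are automatically zero because they involve products of elements whose combined ``depth'' in the radical chain exceeds $4$, and then match coefficients in the basis $\{w\}$ (for $A^{[4]}$-valued identities) or in $\{v,w\}$ (for $A^{[3]}$-valued identities). The reason I expect no further relations to emerge is the dimension count: the whole pre-Lie structure on $A$ is determined by finitely many scalars, and the twelve listed relations exhaust the associativity-type constraints, so item 5 (well-definedness) follows by observing that these same relations were the only ones used to rule out inconsistency, hence any assignment of scalars satisfying them extends uniquely to a pre-Lie multiplication on $\mathbb{F}_p\langle x,y,u,v,w\rangle$.
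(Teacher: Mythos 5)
Your proposal is correct and follows essentially the same route as the paper: pin down the dimensions of the quotients $A^{[2]}/A^{[3]}$, $A^{[3]}/A^{[4]}$, $A^{[4]}$ via Theorem \ref{nil2}, build the basis $\{x,y,u,v,w\}$ from products of the generators, parametrize all products by scalars, and extract the listed constraints by expanding the pre-Lie identity on the relevant triples modulo $A^{[5]}=0$. The only cosmetic difference is that you choose $u$ first and then $v$ as a surviving product with $u$, whereas the paper picks $v$ first and reads off $u$ as its factor in $S$; your derivation of the $\mu_{uu}$ relations from triples containing $u$ directly yields the unscaled form of the paper's $\alpha_{ij}$-scaled identities, which is equivalent.
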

\subsubsection{$A^{[4]}=A^{[5]}\neq 0$ and $A^{[6]}=0$.}
Note $A^{[4]}=A^{[5]}$ and $A^{[6]}=0$ implies $A \cdot A^{[4]}=A \cdot A^{5]}\subset A^{[6]}=0$. Due to similar reason $A^{[5]}\cdot A=0$. Also for $a \in A^{[3]},b,c \in A$, $a\cdot(b\cdot c) \in A^{[4]}\cdot A+A \cdot A^{[4]}=0$. So $A^{[3]}\cdot A^{[2]}=0$. If we take $a \in A^{[2]}, b \in A,c \in A^{[2]}$ we have $a\cdot(b\cdot c) \in A^{[3]}\cdot A^{[2]}+A \cdot A^{[4]}=0$. So the only possible non zero elements in $A^{[5]}$ is of the form $a\cdot(b\cdot c)$ where $a,b \in A^{[2]}, c \in A $.\\
From Theorem \ref{nil2}, it is clear that $\dime_{\mathbb{F}_p}A^{[4]}=\dime_{\mathbb{F}_p}A^{[5]}=1$. Since $A^{[5]}\neq 0$, we can choose any non-zero element of the form $c \cdot (d \cdot e)$ where $c,d \in S:=\{x \cdot x,x\cdot y, y \cdot x, y \cdot y\}$ and $e \in \{x,y\}$. Call that element $w$. So $A^{[5]}=A^{[4]}=\mathbb{F}_pw$.\\
Now $S \not\subset A^{[3]}$. Also every element of $A^{[3]}$ is of the form $c \cdot d$ where $c \in S,d \in \{x,y\}$ or vice-versa. As $\dime_{\mathbb{F}_p}A^{[3]}/A^{[4]}=1$, choose a non zero element of the form $c \cdot d$ where $c \in S\setminus A^{[3]}, d\in\{x,y\}$ or vice-versa. Call that elemnt $v$. Then $A^{[3]}/A^{[4]}=\mathbb{F}_pv$ and $A^{[3]}=\mathbb{F}_pv+\mathbb{F}_pw$.\\
Again $A^{[2]}/A^{[3]}$ is of dimension $1$. Choose a non-zero element from $S\setminus A^{[3]}$. Call it $u$. Then $\{x,y,u,v,w\}$ is a basis for $A$. For $i,j \in \{x,y\}$ we have 
\[i \cdot j=\alpha_{ij}u+\beta_{ij}v+\gamma_{ij}w\]Not all $\alpha_{ij}$ can be $0$. Also for $k\in \{x,y\}$ and $l=u$ or vice-versa we have
\[k \cdot l=\delta_{kl}v+\nu_{kl}w.\] Also not all $\delta_{kl}$ are zero. Also \[u \cdot u=\mu_{uu}w, u \cdot v=\mu_{uv}w\] and for $m=v,n \in\{x,y\}$ or vice-versa \[m\cdot n=\mu_{mn}w\] Note $v$ can not be of the form $c\cdot d$ where $c\in \{x,y\}, d \in S\setminus A^{[3]}$. If $v$ is of such form then $v \in A \cdot A^{[2]}$. Note all element of the of $A^{[5]}$ except elements of the form $A^{[2]}\cdot(A^{[2]}\cdot A)$ are $0$. Now elements of $A^{[3]}$ are of the form $\alpha~v+\beta~w$ where $\alpha,\beta \in \mathbb{F}_p$. So, for any $a\in A^{[2]}$,
\[a\cdot(\alpha~v+\beta~w)=\alpha~a\cdot v+\beta~a \cdot w=0\] as $a\cdot w \in A^{[7]}=0$ and $a\cdot v$ is of the form $A^{[2]}\cdot(A\cdot A^{[2]})=0$. So $A^{[2]} \cdot A^{[3]}=0$ implying $A^{[5]}=0$, a contradiction.\\
From the relations in $A^{[3]}$ we have the same set of relations between the scalars as obtained in Subsection $5.3$.
Now from the relations in $A^{[4]}$ along with the fact that at least one $\alpha_{ij}$ is non zero we have
\begin{align*}
    \alpha_{xy}^2\mu_{uu}+\alpha_{xy}\beta_{xy}\mu_{uv}-\alpha_{xy}\delta_{yu}\mu_{xv}&=\alpha_{yx}\alpha_{xy}\mu_{uu}+\alpha_{yx}\beta_{xy}\mu_{uv}-\alpha_{xy}\delta_{xu}\mu_{yv}.\\
\alpha_{xy}\alpha_{yx}\mu_{uu}+\alpha_{xy}\beta_{yx}\mu_{uv}-\alpha_{yx}\delta_{yu}\mu_{xv}&=\alpha_{yx}^2\mu_{uu}+\alpha_{yx}\beta_{yx}\mu_{uv}-\alpha_{yx}\delta_{xu}\mu_{yv}.\\
 \alpha_{xy}\alpha_{xx}\mu_{uu}+\alpha_{xy}\beta_{xx}\mu_{uv}-\alpha_{xx}\delta_{yu}\mu_{xv}&=\alpha_{yx}\alpha_{xx}\mu_{uu}+\alpha_{yx}\beta_{xx}\mu_{uv}-\alpha_{xx}\delta_{xu}\mu_{yv}.\\
  \alpha_{xy}\alpha_{yy}\mu_{uu}+\alpha_{xy}\beta_{yy}\mu_{uv}-\alpha_{yy}\delta_{yu}\mu_{xv}&=\alpha_{yx}\alpha_{yy}\mu_{uu}+\alpha_{yx}\beta_{yy}\mu_{uv}-\alpha_{yy}\delta_{xu}\mu_{yv}.\\
 \delta_{xu}\mu_{vy}-\delta_{uy}\mu_{xv}&=\delta_{ux}\mu_{vy}-\alpha_{xy}\mu_{uu}-\beta_{xy}\mu_{uv}.\\
\delta_{xu}\mu_{vx}- \delta_{ux}\mu_{xv}&=\delta_{ux}\mu_{vx}-\alpha_{xx}\mu_{uu}-\beta_{xx}\mu_{uv}.\\
 \delta_{yu}\mu_{vy}- \delta_{uy}\mu_{yv}&=\delta_{uy}\mu_{vy}-\alpha_{yy}\mu_{uu}-\beta_{yy}\mu_{uv}.\\
\delta_{yu}\mu_{vx}- \delta_{ux}\mu_{yv}&=\delta_{uy}\mu_{vx}-\alpha_{yx}\mu_{uu}-\beta_{yx}\mu_{uv}.    
\end{align*}
So now we can summarize the results in the following theorem.
\begin{theo}
   Let $A$ be a pre-Lie algebra generated by two elements $x$ and $y$ as a pre-Lie algebra. Assume $A^{[4]}=A^{[5]}\neq 0$ and $A^{[6]}=0$. Then we have the 
following
\begin{itemize}
    \item[1.] $\{x,y,u,v,w\}$ forms a $\mathbb{F}_p$ basis of $A$ where $u \in S\setminus A^{[3]}$, $S:=\{x \cdot x,x\cdot y, y \cdot x, y \cdot y\}$, $v$ is of the form $c \cdot d$ where $c\in \{x,y\}$ and $d \in S\setminus A^{[3]}$ or vice-versa and $w$ is any non-zero element in $A^{[5]}$ of the form $e \cdot(f \cdot g)$ where $e,f \in S\setminus A^{[3]}$ and $g \in\{x,y\}$.
  \item[2.] $A^{[2]}/A^{[3]}=\mathbb{F}_pu$, $A^{[3]}/A^{[4]}=\mathbb{F}_pv$ and $A^{[4]}=A^{[5]}=\mathbb{F}_pw$.
    \item[3.] For $i,j \in \{x,y\}$ we have 
\[i \cdot j=\alpha_{ij}u+\beta_{ij}v+\gamma_{ij}w\]Not all $\alpha_{ij}$ can be $0$. Also for $k\in \{x,y\}$ and $l=u$ or vice-versa we have
\[k \cdot l=\delta_{kl}v+\nu_{kl}w.\] Also not all $\delta_{kl}$ are zero. Also \[u \cdot u=\mu_{uu}w,u \cdot v=\mu_{uv}w\] and for $m=v,n \in\{x,y\}$ or vice-versa \[m\cdot n=\mu_{mn}w\] Then we have
\begin{align*}
     \alpha_{xy}\delta_{ux}-\alpha_{yx}\delta_{xu}&=\alpha_{yx}\delta_{xu}-\alpha_{xx}\delta_{uy}.\\
    \alpha_{yx}\delta_{uy}-\alpha_{xy}\delta_{yu}&=\alpha_{xy}\delta_{yu}-\alpha_{yy}\delta_{ux}.\\
    \alpha_{xy}\nu_{ux}+\beta_{xy}\mu_{vx}-\alpha_{yx}\nu_{xu}-\beta_{yx}\mu_{xv}&=\alpha_{yx}\nu_{ux}+\beta_{yx}\mu_{vx}-\alpha_{xx}\nu_{uy}-\beta_{xx}\mu_{yv}.\\
       \alpha_{yx}\nu_{uy}+\beta_{yx}\mu_{vy}-\alpha_{xy}\nu_{yu}-\beta_{xy}\mu_{yv}&=\alpha_{xy}\nu_{uy}+\beta_{xy}\mu_{vy}-\alpha_{yy}\nu_{ux}-\beta_{yy}\mu_{xv}.\\
       \alpha_{xy}^2\mu_{uu}+\alpha_{xy}\beta_{xy}\mu_{uv}-\alpha_{xy}\delta_{yu}\mu_{xv}&=\alpha_{yx}\alpha_{xy}\mu_{uu}+\alpha_{yx}\beta_{xy}\mu_{uv}-\alpha_{xy}\delta_{xu}\mu_{yv}.\\
\alpha_{xy}\alpha_{yx}\mu_{uu}+\alpha_{xy}\beta_{yx}\mu_{uv}-\alpha_{yx}\delta_{yu}\mu_{xv}&=\alpha_{yx}^2\mu_{uu}+\alpha_{yx}\beta_{yx}\mu_{uv}-\alpha_{yx}\delta_{xu}\mu_{yv}.\\
 \alpha_{xy}\alpha_{xx}\mu_{uu}+\alpha_{xy}\beta_{xx}\mu_{uv}-\alpha_{xx}\delta_{yu}\mu_{xv}&=\alpha_{yx}\alpha_{xx}\mu_{uu}+\alpha_{yx}\beta_{xx}\mu_{uv}-\alpha_{xx}\delta_{xu}\mu_{yv}.\\
  \alpha_{xy}\alpha_{yy}\mu_{uu}+\alpha_{xy}\beta_{yy}\mu_{uv}-\alpha_{yy}\delta_{yu}\mu_{xv}&=\alpha_{yx}\alpha_{yy}\mu_{uu}+\alpha_{yx}\beta_{yy}\mu_{uv}-\alpha_{yy}\delta_{xu}\mu_{yv}.\\
\delta_{xu}\mu_{vy}-\delta_{uy}\mu_{xv}&=\delta_{ux}\mu_{vy}-\alpha_{xy}\mu_{uu}-\beta_{xy}\mu_{uv}.\\
\delta_{xu}\mu_{vx}- \delta_{ux}\mu_{xv}&=\delta_{ux}\mu_{vx}-\alpha_{xx}\mu_{uu}-\beta_{xx}\mu_{uv}.\\
 \delta_{yu}\mu_{vy}- \delta_{uy}\mu_{yv}&=\delta_{uy}\mu_{vy}-\alpha_{yy}\mu_{uu}-\beta_{yy}\mu_{uv}.\\
\delta_{yu}\mu_{vx}- \delta_{ux}\mu_{yv}&=\delta_{uy}\mu_{vx}-\alpha_{yx}\mu_{uu}-\beta_{yx}\mu_{uv}.  
\end{align*}
\item[5.] If $A$ satisfies $1-3$ it is a well defined pre-Lie algebra.
\end{itemize}  
\end{theo}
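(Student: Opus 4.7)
The plan is to follow the template already established in the preceding subsections, combining the dimension bookkeeping forced by Theorem \ref{nil2} with systematic use of the pre-Lie identity \eqref{im}. First I would unpack what $A^{[4]}=A^{[5]}\neq 0$ and $A^{[6]}=0$ force at the level of products of long length: from $A\cdot A^{[4]}=A\cdot A^{[5]}\subset A^{[6]}=0$ and similarly $A^{[4]}\cdot A=0$, and then applying \eqref{im} to triples of the form $(A^{[3]},A,A)$ and $(A^{[2]},A,A^{[2]})$ exactly as in Lemmas \ref{l1} and \ref{l2}, I would conclude that the only possibly nonzero products of length five have the shape $a\cdot(b\cdot c)$ with $a,b\in A^{[2]}$ and $c\in A$. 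Combined with Theorem \ref{nil2}, this yields $\dim_{\mathbb{F}_p}A^{[4]}=\dim_{\mathbb{F}_p}A^{[5]}=1$, and so any nonzero element of the stated form can be chosen as the basis vector $w$ with $A^{[5]}=A^{[4]}=\mathbb{F}_p w$.

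Next I would build the basis $\{x,y,u,v,w\}$ by descending from $A^{[3]}$. Since $A^{[2]}/A^{[3]}$ is one-dimensional (Theorem \ref{nil2}) and $S=\{x^2,x\cdot y,y\cdot x,y^2\}$ generates $A^{[2]}$, some $u\in S\setminus A^{[3]}$ can be chosen as a representative; similarly any generator of $A^{[3]}/A^{[4]}$ comes from an element of the form $c\cdot d$ with $c\in\{x,y\}$, $d\in S$, or vice versa. The delicate point here is to rule out that this $v$ has both factors in $A\cup(S\setminus A^{[3]})$ with the outer factor in $\{x,y\}$: I would argue, as sketched in the excerpt, that if $v\in A\cdot A^{[2]}$ in this restricted sense, then for $a\in A^{[2]}$ one has $a\cdot v\in A^{[2]}\cdot(A\cdot A^{[2]})=0$ (using \eqref{im} and the vanishing collected in step one), forcing $A^{[2]}\cdot A^{[3]}=0$ and hence $A^{[5]}=0$, contradicting the hypothesis. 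This step is the main obstacle, because it is where the special hypothesis $A^{[4]}=A^{[5]}$ (rather than a strict descent) really bites, and it requires combining several instances of the pre-Lie identity together with all the vanishings produced in step one.

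With the basis in hand, I would parametrize all structure constants as in the statement: $i\cdot j=\alpha_{ij}u+\beta_{ij}v+\gamma_{ij}w$ for $i,j\in\{x,y\}$, $k\cdot l=\delta_{kl}v+\nu_{kl}w$ for $(k,l)$ with exactly one factor $u$, and the remaining scalars $\mu_{uu},\mu_{uv},\mu_{mn}$ encoding the terms that land directly in $A^{[4]}$. The two ``useful'' pre-Lie identities are the $(x,y,x)$- and $(y,x,y)$-instances of \eqref{im}; expanding each of them modulo $A^{[4]}$ gives the first block of four relations, and reading the same two identities together with the $(a,b,c)$-instances involving one factor of $u$ gives the second block of eight relations. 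Because $A^{[4]}$ is one-dimensional and at least one $\alpha_{ij}$ is nonzero (since $u\notin A^{[3]}$), the nondegeneracy needed to derive the $\mu$-relations is automatic.

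Finally, for part 5 the converse is a direct verification: given scalars satisfying all the listed equations, one defines the product on $\{x,y,u,v,w\}$ by the prescribed formulas and extends bilinearly, then checks \eqref{im} on all triples of basis elements. Because $A^{[6]}=0$ is forced by the vanishing of products involving $v$ or $w$ in two or more slots, only finitely many triples give a nonzero contribution on either side of \eqref{im}, and those are precisely the triples whose pre-Lie identities produced the listed relations in the forward direction. Hence consistency of the relations is equivalent to the pre-Lie axiom, completing the classification.
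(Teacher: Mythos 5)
Your proposal is correct and follows essentially the same route as the paper: the same vanishing arguments via the pre-Lie identity reduce the nonzero length-five products to $A^{[2]}\cdot(A^{[2]}\cdot A)$, the same dimension count from Theorem \ref{nil2} fixes the basis, the same contradiction argument excludes choosing $v$ in $A\cdot A^{[2]}$, and the same instances of the identity in $A^{[3]}$ and $A^{[4]}$ yield the scalar relations. The only difference is that you also sketch a verification of the converse in part 5, which the paper asserts without proof.
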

\section{pre-Lie algebra $A$ with $(A,+)\cong C_p^5$ and generated by three elements as a pre-Lie ring}\label{s2}
Let us start by commenting on the nilpotency index of $(A,+,\cdot)$ which is generated by $3$ elements as a pre-Lie algebra.
\begin{theo}\label{nil3}
    Let $A$ be a pre-Lie algebra generated by $3$ elements. Then $A^{[2]}\neq 0$ and $A^{[4]}=0$.
\end{theo}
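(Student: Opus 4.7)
The plan is to mirror the proof of Theorem \ref{nil2} but with the dimension count for three generators. First, I observe that since $|A|=p^5$ we have $\dim_{\mathbb{F}_p} A = 5$. If $A^{[2]}=0$, then $A$ is spanned as an $\mathbb{F}_p$-vector space by its three generators, forcing $\dim_{\mathbb{F}_p} A \leq 3$, a contradiction. Hence $A^{[2]}\neq 0$.

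Next, to bound $\dim A^{[2]}$, I would argue exactly as in Section \ref{s1}: the images of the three generators $x,y,z$ span $A/A^{[2]}$ as an $\mathbb{F}_p$-vector space (since in $A/A^{[2]}$ all products vanish, so every element of $A$ reduces modulo $A^{[2]}$ to a linear combination of $\bar x,\bar y,\bar z$). Minimal generation gives that $\bar x,\bar y,\bar z$ are linearly independent in $A/A^{[2]}$, so $\dim_{\mathbb{F}_p} A/A^{[2]} = 3$ and therefore $\dim_{\mathbb{F}_p} A^{[2]} = 2$.

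To conclude $A^{[4]}=0$, I would run through the chain $A^{[2]}\supseteq A^{[3]}\supseteq A^{[4]}$ using Theorem \ref{nil}. If $A^{[2]}=A^{[3]}$, the case analysis inside the proof of Theorem \ref{nil} yields $A^{[2]}=0$, a contradiction. So $A^{[3]}\subsetneq A^{[2]}$ and $\dim_{\mathbb{F}_p} A^{[3]}\leq 1$. If $\dim A^{[3]}=0$ we are done. Otherwise $\dim A^{[3]}=1$, and if $A^{[3]}=A^{[4]}$, the analogous case of Theorem \ref{nil} forces $A^{[3]}=0$, a contradiction; hence $A^{[4]}\subsetneq A^{[3]}$, giving $\dim A^{[4]}=0$ and so $A^{[4]}=0$.

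The main obstacle, as I see it, is not really a calculation but a bookkeeping one: one must cleanly cite from the proof of Theorem \ref{nil} the implications ``$A^{[i]}=A^{[i+1]} \Rightarrow A^{[i]}=0$'' for $i=2,3$, rather than only the $i=4$ statement which Theorem \ref{nil} emphasizes. Once those two ``collapse'' lemmas are extracted, the three-generator bound $\dim A^{[2]}=2$ makes the descending chain terminate in at most two strict drops, forcing $A^{[4]}=0$ with no further pre-Lie identity manipulation required.
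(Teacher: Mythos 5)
Your proposal is correct and follows essentially the same route as the paper: $A^{[2]}\neq 0$ by the dimension count on three generators, $\dim_{\mathbb{F}_p}A/A^{[2]}=3$ forces $\dim_{\mathbb{F}_p}A^{[2]}=2$, and the descending chain terminates using the collapse implications $A^{[i]}=A^{[i+1]}\Rightarrow A^{[i]}=0$ extracted from the proof of Theorem \ref{nil}, exactly as the paper does (and as it already did for the two-generator case in Theorem \ref{nil2}).
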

\begin{proof}
    Clearly $A^{[2]}\neq 0$. Otherwise $\dime_{\mathbb{F}_p}\leq 3$, a contradiction.\\
    Now as $A$ has $3$ generators as a pre-Lie algebra, $A/A^{[2]}$ has dimension $3$ as a $\mathbb{F}_p$ vector space. So $\dime_{\mathbb{F}_p}A^{[2]}=2$. Then as $A^{[3]} \neq A^{[2]}$, we have $\dime_{\mathbb{F}_p}A^{[2]} \leq 1$. If $\dime_{\mathbb{F}_p}A^{[3]}=0$, then $A^{[3]}=0$ and we are done. If $\dime_{\mathbb{F}_p}A^{[3]}=1$, then $A^{[4]}\neq A^{[3]}$ implies $A^{[4]}=0$. 
\end{proof}
\subsection{$A^{[2]}\neq 0$ and $A^{[3]}=0$.} Let us assume $A$ is generated by $x,y,z$ as a pre-Lie algebra. Then $A^{[2]}$ is a $\mathbb{F}_p$ vector space generated by $S':=\{x \cot x,x\cdot y,x\cdot z,y \cdot x,y\cdot y,y\cdot z,z\cdot x,z\cdot y, z \cdot z\}$. Choose any two no-zero linearly independent vectors from $S'$, say $u$ and $v$. Then $\{x,y,z,u,v\}$ forms a basis of the $\mathbb{F}_p$ vector space $A$. Now for $i,j\in \{x,y,z\}$, let \[i\cdot j=\alpha_{ij}u+\beta_{ij}v.\] Note that at least two tuples $(\alpha_{ij},\beta_{ij})$ are not $(0,0)$ and multiplies of each other. So we can summarize the result in the following theorem.
\begin{theo}
    Let $(A,+,\cdot)$ be a pre-Lie algebra generated by $3$ elements $x,y,z$ as a pre-Lie algebra. Suppose $A^{[2]}\neq 0$ and $A^{[3]}=0$. Then we have the following
    \begin{itemize}
        \item[1.] $\{x,y,z,u,v\}$ forms a basis of the $\mathbb{F}_p$ vector space $A$ where $u,v \in A^{[2]}$.
        \item[2.] $A^{[2]}=\mathbb{F}_pu+\mathbb{F}_pv$.
        \item[3.]  For $i,j\in \{x,y,z\}$, let \[i\cdot j=\alpha_{ij}u+\beta_{ij}v.\] where at least two tuples $(\alpha_{ij},\beta_{ij})$ are not $(0,0)$ and multiplies of each other.
    \end{itemize}
\end{theo}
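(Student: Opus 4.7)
The plan is to proceed along exactly the same lines as the analogous two-generator result in Section \ref{s1}, exploiting the fact that $A^{[3]} = 0$ collapses every product of three or more elements to zero and thereby trivialises the pre-Lie relation.

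First I would determine the dimensions. Since $A$ is generated by $x, y, z$ as a pre-Lie algebra, every element of $A$ is an $\mathbb{F}_p$-linear combination of $x, y, z$ and products built from them; as $A^{[3]} = 0$, only products of length one or two survive, so $A = \mathbb{F}_p x + \mathbb{F}_p y + \mathbb{F}_p z + A^{[2]}$ and $A^{[2]}$ is spanned by the nine elements in $S' = \{i \cdot j : i, j \in \{x, y, z\}\}$. Next I would argue that $\dime_{\mathbb{F}_p} A/A^{[2]} = 3$: if one of $x, y, z$ (or a linear combination of them) lay in $A^{[2]}$, then $A$ would be generated as a pre-Lie algebra by fewer than three elements, contradicting the hypothesis. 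Combined with $\dime_{\mathbb{F}_p} A = 5$ this gives $\dime_{\mathbb{F}_p} A^{[2]} = 2$, which is also consistent with Theorem \ref{nil3}.

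Having pinned down the dimension of $A^{[2]}$, I would pick any two linearly independent elements $u, v$ from $S'$ (such a pair exists because $A^{[2]} \neq 0$ together with $\dime_{\mathbb{F}_p} A^{[2]} = 2$ forces $S'$ to contain two linearly independent vectors). Then $A^{[2]} = \mathbb{F}_p u + \mathbb{F}_p v$, and $\{x, y, z, u, v\}$ is a basis of $A$ over $\mathbb{F}_p$. Writing the nine products as $i \cdot j = \alpha_{ij} u + \beta_{ij} v$ defines the multiplication completely. For the final condition, since $u, v$ themselves are chosen from $S'$, there already exist two tuples $(\alpha_{ij}, \beta_{ij})$ which are $(1, 0)$ and $(0, 1)$ respectively (after relabelling), so at least two of the tuples are linearly independent over $\mathbb{F}_p$.

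The only potential obstacle would be verifying that the pre-Lie relation imposes no further constraints on the scalars $\alpha_{ij}, \beta_{ij}$, but this is immediate: the pre-Lie relation $(a \cdot b) \cdot c - a \cdot (b \cdot c) = (b \cdot a) \cdot c - b \cdot (a \cdot c)$ involves only triple products, all of which lie in $A^{[3]} = 0$, so the relation is automatically satisfied for every choice of scalars. Hence the data listed in the theorem suffice to describe such a pre-Lie algebra, completing the argument.
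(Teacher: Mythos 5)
Your proposal is correct and follows essentially the same route as the paper: use the three-generator hypothesis to get $\dime_{\mathbb{F}_p}A/A^{[2]}=3$ and hence $\dime_{\mathbb{F}_p}A^{[2]}=2$, pick two linearly independent products $u,v$ from the nine generators of $A^{[2]}$, and read off the structure constants, with the two chosen products giving the two non-proportional tuples. Your closing remark that the pre-Lie identity imposes no constraints because all triple products lie in $A^{[3]}=0$ is a point the paper leaves implicit, but it does not change the argument.
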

\subsection{$A^{[3]} \neq 0$ and $A^{[4]}=0$.}We already have $\dime_{\mathbb{F}_p}A/A^{[2]}=3$ implying $\dime_{\mathbb{F}_p}A^{[2]}=2$. This implies from Theorem \ref{nil3} $\dime_{\mathbb{F}_p}A^{[3]}=1$.\\
Note all of $S':=\{x \cdot x,x\cdot y,x\cdot z,y \cdot x,y\cdot y,y\cdot z,z\cdot x,z\cdot y, z \cdot z\}$ can not be contained in $A^{[3]}$.
Take a non zero element $u \in S'\setminus A^{[3]}$.\\
Now as $\dime_{\mathbb{F}_p}A^{[3]}=1$, take a non zero element say $v$ from $A^{[3]}$ which will be of the form $c \cdot d$ where $c \in S'\setminus A^{[3]}$ and $d\in \{x,y\}$ or vice-versa. Then $\{x,y,z,u,v\}$ forms a basis of the $\mathbb{F}_p$ vector space $A$. Clearly $A^{[3]}=\mathbb{F}_pv$ and $A^{[2]}/A^{[3]}=\mathbb{F}_pu$.\\
Again for $i,j\in \{x,y,z\}$, let \[i\cdot j=\alpha_{ij}u+\beta_{ij}v.\] Here not all $\alpha_{ij}$ are $0$. Also for $k=u, l\in\{x,y,z\}$ or vice-versa, define \[k \cdot l=\delta_{kl}v.\]Now the relevant pre-Lie relations in $A^{[3]}$ are 
\begin{align*}
    (x \cdot y)\cdot x-x \cdot (y \cdot x)=(y \cdot x)\cdot x-y \cdot x^2, ~&
(y \cdot x)\cdot y-y \cdot (x \cdot y)=(x \cdot y)\cdot y-x \cdot y^2\\
(x \cdot z)\cdot x-x \cdot (z \cdot x)=(z \cdot x)\cdot x-z \cdot x^2,~&
(z \cdot x)\cdot z-z \cdot (x \cdot z)=(x \cdot z)\cdot z-x \cdot z^2.\\
(y \cdot z)\cdot y-y \cdot (z \cdot y)=(z \cdot y)\cdot y-z \cdot y^2,~&
(z \cdot y)\cdot z-z \cdot (y \cdot z)=(y \cdot z)\cdot z-y \cdot z^2,\\
(x \cdot y)\cdot z-x \cdot (y \cdot z)=(y \cdot x)\cdot z-y \cdot (x\cdot z),~&
(x \cdot z)\cdot y-x \cdot (z \cdot y)=(z \cdot x)\cdot y-z \cdot (x\cdot y).\\
(y \cdot z)\cdot x-y \cdot (z \cdot x)&=(z \cdot y)\cdot x-z \cdot (y \cdot x).
\end{align*}
So we get the following relations among the scalars
\begin{align*}
    \alpha_{xy}\delta_{ux}-\alpha_{yx}\delta{xu}=\alpha_{yx}\delta_{ux}-\alpha_{xx}\delta_{yu},~&
    \alpha_{yx}\delta_{uy}-\alpha_{xy}\delta{yu}=\alpha_{xy}\delta_{uy}-\alpha_{yy}\delta_{xu}.\\
    \alpha_{xz}\delta_{ux}-\alpha_{zx}\delta{xu}=\alpha_{zx}\delta_{ux}-\alpha_{xx}\delta_{zu},~&
    \alpha_{zx}\delta_{uz}-\alpha_{xz}\delta{zu}=\alpha_{xz}\delta_{uz}-\alpha_{zz}\delta_{xu}.\\
    \alpha_{yz}\delta_{uy}-\alpha_{zy}\delta{yu}=\alpha_{zy}\delta_{uy}-\alpha_{yy}\delta_{zu},~&
    \alpha_{zy}\delta_{uz}-\alpha_{yz}\delta{zu}=\alpha_{yz}\delta_{uz}-\alpha_{zz}\delta_{yu}.\\
    \alpha_{xy}\delta_{uz}-\alpha_{yz}\delta{xu}=\alpha_{yx}\delta_{uz}-\alpha_{xz}\delta_{yu},~&
    \alpha_{yz}\delta_{ux}-\alpha_{zx}\delta{yu}=\alpha_{zy}\delta_{ux}-\alpha_{yx}\delta_{zu}.\\
    \alpha_{xz}\delta_{uy}-\alpha_{zy}\delta{xu}&=\alpha_{zx}\delta_{uy}-\alpha_{xy}\delta_{zu}.
\end{align*}
So we summarize these results in the following theorem.
\begin{theo}
    Let $A$ be a pre-Lie algebra generated by $x,y,z$. Suppose $A^{[3]}\neq 0$ and $A^{[4]}=0$. Then we have the following
    \begin{itemize}
        \item[1.] $\{x,y,z,u,v\}$ forms a basis of the $\mathbb{F}_p$ vector space $A$ where $u \in S'\setminus A^{[3]}$ and $v$ is of the form $c \cdot d$ where $c\in S'\setminus A^{[3]}, d \in \{x,y,z\}$ or vice-versa where $S':=\{x \cdot x,x\cdot y,x\cdot z,y \cdot x,y\cdot y,y\cdot z,z\cdot x,z\cdot y, z \cdot z\}$. 
        \item[2.] $A^{[2]}/A^{[3]}=\mathbb{F}_pu, A^{[3]}=\mathbb{F}_pv$.
        \item[3.]  for $i,j\in \{x,y,z\}$, let \[i\cdot j=\alpha_{ij}u+\beta_{ij}v.\] Here not all $\alpha_{ij}$ are $0$. Also for $k=u, l\in\{x,y,z\}$ or vice-versa, define \[k \cdot l=\delta_{kl}v.\] Then we have the following
        \begin{align*}
             \alpha_{xy}\delta_{ux}-\alpha_{yx}\delta{xu}=\alpha_{yx}\delta_{ux}-\alpha_{xx}\delta_{yu},~&
    \alpha_{yx}\delta_{uy}-\alpha_{xy}\delta{yu}=\alpha_{xy}\delta_{uy}-\alpha_{yy}\delta_{xu}.\\
    \alpha_{xz}\delta_{ux}-\alpha_{zx}\delta{xu}=\alpha_{zx}\delta_{ux}-\alpha_{xx}\delta_{zu},~&
    \alpha_{zx}\delta_{uz}-\alpha_{xz}\delta{zu}=\alpha_{xz}\delta_{uz}-\alpha_{zz}\delta_{xu}.\\
    \alpha_{yz}\delta_{uy}-\alpha_{zy}\delta{yu}=\alpha_{zy}\delta_{uy}-\alpha_{yy}\delta_{zu},~&
    \alpha_{zy}\delta_{uz}-\alpha_{yz}\delta{zu}=\alpha_{yz}\delta_{uz}-\alpha_{zz}\delta_{yu}.\\
    \alpha_{xy}\delta_{uz}-\alpha_{yz}\delta{xu}=\alpha_{yx}\delta_{uz}-\alpha_{xz}\delta_{yu},~&
    \alpha_{yz}\delta_{ux}-\alpha_{zx}\delta{yu}=\alpha_{zy}\delta_{ux}-\alpha_{yx}\delta_{zu}.\\
    \alpha_{xz}\delta_{uy}-\alpha_{zy}\delta{xu}&=\alpha_{zx}\delta_{uy}-\alpha_{xy}\delta_{zu}.
        \end{align*}
        \item[4.] If $A$ satisfies $1-3$ then it is a well defined pre-Lie algbera. 
    \end{itemize}
\end{theo}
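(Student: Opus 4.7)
The plan is to adapt the argument used in the preceding two-generator case with $A^{[3]}\neq 0$, $A^{[4]}=0$ and $\dime_{\mathbb{F}_p} A^{[3]}=1$ to the present three-generator setting, exploiting the fact that $A^{[4]}=0$ kills every product of length four or more and therefore collapses most pre-Lie identities.

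First, I would pin down the dimensions. Because $A$ is generated by $x,y,z$ as a pre-Lie algebra, every element is an $\mathbb{F}_p$-linear combination of pre-Lie words in $x,y,z$, and every word of length $\geq 2$ lies in $A^{[2]}$; hence $\overline{x},\overline{y},\overline{z}$ span $A/A^{[2]}$, and since $x,y,z$ are generators they are linearly independent modulo $A^{[2]}$, giving $\dime_{\mathbb{F}_p} A/A^{[2]}=3$. This forces $\dime_{\mathbb{F}_p}A^{[2]}=2$, and combined with $A^{[3]}\neq 0$ and Theorem \ref{nil3} one concludes $\dime_{\mathbb{F}_p} A^{[3]}=1$.

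Next, I would extract the basis. The set $S'=\{x\cdot x,x\cdot y,\ldots,z\cdot z\}$ spans $A^{[2]}$; since $A^{[2]}\neq A^{[3]}$, at least one member of $S'$ lies outside $A^{[3]}$, and I pick such a $u$. Because $A^{[3]}=A\cdot A^{[2]}+A^{[2]}\cdot A$ is one-dimensional and non-zero, a generator $v$ of $A^{[3]}$ can be chosen of the form $c\cdot d$ or $d\cdot c$ with $c\in S'\setminus A^{[3]}$, $d\in\{x,y,z\}$: any product with an $S'$-factor already in $A^{[3]}$ would lie in $A\cdot A^{[3]}\subset A^{[4]}=0$ and so contribute nothing. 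The filtration $A\supsetneq A^{[2]}\supsetneq A^{[3]}\supsetneq 0$ then gives linear independence of $\{x,y,z,u,v\}$, producing an $\mathbb{F}_p$-basis with $A^{[2]}/A^{[3]}=\mathbb{F}_p u$ and $A^{[3]}=\mathbb{F}_p v$.

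The third step is to produce the scalar identities. Write $i\cdot j=\alpha_{ij}u+\beta_{ij}v$ for $i,j\in\{x,y,z\}$ and $k\cdot l=\delta_{kl}v$ whenever exactly one of $k,l$ equals $u$; all remaining basis products land in $A^{[4]}=0$, in particular $u\cdot u=0$ and every product with a $v$-factor vanishes. The pre-Lie identity $(a\cdot b)\cdot c-a\cdot(b\cdot c)=(b\cdot a)\cdot c-b\cdot(a\cdot c)$ is then automatic as soon as any of $a,b,c$ lies in $\{u,v\}$, since both sides are sums of products of length $\geq 4$. Only the nine triples $(a,b,c)\in\{x,y,z\}^3$ with $a\neq b$ remain, and substituting the expansions together with $v\cdot i=i\cdot v=0$ yields exactly the nine scalar identities displayed in item 3.

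For the converse in item 4 I would define multiplication on $\mathbb{F}_p\{x,y,z,u,v\}$ by the prescribed formulas, setting $u\cdot u=0$ and all products with a $v$-factor to $0$, and extend bilinearly. Well-definedness reduces to checking the pre-Lie identity on basis triples; by the collapsing observation above, only the nine triples with entries in $\{x,y,z\}$ impose genuine constraints, and these hold by hypothesis. I expect the main obstacle to be the bookkeeping in this last step, namely confirming exhaustively that every triple involving $u$ or $v$ forces both sides into $A^{[4]}$ on the nose, and that the nine displayed scalar relations are a complete — not merely necessary — list, so that no hidden incompatibility can arise from the remaining triples with a repeated argument.
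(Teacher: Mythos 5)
Your proposal is correct and follows essentially the same route as the paper: fix the dimensions via $\dime_{\mathbb{F}_p}A/A^{[2]}=3$ and Theorem \ref{nil3}, choose $u\in S'\setminus A^{[3]}$ and $v$ a product of such a $u$-type element with a generator, and reduce the pre-Lie identity to the nine triples from $\{x,y,z\}$ since every instance involving $u$ or $v$ collapses into $A^{[4]}=0$. Your sketch of item 4 (checking well-definedness on basis triples) is a welcome addition, as the paper asserts that part without argument.
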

\section{pre-Lie algebra $A$ with $(A,+)\cong C_p^5$ and generated by four elements as a pre-Lie ring}
We start this section by the following theorem.
\begin{theo}
    Let $A$ be a pre-Lie algebra generated by $4$ elements. Then $A^{[2]} \neq 0$ and $A^{[3]}=0$.
\end{theo}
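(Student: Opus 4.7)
The plan is to mirror the opening dimension count used in Theorems \ref{nil2} and \ref{nil3}, which pin down $\dime_{\mathbb{F}_p} A/A^{[2]}$ from the number of pre-Lie generators. First I would observe that, since $A$ is generated by four elements, the images of these generators span $A/A^{[2]}$, so $\dime_{\mathbb{F}_p} A/A^{[2]} \leq 4$. Conversely, any $\mathbb{F}_p$-basis of $A/A^{[2]}$ lifts to a pre-Lie generating set of $A$ by the standard argument $A = \langle x_1, \ldots, x_k\rangle + A^{[n]}$ combined with nilpotency. Adopting the same convention as in Theorem \ref{nil3}, that ``generated by four elements'' means four is the minimum such number, this yields $\dime_{\mathbb{F}_p} A/A^{[2]} = 4$. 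Combined with $\dime_{\mathbb{F}_p} A = 5$, it follows that $\dime_{\mathbb{F}_p} A^{[2]} = 1$, and in particular $A^{[2]} \neq 0$.

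Next I would deduce $A^{[3]} = 0$. Recall from the proof of Theorem \ref{nil} that the assumption $A^{[2]} = A^{[3]}$ propagates to $A^{[j]} = A^{[j+1]}$ for every $j \geq 2$, and then nilpotency of $A$ forces the stable value to be zero, i.e.\ $A^{[2]} = 0$. This contradicts the conclusion of the previous paragraph, so $A^{[3]}$ must be a proper $\mathbb{F}_p$-subspace of the one-dimensional space $A^{[2]}$, and therefore $A^{[3]} = 0$.

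The argument thus reduces to two short dimension-count steps, so I do not expect any serious obstacle. The only point that requires a small amount of care is the convention on what ``generated by four elements'' means, and I would state that convention explicitly at the start of the proof; the main pre-Lie input (the ascending-chain argument used to rule out $A^{[2]} = A^{[3]}$) has already been handled inside Theorem \ref{nil} and is simply invoked here.
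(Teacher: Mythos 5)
Your proposal is correct and follows essentially the same route as the paper: a dimension count giving $\dime_{\mathbb{F}_p}A/A^{[2]}=4$, hence $\dime_{\mathbb{F}_p}A^{[2]}=1$, followed by the observation that $A^{[2]}=A^{[3]}$ is impossible by the stabilization argument of Theorem \ref{nil}, so $A^{[3]}$ is a proper subspace of a one-dimensional space and hence zero. Your explicit justification that $A^{[3]}\neq A^{[2]}$ via the ascending-chain argument, and your remark on the minimal-generation convention, merely spell out steps the paper leaves implicit.
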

\begin{proof}
    Note if $A^{[2]}=0$ then $\dime_{\mathbb{F}_p}A \leq 4$. So $A^{[2]} \neq 0$. Also $\dime_{\mathbb{F}_p} A/A^{[2]}=4$ implying $\dime_{\mathbb{F}_p}A^{[2]}=1$. As $A^{[3]}\neq A^{[2]}$ we have $A^{[3]}=0$.
\end{proof}
Here we only have a single case: $A^{[2]} \neq 0$ and $A^{[3]}=0$.\\
Let $A$ be generated by $x,y,z,u$ as a pre-Lie algebra. Then $S":=\{x\cdot x, x \cdot y,x\cdot z,x \cdot u,y\cdot x, y \cdot y,y\cdot z,y \cdot u,z\cdot x, z \cdot y,z\cdot z,z \cdot u,u\cdot x, u \cdot y,u\cdot z,u \cdot u\}$ generates $A^{[2]}$ as a $\mathbb{F}_p$ vector space. Take any non zero elemnt say $v$ from $S''$. Then $\{x,y,z,u,v\}$ forms a basis of $\mathbb{F}_p$ vector space $A$.\\
Also for $i,j\in\{x,y,z,u\}$ let \[i\cdot j=\alpha_{ij}v.\] Note all $\alpha_{ij}$ are not zero as $A^{[2]}\neq 0$. Thus we have
\begin{theo}
    Let $A$ be generated by $x,y,z,u$ as a pre-Lie algebra. Then
    \begin{itemize}
        \item[1.]  $A^{[2]} \neq 0$ and $A^{[3]}=0$.
        \item[2.]  $\{x,y,z,u,v\}$ forms a basis of $\mathbb{F}_p$ vector space $A$ where $u$ is a non zero element from $S'':=\{x\cdot x, x \cdot y,x\cdot z,x \cdot u,y\cdot x, y \cdot y,y\cdot z,y \cdot u,z\cdot x, z \cdot y,z\cdot z,z \cdot u,u\cdot x, u \cdot y,u\cdot z,u \cdot u\}$.
        \item[3.]  For $i,j\in\{x,y,z,u\}$ let \[i\cdot j=\alpha_{ij}v.\] where all $\alpha_{ij}$ are not zero.
    \end{itemize}
\end{theo}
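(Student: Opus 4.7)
The plan is to chain three short steps: a dimension count, a spanning observation, and the automatic vanishing of the pre-Lie identity once $A^{[3]}=0$. Because all the heavy lifting about nilpotency indices has already been done in the preceding theorem, the argument reduces to routine linear algebra plus one structural check.

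First I would dispatch part (1) by invoking the theorem stated immediately above, which already gives $A^{[2]}\neq 0$ and $A^{[3]}=0$. For the reader's convenience I would also briefly re-record the dimension count: $\dim_{\mathbb{F}_p} A=5$; the four generators map to a spanning set of $A/A^{[2]}$, so $\dim_{\mathbb{F}_p} A^{[2]}\le 1$; nonvanishing (otherwise $\dim A\le 4$) forces $\dim_{\mathbb{F}_p} A^{[2]}=1$; and $A^{[3]}=A^{[2]}$ is ruled out by Theorem~\ref{nil} (it would yield $A^{[2]}=0$), leaving $A^{[3]}=0$.

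For part (2), the key observation is that $A^{[2]}=A\cdot A$ is $\mathbb{F}_p$-spanned by the 16 products in $S''$. Since $\dim_{\mathbb{F}_p} A^{[2]}=1$ and $A^{[2]}\neq 0$, at least one element of $S''$ is nonzero, and any such $v$ spans $A^{[2]}$. I would then verify that $\{x,y,z,u,v\}$ is a basis: the four generators remain independent modulo $A^{[2]}$ (they generate $A$ as a pre-Lie algebra, so their images generate and hence form a basis of the $4$-dimensional quotient $A/A^{[2]}$), and $v\neq 0$ in $A^{[2]}$; this gives five independent vectors in a $5$-dimensional space. Part (3) is then immediate: each product $i\cdot j\in A^{[2]}=\mathbb{F}_p v$ writes uniquely as $\alpha_{ij}v$, and not all $\alpha_{ij}$ can vanish since $A^{[2]}\neq 0$ is generated by these products.

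The only point needing any vigilance, and the closest thing to an obstacle, is confirming that the data $(\alpha_{ij})$ automatically defines a pre-Lie structure with no compatibility conditions imposed. This is where $A^{[3]}=0$ does all the work: for any $a,b,c\in A$, both sides of
\[
(a\cdot b)\cdot c-a\cdot(b\cdot c)=(b\cdot a)\cdot c-b\cdot(a\cdot c)
\]
lie in $A^{[3]}=0$, so the identity is trivially satisfied and no relations among the $\alpha_{ij}$ arise. This explains why the statement of the theorem, unlike its analogues in Sections~\ref{s1} and~\ref{s2}, carries no list of scalar equations.
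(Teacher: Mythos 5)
Your proposal matches the paper's treatment: part (1) is exactly the immediately preceding theorem (whose proof is the dimension count you record), and parts (2)--(3) are the same spanning/basis observations, with your closing remark that the pre-Lie identity holds vacuously because $A^{[3]}=0$ being a worthwhile explicit addition to what the paper leaves implicit. One small correction: spanning of $A/A^{[2]}$ by the images of the four generators only gives $\dim_{\mathbb{F}_p}A^{[2]}\ge 1$; the bound $\dim_{\mathbb{F}_p}A^{[2]}\le 1$ requires those images to be linearly \emph{independent} in $A/A^{[2]}$, which follows from minimality of the generating set (as the paper spells out in the two-generator case).
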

\section{Concluding Remarks} In summary, this article makes significant contributions to the examination of braces of order $p^5$. We have successfully provided an explicit description of all strongly nilpotent pre-Lie algebras with a cardinality of $p^5$, consequently detailing all right nilpotent $\mathbb{F}_p$-braces with the same cardinality.
\par It is noteworthy that the same procedure can be employed to construct right nilpotent braces of order $p^5$ that are not necessarily $\mathbb{F}_p$-braces. In such cases, we consider $(A,+)$ to be isomorphic with various groups like $C_{p^4}\times C_p, C_{p^3}\times C_{p^2}, C_{p^3}\times C_p\times C_p, C_{p^2}\times C_{p^2} \times C_p, C_{p^2}\times C_p\times C_p\times C_p$. It's important to note that if $(A,+)$ is isomorphic to $C_{p^5}$, the corresponding brace becomes a cyclic brace, a topic well-explored in Rump's work \cite{r3}. Future research endeavors may focus on the classification of pre-Lie rings whose additive groups are isomorphic to the aforementioned groups.
\section*{Acknowledgement} The research of the author is financially supported by the National Board of Higher Mathematics under grant number 0203/34/2018/R\&D-II/139716. The author acknowledges Professor Manoj Kumar Yadav for initiating the problem, providing supervision throughout the process, and reviewing the solution. Additionally, the author expresses gratitude to the Harish Chandra Research Institute of Prayagraj for their warm hospitality during the months of August 2022 and September 2023.
\section*{Data Availability Statement}
Data sharing does not apply to this article as no datasets were generated or analyzed during the current study.


\begin{thebibliography}{99}
\bibitem{b1} Bachiller, D. Solutions of the Yang-Baxter equation associated to skew
left braces, with applications to racks. J. Knot Theory Ramifications 27, 8
(2018), 1850055, 36.
\bibitem{b2} Bachiller, D., Cedo, F., Jespers, E., and Okninski, J. A family
of irretractable square-free solutions of the Yang-Baxter equation. Forum
Math. 29, 6 (2017), 1291–1306.
\bibitem{br} Brzezinski, T. Trusses: between braces and rings. Trans. Amer. Math.
Soc. 372, 6 (2019), 4149–4176.
\bibitem{c1} Cedo, F., Gateva-Ivanova, T., and Smoktunowicz, A. On the YangBaxter equation and left nilpotent left braces. J. Pure Appl. Algebra 221,
4 (2017), 751–756.
\bibitem{c2} Cedo, F., Jespers, E., and del Rıo, A. Involutive Yang-Baxter groups.
Trans. Amer. Math. Soc. 362, 5 (2010), 2541–2558.
\bibitem{c3} Cedo, F., Jespers, E., and Okninski, J.  Braces and the Yang-Baxter
equation. Comm. Math. Phys. 327, 1 (2014), 101–116.
\bibitem{ch} Chouraqui, F. Garside groups and Yang-Baxter equation. Comm. Algebra 38, 12 (2010), 4441–4460.
\bibitem{di} Dietzel, C. Braces of order $p^2q$. J. Algebra Appl. 20, 8 (2021), Paper No.
2150140, 24.
\bibitem{do} Doikou, A., and Smoktunowicz, A. From braces to hecke algebras \&
quantum groups. arXiv preprint arXiv:1912.03091 (2019).
\bibitem{di}  Drinfeld V.G., On some unsolved problems in quantum group theory, in : Quantum Groups, Leningrad, 1990, in: Lecture Notes in Math., vol. 1510, Springer-Verlag, Berlin, 1992, pp. 1-8.
    \bibitem{e1}  Etingof P., Geometric crystals and set-theoretical solutions to the quantum Yang–Baxter equation, Comm. Algebra 31 (2003) 1961–1973.
    \bibitem{e2} Etingof P., Schedler P., Soloviev A., Set-theoretical solutions to the quantum Yang–Baxter equation, Duke Math.
J. 100 (1999) 169–209.
\bibitem{g1}  Gateva-Ivanova T., A combinatorial approach to the set-theoretic solutions of the Yang–Baxter equation, J. Math.
Phys. 45 (2004) 3828–3858.
\bibitem{g2}  Gateva-Ivanova T., M. Van den Bergh, Semigroups of I-type, J. Algebra 206 (1998) 97–112.
\bibitem{g3} Gateva-Ivanova, T. A combinatorial approach to the set-theoretic solutions of the Yang-Baxter equation. J. Math. Phys. 45, 10 (2004), 3828–
3858.
\bibitem{g4}  Gateva-Ivanova, T., and Cameron, P. Multipermutation solutions of
the Yang-Baxter equation. Comm. Math. Phys. 309, 3 (2012), 583–621.

\bibitem{gu} Guarnieri, L., and Vendramin, L. Skew braces and the Yang-Baxter
equation. Math. Comp. 86, 307 (2017), 2519–2534.
\bibitem{ma}Dominique Manchon, A short survey on pre-Lie algebras, Noncommutative Geometry and Physics: Renormalisation, Motives, Index Theory, (2011), 89–102.
\bibitem{k1}   Nejabati K Zenouz. On hopf-galois structures and skew braces of order $p^3$.
\bibitem{k2}  Puljic D., Smoktunowicz A., and  Nejabati K Zenouz, ´ Some
braces of cardinality $p^4$ and related Hopf-Galois extensions. New York J.
Math. 28 (2022), 494–522.
\bibitem{dora} Puljic, D. Right nilpotency of braces of cardinality $p^4$, 2021, https://arxiv.org/abs/2112.15041.
\bibitem{pp}  Puljic D., Classification of braces of cardinality $p^4$, 2022, https://arxiv.org/abs/2211.04235.
\bibitem{r1} Rump, W. Braces, radical rings, and the quantum Yang-Baxter equation.
J. Algebra 307, 1 (2007), 153–170.
\bibitem{r2} Rump, W. The brace of a classical group. Note Mat. 34, 1 (2014), 115–144.
\bibitem{r3} Rump, W. Classification of cyclic braces. J. Pure Appl. Algebra 209, 3
(2007), 671–685.
\bibitem{sh} Shalev, A., and Smoktunowicz, A. From braces to pre-lie rings, 2022, https://arxiv.org/abs/2207.03158.
\bibitem{so} Smoktunowicz, A., and Vendramin, L. On skew braces (with an
appendix by N. Byott and L. Vendramin). J. Comb. Algebra 2, 1 (2018),
47–86.
\bibitem{so2}Smoktunowicz, A. A new formula for Lazard’s correspondence for finite
braces and pre-Lie algebras. J. Algebra 594 (2022), 202–229.
\bibitem{so3} Smoktunowicz, A. On the passage from finite braces to pre-Lie rings.
Adv. Math. 409 (2022), Paper No. 108683, 33.
\bibitem{so4}Smoktunowicz, A. From pre-lie rings back to braces, 2022, https://arxiv.org/abs/2208.02535.
\bibitem{so5} Smoktunowicz, A. On Engel groups, nilpotent groups, rings, braces
and the Yang-Baxter equation. Trans. Amer. Math. Soc. 370, 9 (2018),
6535–6564.
\bibitem{sy} Sysak, Y. P. The adjoint group of radical rings and related questions. In
Ischia group theory 2010. World Sci. Publ., Hackensack, NJ, 2012, pp. 344–
365.
\end{thebibliography}
\end{document}